\theoremstyle{plain}
\newtheorem*{intro-proposition}{Proposition}
\newtheorem{theorem}{Theorem}[section]
\newtheorem{corollary}[theorem]{Corollary}
\newtheorem{lemma}[theorem]{Lemma}
\newtheorem{proposition}[theorem]{Proposition}
\theoremstyle{definition}
\newtheorem{definition}[theorem]{Definition}
\newtheorem{notation}[theorem]{Notation}
\theoremstyle{remark}
\newtheorem{remark}[theorem]{Remark}
\newcommand{\td}[1]{\tilde{#1}}
\newcommand{\Z}{\mathbb{Z}}
\newcommand{\R}{\mathbb{R}}
\newcommand{\bd}{\partial}
\newcommand{\ms}[1]{\mathscr{#1}}
\newcommand{\dlim}{\varinjlim}
\newcommand{\Hom}{\text{Hom}}
\newcommand{\codim}{\text{codim}}
\renewcommand{\i}{\mathfrak i}
\newcommand{\sW}{\sigma^*_\mathrm{Witt}}
\newcommand{\U}{{\cal U}}
\newcommand{\V}{{\cal V}}
\newcommand{\cC}{{\cal C}}
\newcommand{\cD}{{\cal D}}
\newcommand{\St}{{\mathrm{St}}}
\newcommand{\bSt}{\overline{\mathrm{St}}}
\newcommand{\Lk}{{\mathrm{Lk}}}
\newcommand\red[1]{\textcolor{black}{#1}}                         %
\begin{document}

\title{The symmetric signature of a Witt space}
\author{Greg Friedman\thanks{The first author would like to thank the Midwest
Topology Network for travel support. This work was partially supported by a grant 
from the Simons Foundation (\#209127 to Greg Friedman)} \
and James McClure\thanks{The second author was partially supported by NSF. He
thanks the Lord for making his work possible.}
}
\date{March 14, 2013}

\maketitle

\begin{abstract}
Witt spaces are pseudomanifolds for which the middle-perversity intersection
homology with rational coefficients is self-dual.  We give a new
construction of the symmetric signature for Witt spaces which is similar in
spirit to the construction given by Mi\v{s}\v{c}enko for manifolds.  Our
construction
has all of the expected properties, including invariance under stratified
homotopy equivalence.
\end{abstract}

\medskip

\textbf{2000 Mathematics Subject Classification:} Primary: 55N33, 57R67; Secondary: 57N80, 19J25, 19G24

\textbf{Keywords:} intersection homology, pseudomanifold, Witt space, symmetric signature

\tableofcontents

\section{Introduction}

For a compact oriented $m$-manifold $M$ (and more generally for a Poincar\'e
duality space) the symmetric signature $\sigma^*(M)$ is an element of the
symmetric $L$-group $L^m(\pi_1(M))$.
The symmetric signature was introduced by Mi\v{s}\v{c}enko in \cite{Miscenko} 
as a tool for studying the Novikov conjecture, and since then it has become an
important part of surgery theory (see \cite{Ra92}, for example).

The basic ingredient in the construction of $\sigma^*(M)$ is Poincar\'e
duality on the universal cover.  Another situation where Poincar\'e duality 
occurs is the middle perversity intersection homology of a certain class of 
pseudomanifolds, the {\it Witt spaces} (\cite{Si83}), so it is natural to ask 
whether there is a symmetric signature for Witt spaces.  The purpose of this 
paper is to give a positive answer to this question.  The main technical issue
is the fact that the Alexander-Whitney map, which is used in the 
construction of the symmetric signature for manifolds, does not exist for
intersection chains (because the front face and back face of an allowable
simplex need not be allowable).  We deal with this issue by showing that 
a substitute for the Alexander-Whitney map can be built from the geometric
diagonal and the cross product.

There are several other treatments of the symmetric signature for Witt spaces 
in the literature. Cappell, Shaneson, and Weinberger \cite{CSW} give a brief
description of a construction which uses the work of Quinn and Yamasaki
\cite{Q3,Yam}.  Further information is given in \cite[pages 209--210]{Wein}, 
but the complete account has not been published.  Banagl \cite[Section 4]{Ba11} 
uses the Ph.\ D.\ thesis of Thorsten Eppelmann \cite{Epp} to construct an 
$L$-homology fundamental class for a Witt space and then defines the symmetric 
signature to be the image of this class under the assembly map.  However, 
there are gaps in Eppelmann's work (Banagl, Laures and the second author are
currently working on  an improved version of \cite{Epp}, using our work as 
one ingredient).  Finally, an analytic construction of the symmetric 
signature (for smoothly stratified Witt spaces) has been given by Albin, 
Leichtnam, Mazzeo, and Piazza \cite{ALMP-combo}.

Our approach has several advantages.  It is similar in spirit to that of
Mi\v{s}\v{c}enko (and thus answers a question in \cite{ALMP-combo}).  The actual
construction uses only the diagonal map of the pseudomanifold and the cross
product on intersection chains.  The proof that this construction has the
expected properties uses the 
K\"unneth theorem of \cite{GBF20} and some basic (but nontrivial) facts about
intersection chains, which may be of interest in their own right. 
We give a simple proof of stratified homotopy
invariance; this is proved by a rather intricate analytic argument in
\cite{ALMP-combo} and it is not known how to prove it using the approach of
\cite{Ba11}.  We also give a simple proof of the product
formula; to prove this using the approach of \cite{Ba11} one would
need to show that Eppelmann's map $MIP\to L^\bullet$ is a map of ring spectra
up to homotopy.  

Another advantage of our approach is that it doesn't use the local information 
built into the definition of Witt space, so we are able to define the symmetric
signature for more general objects, which we call \emph{global Witt maps} (see Section 
\ref{m1}; an example of a global Witt map is a map from a Witt space to $B\pi$ with
$\pi$ discrete).

Applications of the symmetric signature for Witt spaces have been
given in \cite{We88,We99,Chang04}.  Also, Shmuel Weinberger has pointed out to
us that one can use the symmetric signature for Witt spaces to extend
\cite[Theorem 1.3.2]{Fowler} to Witt spaces.

An argument due to Weinberger (see \cite[Proof of Proposition 11.1]{ALMP-combo})
shows that any two definitions of the symmetric signature for Witt spaces 
must agree rationally if (1) they are bordism invariant and (2) they agree with 
Mi\v{s}\v{c}enko's definition for smooth manifolds.  Thus all of the known 
constructions of the symmetric signature agree rationally; it would be 
interesting to know whether they agree over the integers.

\medskip

Here is an outline of the paper.  In Section \ref{S: background} we review some
background from \cite{GBF25}.   For our construction of the symmetric signature
we need to know that the intersection homology version of Poincar\'e duality
for the universal cover (which is analogous to what Ranicki \cite{Ra02} calls
``universal Poincar\'e duality'') is given by a cap product; in Section \ref{S:
universal cap} we construct the cap product and in Section \ref{S: 
UPD} we give the proof of universal Poincar\'e duality.  In Section
\ref{S: SS} we give the construction of the symmetric signature for $F$-Witt
maps and we prove that it has the expected properties.  Section 6 gives some
technical facts about intersection chains which are needed for the main 
proofs. In particular we show (Proposition \ref{n3}) that an open cover of a
pseudomanifold $X$  gives, up to chain homotopy, a colimit decomposition of 
the intersection chains for any regular covering space $\tilde{X}$.  We also 
show (Proposition \ref{j7}) that for a compact PL pseudomanifold $X$ with
boundary, if
$\tilde{X}$ is a regular covering of $X$ with group $\pi$ and $F$ is a field
then the intersection chain complex of $\tilde{X}$ with coefficients in $F$
and any perversity is, up to chain homotopy, free and finitely generated over 
$F[\pi]$.

\begin{remark}
We will only consider pseudomanifolds whose top strata are oriented.  For
nonsingular manifolds one can extend the symmetric signature to the nonoriented
case by twisting with the orientation character (\cite{Miscenko,Ra92}), but the
analogous procedure does not work (at least not in any obvious way) in our
situation. The difficulty is that we need to know that cap product with the
fundamental class is an isomorphism, and for this we use
an induction over the strata, which is not applicable to 
twisted coefficients defined only on the top stratum.
\end{remark}

\begin{remark}
This paper relies quite heavily on \cite{GBF25}, to the extent that the reader is recommended to have a copy of that paper available. In particular, many of the proofs of theorems concerning universal Poincar\'e duality are quite similar in spirit, and in most of the detail, to the proofs of the corresponding ``non-universal''  theorems in \cite{GBF25}. In such cases where the additional details are transparent, rather than further clutter the current exposition, we simply refer the reader to the corresponding argument in \cite{GBF25}. 
\end{remark}

\paragraph{Acknowledgments.}  We would like to thank Paolo Piazza for
suggesting this project to us.  We would also like to thank Steve Ferry and
(especially) Shmuel Weinberger for helpful conversations and emails.

\section{Conventions and some background}\label{S: background}

We assume the reader to be conversant with intersection homology theory.  
Basic textbook introductions to intersection homology include  \cite{Bo, 
KirWoo, BaIH}, and the original papers \cite{GM1, GM2, Ki} are well worth 
reading.  We recommend \cite{GBF26} for an expository introduction to the 
version of intersection homology considered here and \cite{GBF23} for a more 
technical account. 

\paragraph{Stratified pseudomanifolds and intersection homology.}

 We note here some of our conventions, which sometimes differ from \red{those of} other 
authors. We continue the conventions of \cite{GBF25} and refer the reader 
there for more details. 

Until Section \ref{m1} 
we will work with topological stratified pseudomanifolds $X$, and thereafter
with PL stratified pseudomanifolds.  \emph{Skeleta} 
of $X$ will be denoted $X^i$. By a \emph{stratum}, we will mean a connected 
component of one of the spaces $X^i-X^{i-1}$; a stratum $Z$ is a 
\emph{singular stratum} if $\dim(Z)<\dim(X)$.  $X$ is allowed to have strata 
of codimension one unless noted otherwise.  A {\it perversity} on 
$X$ is a function from the set of strata of $X$ to $\Z$ which takes 
nonsingular strata to 0. This is a much more general definition than that in
\cite{GM1,GM2}; on the rare occasions when we want to refer to perversities as
defined in \cite{GM1,GM2} we will call them ``classical perversities.''

An {\it orientation} of a stratified pseudomanifold is a choice of orientations
for the top strata.

In the literature, there are several non-equivalent definitions of
intersection homology with general perversities.  We use the \red{singular chain version of}
\cite{GBF26, GBF23} (which is equivalent to that in \cite{Sa05}).  In
\cite{GBF26, GBF23} this version of intersection homology (with 
\red{coefficients in a field $F$}) was denoted $I^{\bar p}H_*(X;F_0)$ 
\red{and referred to as ``intersection homology with stratified coefficients''}, but (as in \cite{GBF25}) we
will denote it simply by $I^{\bar p}H_*(X;F)$ \red{and call it ``intersection homology''}.
This version of intersection homology agrees with the 
definition in \cite{GM1,GM2} when $\bar p$ is a classical perversity and $X$ 
has no strata of codimension one.

We let $D\bar p$ denote the complementary perversity 
to $\bar p$, i.e.  $D\bar p(Z)=\codim(Z)-2-\bar p(Z)$, \red{unless $Z$ is a codimension $0$ stratum, in which case $D\bar p(Z)=0$}.

We direct the reader to \cite[Section 4]{GBF25} for intersection cochains and 
for the chain-level versions of intersection (co)homology cup and cap products.

\paragraph{Signs.} 
We include a sign in the Poincar\'e duality isomorphism (see \cite[Section
4.1]{GBF18}).  Except for this we follow the
signs in \cite{Dold}, which means that we use the Koszul convention
everywhere except in the definition of the coboundary on cochains. 
Dold's convention for the differential of a cochain
(see \cite[Remark VI.10.28]{Dold}) is
\[
(\delta\alpha)(x)=-(-1)^{|\alpha|}\alpha(\bd x).
\]
This convention is necessary in order for the evaluation map to be a chain map.

\section{The cap product for covering spaces}\label{S: universal cap}

Let $p:\td X\to X$ be a regular cover with group $\pi$.  For any subset $A$ of
$X$ we write $\td A$ for $p^{-1}(A)$. \red{We assume that $\td X$ is stratified by the preimages of the strata of $X$. Note that $I^{\bar p}C_*(\td X;F)$ possesses a left $F[\pi]$-module structure induced by the geometric action of $\pi$ on $\td X$.} 

\begin{notation}
\label{m4}
\begin{enumerate}
\item
Given a perversity $\bar p$ on $X$, the perversity on $\td X$ which takes a
stratum $S$ to ${\bar p}(p(S))$ will also be denoted by $\bar p$.
\item
We will write
$I_{\bar p}\bar{C}^*(\td X;F)$ for $\Hom_{F[\pi]}( I^{\bar p}C_*(\td X;F), 
F[\pi])$ and 
$I_{\bar p}\bar{H}^*(\td X;F)$ for the cohomology groups of this complex.
\end{enumerate}
\end{notation}

\begin{remark}
\label{l3}
If the covering $p:\td X\to X$ is trivial (i.e., if it is isomorphic to the
projection $\pi\times X\to X$) then $I_{\bar p}\bar{H}^*(\td
X;F)$ is $\Hom_F(I_{\bar p}H_*(X,F),F[\pi])$.
\end{remark}

In this section we define a cap product 
$$I_{\bar q}\bar{H}^i(\td X ;F)\otimes I^{\bar r}H_j( X;F)\to 
I^{\bar p}H_{j-i}(\td X;F)$$
when $D\bar r\geq D\bar p+D\bar q$ and $F$ is a field.

The construction follows the general outline of \cite[Section 4]{GBF25}, so we
begin by constructing a suitable algebraic diagonal map.  
For a left $F[\pi]$-module $M$, let $M^t$ denote the right $F[\pi]$-module
structure on $M$ induced by the standard involution of $F[\pi]$.

Let  $$\td d:I^{\bar r}H_*(X;F)\to  H_*( I^{\bar p}C_*(\td 
X;F)^t\otimes_{F[\pi]} I^{\bar q}C_*(\td X;F))$$ 
be the composition 
\begin{align*}
I^{\bar r}H_*(X;F) &\xleftarrow{\cong} H_*(F\otimes_{F[\pi]} 
I^{\bar r}C_*(\td X;F))\\
&\xrightarrow{1\otimes d} H_*(F\otimes_{F[\pi]} I^{Q_{\bar 
p,\bar q}}C_*(\td X\times \td X;F))\\
&\xleftarrow{\cong}  H_*(F\otimes_{F[\pi]} (I^{\bar p}C_*(\td X;F)\otimes_F 
I^{\bar q}C_*(\td X;F)))\\
&\cong  H_*( I^{\bar p}C_*(\td X;F)^t\otimes_{F[\pi]} I^{\bar 
q}C_*(\td X;F)).
\end{align*}
Here $d$ is the diagonal map given by \cite[Proposition 4.2.1]{GBF25}, and \red{for perversities $\bar p$ and $\bar q$ on $\td X$, the product perversity $Q_{\bar p,\bar q}$ on $X\times X$ is defined  by
\begin{equation*}
 Q_{\bar p,\bar q}(Z\times S)=\begin{cases}
 \bar p(Z)+\bar q(S)+2, &\text{$Z,S$ both singular strata,}\\
\bar p(Z), &\text{$S$ a regular stratum and $Z$ singular,}\\
\bar q(S), &\text{$Z$ a regular stratum and $S$ singular,}\\
0,&\text{$Z,S$ both regular strata.}
 \end{cases}
 \end{equation*}}
The first  isomorphism is given by
Proposition \ref{n3}.3 below. 
The second isomorphism is given by the K\"unneth theorem
\cite[Theorem 3.1]{GBF25} and
Proposition \ref{P: tensor flat} below.  The third isomorphism is elementary.

Suppose now that $\alpha\in I_{\bar q}\bar{H}^*(\td X;F)$ and that $x\in 
I^{\bar r}H_*(X;F)$. We note that $H_*( I^{\bar p}C_*(\td X;F)^t)$ is the same
$F$-vector space as $I^{\bar p}H_*(\td X;F)$, and we
define $\alpha\smallfrown x\in I^{\bar p}H_*(\td X;F)$ by
$$\alpha \smallfrown x=(1\otimes \alpha)\td d(x).$$
Explicitly, if $\td d(x)$ is represented by a cycle $\sum_a 
y_a\otimes z_a$, then  $\alpha \smallfrown x$
is represented by $(-1)^{|\alpha||y_a|}\sum_a y_a\alpha(z_a)\in 
I^{\bar p}C_*(\td X;F)^t$.

If $\pi$ is trivial this construction reduces to  the cap product defined  in 
\cite[Section 4.3]{GBF25}.  

Similarly, when $A$ and $B$ are open subsets of $X$, we can define  the 
relative cap product 
$$I_{\bar q}\bar{H}^i(\td X,\td A;F)\otimes I^{\bar r}H_j( X, A\cup B;F)\to 
I^{\bar p}H_{j-i}(\td X, \td B;F).$$  

In the next section, we will (implicitly) use the fact that \cite[Propositions 
4.16 and 4.19]{GBF25} have analogues for the cap product discussed in this
section.  We leave it to the reader to check that the proofs in \cite{GBF25} go
through in this situation. We will also need an analogue of \cite[Proposition
4.21]{GBF25}, and for this we need to define the cohomology cross product
\[
\times:H^*(M;F)\otimes I_{\bar p}\bar{H}^*(\td X ;F)
\to
I_{\bar p}\bar{H}^*(M\times \td X;F)
\]
in the special case \red{of the product covering $M\times \td X\xrightarrow{\text{id}\times p} M\times X$, where the covering $p:\td X\to X$ is trivial and $M$ is a manifold};
we define it to be the composite
\begin{multline*}
H^*(M;F)\otimes I_{\bar p}\bar{H}^*(\td X;F)
\cong
\Hom_F(H_*(M;F),F)\otimes \Hom_F(I^{\bar p}H_*(X;F),F[\pi])
\\
\to
\Hom_F(H_*(M;F)\otimes I^{\bar p}H_*(X;F),F[\pi])
\cong
\Hom_F(I^{\bar p}H_*(M\times X;F),F[\pi])
\\
\cong
I_{\bar p}\bar{H}^*(M\times \td X;F),
\end{multline*}
using Remark \ref{l3}.

\begin{remark}
\label{l4}
This is an isomorphism when $H_*(M;F)$ is finitely generated.
\end{remark}

\section{Universal Poincar\'e duality}\label{S: UPD}

In this section, we consider ``universal'' Poincar\'e duality---the duality 
for regular coverings of stratified pseudomanifolds. For manifolds, universal 
duality plays an important role in surgery theory and in the definition of 
$L$-theory invariants, such as the symmetric signature; see \cite[Section
4.5]{Ra02} and \cite{Ra92}.

Let $F$ be a field, and 
let $X$ be an $F$-oriented $n$-dimensional stratified pseudomanifold, 
possibly noncompact. \red{Note that even if one is ultimately concerned only with compact stratified pseudomanifolds, consideration of the noncompact case is necessary for the purposes of induction within the arguments that follow.}
Let $p:\td X\to X$ be a regular cover with group $\pi$.
For each compact $K\subset X$, let $\Gamma_K$ be the fundamental 
class of $I^{\bar 0}H_n(X, X-K;F)$ (see \cite[Definition 5.9]{GBF25}) and 
let $\bar p$, $\bar q$ be complementary perversities, i.e. $\bar p(Z)+\bar q(Z)=\codim(Z)-2$ for each singular stratum $Z$.

Let 
$$\ms D:\dlim_{K} I_{\bar p}\bar{H}^i(\td X, \td X-\td K;F)  \to I^{\bar
q}H_{n-i}(\td X;F)$$
be the map obtained by passage to the direct limit from
$$\smallfrown (-1)^{in} \Gamma_K: I_{\bar p}\bar{H}^i(\td X, 
\td X-\td K;F)\to I^{\bar q}H_{n-i}(\td X;F)$$ 
(compare the discussion in \cite{GBF25} that comes before the statement of 
Theorem 6.3, and see \cite[Section 4.1]{GBF18} for the sign).

\begin{theorem}[Universal Poincar\'e duality]\label{T: universal duality}
Let $X$ be an $F$-oriented stratified pseudomanifold, possibly noncompact and 
possibly with codimension one strata, let $p:\td X\to X$ be a regular 
$\pi$-covering of $X$, and let $\bar p,\bar q$ be complementary perversities. 
Then $\ms D$ is an isomorphism.
\end{theorem}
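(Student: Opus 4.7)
The plan is to mimic the proof of the non-universal Poincar\'e duality theorem \cite[Theorem 6.3]{GBF25}, verifying that the extra $F[\pi]$-equivariance in the universal setting is preserved at each step. Since the universal cap product $\mc D$ is built naturally from the diagonal and cross product, and since analogues of \cite[Propositions 4.16 and 4.19]{GBF25} are known to hold in the universal setting (as noted in Section \ref{S: universal cap}), the Mayer--Vietoris bootstrap machinery should carry over with only cosmetic changes.

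First I would handle the case where the covering trivializes, say via an isomorphism $\td U \cong \pi \times U$ for some open $U \subset X$. In this case Remark \ref{l3} together with its relative analogue identifies $I_{\bar p}\bar{H}^*(\td U, \td U - \td K; F)$ with $\Hom_F(I_{\bar p}H^*(U, U-K;F), F[\pi])$, and an inspection of the definition of $\td d$ shows that under this identification $\mc D$ is the non-universal duality map of \cite{GBF25} tensored over $F$ with the identity on $F[\pi]$. Hence $\mc D$ is an isomorphism over any simply connected open $U$---in particular over the distinguished neighborhoods $\R^{n-k} \times cL$ that serve as the local models of the stratification, and over Euclidean neighborhoods of points in the top stratum.

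The inductive step is a Mayer--Vietoris argument. Given opens $U, V \subset X$ such that $\mc D$ is an isomorphism on $U$, $V$, and $U \cap V$ (with the stratifications and restricted coverings they inherit from $X$), I apply the five lemma to the MV sequences for $I^{\bar q}H_*$ and for $\dlim_K I_{\bar p}\bar{H}^*$, which are intertwined by $\mc D$. Combined with the fact that $\mc D$ commutes with filtered colimits of increasing open unions (by formal properties of the direct limits involved), the usual Bredon-style bootstrap assembles the conclusion for $X$ from a cover by simply connected open sets. The induction may be organized on the depth of $X$ and on the stratum-by-stratum filtration of open subsets, exactly as in the proof of \cite[Theorem 6.3]{GBF25}, with the manifold base case handled either classically or directly by the trivialization argument above.

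The main obstacle, I expect, is careful bookkeeping to ensure that the identifications used to define $\td d$ are natural with respect to the open inclusions appearing in the MV sequences and remain $F[\pi]$-equivariant throughout. The definition of $\td d$ passes through the quasi-isomorphism $F \otimes_{F[\pi]} I^{\bar r}C_*(\td X;F) \simeq I^{\bar r}C_*(X;F)$ of Proposition \ref{n3} and through the cross-product quasi-isomorphism of \cite[Theorem 3.1]{GBF25}, and the MV comparison demands that these identifications glue compatibly over intersections, respecting the $\pi$-action. Once this naturality has been verified, the five lemma together with the commutation of filtered colimits and homology completes the proof.
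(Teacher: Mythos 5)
Your general strategy --- reduce to trivialized neighborhoods, then bootstrap with Mayer--Vietoris and an induction over depth --- is exactly the route the paper follows. But the proposal as written has two genuine gaps.

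First, you claim that for any simply connected open $U$ the universal map $\ms D$ over $\td U \cong \pi\times U$ is simply the non-universal duality map of \cite{GBF25} tensored with the identity on $F[\pi]$, and is therefore automatically an isomorphism. This reduction requires the canonical map
\[
\Hom_F\bigl(I^{\bar p}H_*(U, U-K;F), F\bigr)\otimes_F F[\pi]
\;\longrightarrow\;
\Hom_F\bigl(I^{\bar p}H_*(U, U-K;F), F[\pi]\bigr)
\]
(or at least its direct limit over $K$) to be an isomorphism, and for infinite $\pi$ this fails unless the relevant homology groups are finitely generated. So ``any simply connected open'' is too strong, and the finiteness has to be traced through the induction. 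This is precisely why the paper's analogue of \cite[Lemma 6.6]{GBF25} (Lemma \ref{l6}) carries the explicit hypothesis that $H_*(M,M-C;F)$ be finitely generated for a cofinal family of compacts, and why the proof then observes that the $M$'s appearing in the Zorn/MV argument are open subsets of Euclidean space (hence PL manifolds, where this finiteness follows from Poincar\'e--Lefschetz duality). The paper also states Remark \ref{l4} to flag where this finiteness is used. Your proposal does not address where the needed finiteness comes from, and without it the trivial-covering base case does not go through as claimed.

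Second, the Mayer--Vietoris bootstrap needs an MV sequence for $I_{\bar p}\bar H^*$, and this does not exist for free: applying $\Hom_{F[\pi]}(-,F[\pi])$ to a short exact sequence of $F[\pi]$-chain complexes need not yield a short exact sequence. The paper supplies this by showing that the inclusion $I^{\bar p}C_*(\td A;F)\hookrightarrow I^{\bar p}C_*(\td B;F)$ for open $A\subset B$ is \emph{split} as a map of $F[\pi]$-modules, which follows from the construction in the proof of \cite[Proposition 2.9]{GBF10}. You describe the MV step as carrying over with ``only cosmetic changes,'' but establishing this splitting is a substantive point that you would need to verify before the five-lemma argument can run. (A smaller omission, in the same spirit: the analogue of \cite[Lemma 6.9]{GBF25} needs the map $\lambda$ to be an isomorphism, which the paper deduces from Proposition \ref{n3}.2.)

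In short, the architecture of your argument matches the paper's, but the two technical ingredients above are exactly where the universal case differs from the non-universal one, and they cannot be waved through as cosmetic.
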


The proof will occupy the remainder of this section.

The proof follows the same outline as the proof of \cite[Theorem 6.3]{GBF25}.
First we need the following analogue of \cite[Lemma 6.4]{GBF25}.

\begin{lemma}
\label{l5}
Let $L$ be a compact $k-1$ dimensional stratified pseudomanifold.
If the conclusion of Theorem \ref{T: universal duality} holds for $L$ with the
trivial covering map $\pi\times L\to L$ then it also holds for $cL$ with the
trivial covering map $\pi\times cL\to cL$.
\end{lemma}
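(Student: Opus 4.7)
The plan is to reduce universal duality for the trivial covering of $cL$ to non-universal duality for $cL$, which then follows by applying the same reduction in reverse to $L$ together with \cite[Lemma 6.4]{GBF25}.

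For every open $U \subseteq cL$, triviality of the covering gives an isomorphism of $F[\pi]$-chain complexes
\[
I^{\bar p}C_*(\pi \times cL,\, \pi \times U;\, F) \;\cong\; F[\pi] \otimes_F I^{\bar p}C_*(cL, U; F),
\]
with $F[\pi]$ acting by left multiplication on itself. Applying $\Hom_{F[\pi]}(-, F[\pi])$ and the tensor-hom adjunction gives
\[
I_{\bar p}\bar C^*(\pi \times cL, \pi \times U; F) \;\cong\; \Hom_F(I^{\bar p}C_*(cL, U; F), F[\pi]),
\]
and since $F$ is a field this descends on cohomology to
\[
I_{\bar p}\bar H^i(\pi \times cL, \pi \times U; F) \;\cong\; \Hom_F(I^{\bar p}H_i(cL, U; F), F[\pi]),
\]
which is the relative form of Remark \ref{l3}. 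Analogously $I^{\bar q}H_{n-i}(\pi \times cL; F) \cong F[\pi] \otimes_F I^{\bar q}H_{n-i}(cL; F)$.

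Next I would verify that under these identifications $\ms D$ becomes the non-universal duality map $\ms D_{cL}$ with coefficients extended from $F$ to $F[\pi]$. This is essentially bookkeeping: in the trivial covering the chain-level diagonal on $\pi \times cL$ is just $\mathrm{id}_\pi$ crossed with the diagonal on $cL$, and $\Gamma_K$ already lives on $cL$, so $\td d(\Gamma_K)$ is represented by the image of $\Gamma_K$ under the non-universal diagonal sitting inside $F[\pi] \otimes_F \bigl(I^{\bar p}C_*(cL; F) \otimes_F I^{\bar q}C_*(cL; F)\bigr)$, and capping with $\phi$ then applies $\phi$ to one tensor factor. Using that each $I^{\bar p}H_i(cL, cL - K; F)$ is finite-dimensional over $F$ and that direct limits commute with tensor products, one obtains
\[
\dlim_K \Hom_F(I^{\bar p}H_i(cL, cL - K; F), F[\pi]) \;\cong\; F[\pi] \otimes_F \dlim_K I_{\bar p}H^i(cL, cL - K; F),
\]
and under this identification $\ms D$ becomes $\mathrm{id}_{F[\pi]} \otimes \ms D_{cL}$ (up to the standard sign).

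Finally, applying the same identifications to $L$, the hypothesis is equivalent, by the faithful flatness of $F[\pi]$ over $F$, to $\ms D_L$ being an isomorphism in the non-universal sense. By \cite[Lemma 6.4]{GBF25} this yields that $\ms D_{cL}$ is an isomorphism, and tensoring back with $F[\pi]$ gives the universal statement for $\pi \times cL \to cL$. The hardest step is the middle one: verifying that under the Remark \ref{l3}-type identifications the universal cap product really coincides with the base change of the non-universal one. This requires a careful unwinding of the definition of $\td d$ in Section \ref{S: universal cap}, with attention to signs and to the finite-dimensionality input needed to push the identifications through the direct limit.
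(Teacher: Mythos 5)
Your proof is correct and takes a genuinely different route from the paper's. The paper simply asserts that the argument of \cite[Lemma 6.4]{GBF25} goes through verbatim with Remark \ref{l3} replacing \cite[Remark 4.9]{GBF25}; that is, it redoes the cone-formula computation in the universal setting, with the Remark \ref{l3} identification standing in for the non-universal one wherever the latter is invoked. You instead prove a base-change principle: for the trivial covering $\pi\times cL\to cL$, the Remark \ref{l3} identification together with finite generation over $F$ of the relevant intersection homology groups (which is what lets one pass $F[\pi]\otimes_F$ through $\Hom_F(-,F)$ and through the direct limit over a cofinal family of compacta) exhibits the universal map $\ms D$ as $\mathrm{id}_{F[\pi]}\otimes\ms D_{cL}$, so that by faithful flatness of $F[\pi]$ over $F$ the universal statement for $\pi\times cL$ is equivalent to the non-universal one for $cL$, and likewise for $L$; one can then cite \cite[Lemma 6.4]{GBF25} as a black box. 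The step you flag as hardest---that the universal cap product is the $F[\pi]$-base-change of the non-universal one---is indeed the real content, and it is also what the paper's ``same proof'' claim leaves implicit; a direct unwinding of $\td d$ for a trivial covering shows $\td d(\Gamma_K)=1\otimes\bar d(\Gamma_K)$ under the canonical identifications, from which the base-change formula follows. A small advantage of your route is that the same base-change observation can be reused immediately in the proof of Lemma \ref{l6}, which also concerns a trivial covering.
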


The proof of this lemma is the same as that of \cite[Lemma 6.4]{GBF25}, except
that Remark \ref{l3} above should be used in place of \cite[Remark 4.9]{GBF25}.

Next we need the following analogue of \cite[Lemma 6.6]{GBF25}.

\begin{lemma}
\label{l6}
Suppose that the conclusion of Theorem \ref{T: universal duality} holds for 
the compact $F$-oriented stratified $k-1$  pseudomanifold $L$ with the 
trivial covering map $\pi\times L\to L$.  Let 
$M$ be an $F$-oriented unstratified $n-k$ manifold, and assume that
$H_*(M,M-C;F)$ is finitely generated for a cofinal collection of compact
subsets $C$.
Give $M\times cL$ the 
product stratification and the product orientation.
Then the conclusion of Theorem \ref{T: universal duality} holds for
$M\times cL$ with the trivial covering map $\pi\times M\times cL\to M\times
cL$.
\end{lemma}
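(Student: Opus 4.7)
The plan is to adapt the proof of \cite[Lemma 6.6]{GBF25} to the universal setting, replacing each use of the classical cap and cross products by the universal analogues constructed in Section \ref{S: universal cap}, and using Remark \ref{l4} in place of the classical cohomological K\"unneth isomorphism. Since the covering is trivial, Remark \ref{l3} identifies $I_{\bar p}\bar H^*(\pi\times Y;F)$ with $\Hom_F(I_{\bar p}H_*(Y;F),F[\pi])$ for any $Y$ arising in the argument, so the $F[\pi]$-bookkeeping sits entirely on the coefficient side and can largely be carried through formally.

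First I would restrict to the cofinal family of compact subsets of $M\times cL$ of the product form $K=C\times \bar c_sL$, where $C\subset M$ is compact with $H_*(M,M-C;F)$ finitely generated (available cofinally by hypothesis) and $\bar c_sL$ is the closed subcone of height $s<1$, with $s\to 1$. A standard commutation-of-colimits argument, as in \cite{GBF25}, reduces $\ms D$ to the cap product with $\Gamma_K$ for each such $K$. Next, using the product decomposition $\Gamma_K=\Gamma_C\times \Gamma_{\bar c_sL}$ of the fundamental class together with the universal analogue of \cite[Proposition 4.21]{GBF25}, I would fit the duality map into a commutative square
\[
\begin{array}{ccc}
H^i(M,M-C;F)\otimes I_{\bar p}\bar H^{j}(\pi\times \bar c_sL,\pi\times A;F) & \xrightarrow{\times} & I_{\bar p}\bar H^{i+j}(\pi\times K,\pi\times B;F) \\
\downarrow & & \downarrow \\
H_{n-k-i}(M;F)\otimes I^{\bar q}H_{k-j}(\pi\times \bar c_sL;F) & \xrightarrow{\times} & I^{\bar q}H_{n-i-j}(\pi\times K;F),
\end{array}
\]
in which $A,B$ are the appropriate ``boundary'' subsets, the horizontal arrows are cross products, and the vertical arrows are cap products with $\Gamma_C$, $\Gamma_{\bar c_sL}$, and $\Gamma_K$ respectively. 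The bottom arrow is an isomorphism by the K\"unneth theorem \cite[Theorem 3.1]{GBF25}; the top is an isomorphism by (a relative form of) Remark \ref{l4}, where finite generation of $H_*(M,M-C;F)$ is essential. The left vertical is the tensor of classical manifold Poincar\'e duality for $M$ with the universal duality map for $cL$, which is an isomorphism by the assumption on $L$ together with Lemma \ref{l5}. Hence the right vertical is an isomorphism, and passing to the direct limit in $K$ yields the conclusion.

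The main obstacle is verifying the universal version of the product formula $(\alpha\times \beta)\smallfrown(\gamma\times \delta)=\pm(\alpha\smallfrown\gamma)\times(\beta\smallfrown\delta)$ needed for commutativity of the square above. One must track the $F[\pi]$-equivariance of the geometric diagonal and the K\"unneth chain equivalence entering the definition of $\td d$ in Section \ref{S: universal cap}. Triviality of the covering lightens this bookkeeping substantially: via Remark \ref{l3} the $F[\pi]$-coefficients sit entirely on the cochain side, so after evaluating the cochains against cycles the identity reduces to the non-universal product formula already established in \cite{GBF25}. The remaining sign and indexing verifications follow the pattern of the non-universal argument and can be quoted rather than redone.
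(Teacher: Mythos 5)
Your proposal is correct and takes essentially the same approach as the paper, which simply cites the proof of \cite[Lemma 6.6]{GBF25} with the relative version of Remark \ref{l4} substituted for \cite[Remark 4.20]{GBF25}. You have, in effect, unpacked the details of that proof and correctly identified the two points where the universal setting requires care: the role of the finite-generation hypothesis on $H_*(M,M-C;F)$ in making Remark \ref{l4} applicable, and the need for the universal analogue of \cite[Proposition 4.21]{GBF25} (which the paper sets up in Section \ref{S: universal cap} precisely for this purpose).
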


The proof is the same as that of \cite[Lemma 6.6]{GBF25}, except that the
relative version of Remark
\ref{l4} above should be used in place of the relative version of \cite[Remark
4.20]{GBF25}.

The next part of the proof of \cite[Theorem 6.3]{GBF25} is a Zorn's lemma
argument using an induction over depth.  The analogous argument works in our
situation because of the following observations:

\begin{itemize}
\item
In order to construct the Mayer-Vietoris sequence for $I_{\bar p}\bar{H}^*$
it suffices to know that if $A\subset B$ are open subsets of $X$ then the 
inclusion $I^{\bar p}C_*(\td A;F)\hookrightarrow I^{\bar p}C_*(\td B;F)$ is 
split as a map of $F[\pi]$-modules.  This in turn follows from 
the proof of \cite[Proposition 2.9]{GBF10} (use the construction in that proof
with $X$ taken to be $B$ and the ordered open cover taken to be $(A,B)$).

\item
In the situation where Lemmas \ref{l5} and \ref{l6} are needed, $M\times cL$ is
contained in a distinguished neighborhood, so in particular the restriction of
the covering map $p:\td X\to X$ to a cover of  $M\times cL$ is trivial.

\item
Moreover, the $M$'s that occur in the proof are open subsets of Euclidean
space.   Such an $M$ is a PL manifold, so we can take $C$ in Lemma \ref{l6} 
to be a compact PL subspace and then $H_*(M,M-C;F)$ is finitely generated by
Poincar\'e-Lefschetz duality (\cite[Proposition VIII.7.2]{Dold}).
\end{itemize}

It remains to consider the analogues of \cite[Lemmas 6.8 and 6.9]{GBF25}.
All of the steps in the proof of \cite[Lemma 6.8]{GBF25} go through without 
change in our situation.  For the analogue of \cite[Lemma 6.9]{GBF25}, we need
to know that the map
\begin{multline*}
\lambda:
H_*(F\otimes_{F[\pi]} \dlim_{W\in \cC}\, I^{Q_{\bar p,\bar q}}C_*(\td W\times 
\td W,\td W\times (\td W-\td K\cup \td L)))
\\
\to
H_*(F\otimes_{F[\pi]}I^{Q_{\bar p,\bar q}}C_*(\td Y,\td Y-(\td X\times (\td 
K\cup \td L))))
\end{multline*}
is an isomorphism; this is immediate from Proposition \ref{n3}.2 below.


\subsection{Lefschetz duality}

For the convenience of the reader we recall
the definition of $\bd$-stratified pseudomanifolds from 
\cite[Section 7.1]{GBF25}:

\begin{definition}\label{def boundary}
An $n$-dimensional
\emph{$\bd$-stratified pseudomanifold} is a pair $(X,B)$ together with a
filtration on $X$ such that
\begin{enumerate}
\item $X-B$, with the induced filtration, is an $n$-dimensional stratified
pseudomanifold,
\item $B$, with the induced filtration, is an $n-1$ dimensional stratified
pseudomanifold,
\item\label{I: collar} $B$ has an {\it open collar neighborhood} in $X$, that is, a
neighborhood $N$ with a homeomorphism of filtered spaces $N\to
B\times [0,1)$ (where $[0,1)$ is given the trivial filtration) that takes
$B$ to $B\times \{0\}$.
\end{enumerate}
$B$ is called the 
\emph{boundary} of $X$ and denoted $\bd X$.

We will often abuse notation by referring to the ``$\bd$-stratified 
pseudomanifold $X$,'' leaving $B$ tacit.
\end{definition}

This definition includes some non-obvious subtleties, and we encourage the reader to see \cite[Section 7.1]{GBF25} for examples and further discussion.

Lefschetz duality also generalizes to the universal setting, yielding the following corollary to Theorem \ref{T: universal duality}.


\begin{theorem}[Universal Lefschetz Duality]\label{T: univ lef}
Let $X$ be an $n$-dimensional compact $\bd$-stratified
pseudomanifold  such that $X-\bd X$ is $F$-oriented. Let $p:\td X\to X$ be a
regular $\pi$-covering, and let $\bar p,\bar q$ be complementary 
perversities.  Then the cap product with $\Gamma_X$ 
gives isomorphisms 
\[
I_{\bar p}\bar{H}^i(\td X,p^{-1}(\partial X);F)\to 
I^{\bar q}H_{n-i}(\td X;F)
\]
and
\[
I_{\bar p}\bar{H}^i(\td X;F)\to 
I^{\bar q}H_{n-i}(\td X,p^{-1}(\partial X);F).
\]
\end{theorem}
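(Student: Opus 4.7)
The plan is to deduce both isomorphisms from Universal Poincar\'e duality (Theorem \ref{T: universal duality}) applied to the interior $X^\circ := X - \bd X$, which is a (noncompact) $F$-oriented $n$-dimensional stratified pseudomanifold; the restriction of $p$ to $\td X^\circ := p^{-1}(X^\circ)$ is a regular $\pi$-covering, so Theorem \ref{T: universal duality} provides an isomorphism
$$\ms D:\dlim_K I_{\bar p}\bar H^i(\td X^\circ, \td X^\circ - \td K;F) \xrightarrow{\cong} I^{\bar q}H_{n-i}(\td X^\circ;F),$$
the direct limit being taken over compact $K \subset X^\circ$.

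For the first isomorphism I identify each side of $\ms D$ with the corresponding group in the statement. Using the collar $N \cong \bd X \times [0,1)$, the compact subsets $K_n := X - (\bd X \times [0, 1/n))$ form a cofinal family in $X^\circ$. For each $n$, excision identifies
$$I_{\bar p}\bar H^i(\td X^\circ, \td X^\circ - \td K_n;F) \cong I_{\bar p}\bar H^i(\td X, p^{-1}(\bd X \times [0, 1/n));F),$$
and the $\pi$-equivariant collar retraction gives a compatible system of isomorphisms from each of these groups to $I_{\bar p}\bar H^i(\td X, p^{-1}(\bd X);F)$. The same retraction gives $I^{\bar q}H_{n-i}(\td X^\circ;F) \cong I^{\bar q}H_{n-i}(\td X;F)$. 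Naturality of the cap product (the universal analogue of \cite[Proposition 4.16]{GBF25}) together with the compatibility of $\Gamma_{K_n}$ with $\Gamma_X$ shows that under these identifications $\ms D$ corresponds to cap product with $\Gamma_X$, proving the first isomorphism.

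For the second isomorphism I apply the five-lemma to the ladder whose top row is the long exact sequence of the pair $(\td X, p^{-1}(\bd X))$ in $I_{\bar p}\bar H^*$ and whose bottom row is the long exact sequence in $I^{\bar q}H_*$, with vertical maps given by cap products with $\Gamma_X$ (restricted appropriately) at the absolute and relative positions and by Universal Poincar\'e duality on $\bd X$ at the boundary positions. Here $\bd X$, with the orientation induced by the collar, is a closed $F$-oriented $(n-1)$-dimensional stratified pseudomanifold on which Theorem \ref{T: universal duality} applies to the restricted covering $p^{-1}(\bd X) \to \bd X$. Commutativity of the squares linking the cohomology connecting homomorphism $\delta$ with the homology boundary $\bd$ reduces to the chain-level identity $\bd \Gamma_X = \pm \Gamma_{\bd X}$, which follows from the collar product structure and the construction of $\Gamma_X$ in \cite[Definition 5.9]{GBF25}. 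The first isomorphism proved above and Poincar\'e duality on $\bd X$ supply the two rungs of isomorphisms required by the five-lemma, which then yields the isomorphism $I_{\bar p}\bar H^i(\td X;F) \xrightarrow{\cong} I^{\bar q}H_{n-i}(\td X, p^{-1}(\bd X);F)$.

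The main obstacle will be the chain-level bookkeeping: checking that the collar-induced identifications respect the $F[\pi]$-module structure on $\Hom_{F[\pi]}(-, F[\pi])$ and commute with the cap product $\ms D$, and establishing $\bd \Gamma_X = \pm \Gamma_{\bd X}$ in the universal setting so that the five-lemma diagram commutes. Once these points are settled, both isomorphisms follow formally from Theorem \ref{T: universal duality} and Poincar\'e duality applied to $\bd X$.
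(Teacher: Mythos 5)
Your proposal is correct and follows essentially the same route as the paper's proof: both deduce the first isomorphism from Universal Poincar\'e duality applied to the interior $X-\bd X$, using the collar to identify the direct limit over compacta with $I_{\bar p}\bar H^i(\td X, p^{-1}(\bd X);F)$, and both obtain the second isomorphism by the five lemma on the ladder of long exact sequences, with the first isomorphism and Universal Poincar\'e duality on $\bd X$ as the two outer rungs. The issues you flag as remaining (equivariance of the collar identifications, naturality of the universal cap product, and $\bd\Gamma_X = \pm\Gamma_{\bd X}$) are exactly the points the paper handles by citing \cite[Proposition 4.19]{GBF25} and \cite[Section 7.2]{GBF25}.
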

\begin{proof}
 The proof follows from Theorem \ref{T: universal duality}
just as \cite[Theorem 7.10]{GBF25} follows from \cite[Theorem 6.3]{GBF25}, which itself is an adaptation of the proof in \cite{Ha} that Lefschetz duality follows from Poincar\'e duality.

The fundamental class $\Gamma_X\in I^{\bar 0}H_n(X,\bd X;F)$ is constructed in \cite[Section 7.2]{GBF25} as the image of the fundamental class $\Gamma_{X-N}\in I^{\bar 0}H_n(X-\bd X,N- \bd X;F)$, where $N$ is an open collar of $\bd X$ in $X$, under the isomorphisms $I^{\bar 0}H_n(X-\bd X,N- \bd X;F)\cong I^{\bar 0}H_n(X,N;F)\cong I^{\bar 0}H_n(X,\bd X;F)$ induced by excision and homotopy equivalence. 

Now consider the following commutative diagram
\begin{diagram}
I_{\bar p}\bar H^i(\td X-p^{-1}(\bd X),p^{-1}(N-\bd X);F)&\lTo^{\cong} &I_{\bar p}\bar H^i(\td X,p^{-1}(N);F)\\
\dTo^{(-1)^{in}\cdot\smallfrown \Gamma_{X-N}}&&\dTo^{(-1)^{in}\cdot\smallfrown \Gamma_{X-N}}\\
I^{\bar q}H_{n-i}(\td X-p^{-1}(\bd X);F)&\rTo^{\cong} &I^{\bar q}H_{n-i}(\td X;F).
\end{diagram}
On the left side, we interpret $\Gamma_{X-N}$ as an element of 
$I^{\bar 0}H_n(X-\bd X,N-\bd X;F)$, and on the right we interpret $\Gamma_{X-N}$ as an element of $I^{\bar 0}H_n(X,N;F)$.

The top isomorphism is by excision and stratified homotopy 
equivalence. The bottom isomorphism is also by stratified homotopy 
equivalence. If we take the direct limit of the diagram as $N$ shrinks to 
$\bd X$, then $\dlim I_{\bar p}\bar H^i(\td X,p^{-1}(N);F)\cong I_{\bar p}\bar H^i(\td X,p^{-1}(\bd X);F)$, while the righthand $\Gamma_{X-N}$ becomes $\Gamma_X$ (in 
fact, all maps in the directed system obtained by suitably retracting the collar are 
isomorphisms), so the righthand map becomes the map of the  first claimed isomorphism of the Theorem. On the left, the compact sets $X-N$ are cofinal among the compact sets of $X-\bd X$ so that  the left hand map becomes 
an isomorphism in the limit by Theorem \ref{T: universal duality}. It follows therefore that the right hand map also 
becomes an isomorphism in the limit, proving the theorem in this case.

For the second isomorphism, we utilize the diagram of exact sequences, which commutes up to sign,
\begin{diagram}
&\rTo &I_{\bar p}\bar{H}^i(\td X,p^{-1}(N);F)&\rTo & I_{\bar p}\bar{H}^i(\td X;F)&\rTo &I_{\bar p}\bar{H}^i(p^{-1}(N);F)&\rTo &\\
&&\dTo^{\smallfrown \Gamma_{X-N}}&&\dTo^{\smallfrown \Gamma_{X-N}}&&\dTo^{\smallfrown \bd \Gamma_{X-N}}\\
&\rTo & I^{\bar q}H_{n-i}(\td X;F)&\rTo& I^{\bar q}\bar{H}_{n-i}(\td X,p^{-1}(N);F)&\rTo &I^{\bar q}\bar{H}_{n-i-1}(p^{-1}(N);F)&\rTo &
\end{diagram}
Here $\bd \Gamma_{X-N}\in I^{\bar 0}H_{n-1}(N;F)$; it follows from the result of \cite[Section 7.2]{GBF25} that the image of $\bd \Gamma_{X-N}$ under the isomorphism $I^{\bar 0}H_{n-1}(N;F)\cong I^{\bar 0}H_{n-1}(\bd X;F)$ is the fundamental class of $\bd X$. 
Commutativity of the diagram (including the square not shown) uses the intersection homology versions of the standard properties of the cap product, such as are demonstrated in the non-universal case in \cite[Proposition 4.19]{GBF25}; the proofs in the universal case are completely equivalent. 

Now, as $N$ is a collar of $\bd X$, which we can assume to be sufficiently nice, this diagram is isomorphic via stratified homotopy equivalences to the diagram
\begin{diagram}
&\rTo &I_{\bar p}\bar{H}^i(\td X,p^{-1}(\bd X);F)&\rTo & I_{\bar p}\bar{H}^i(\td X;F)&\rTo &I_{\bar p}\bar{H}^i(p^{-1}(\bd X);F)&\rTo &\\
&&\dTo^{\smallfrown \Gamma_{X}}&&\dTo^{\smallfrown \Gamma_{X}}&&\dTo^{\smallfrown \Gamma_{\bd X}}\\
&\rTo & I^{\bar q}H_{n-i}(\td X;F)&\rTo& I^{\bar q}\bar{H}_{n-i}(\td X,p^{-1}(\bd X);F)&\rTo &I^{\bar q}\bar{H}_{n-i-1}(p^{-1}(\bd X);F)&\rTo &
\end{diagram}

Since we know that the leftmost and rightmost terms in the diagram as drawn are isomorphisms, by Theorem
\ref{T: universal duality} and by the first part of this theorem, it follows from the Five Lemma that the middle map is also an isomorphism. 
\end{proof}

\section{The symmetric signature}\label{S: SS}

In Section \ref{j19}, we review the construction of the symmetric
signature for compact oriented nonsingular manifolds.  In 
Section \ref{j20} we give an equivalent construction which
does not use the Alexander-Whitney map.  In Section \ref{m1} we define the
concept of global $F$-Witt map; an example of an global $F$-Witt map is any map from an
$F$-Witt space to $B\pi$. 
In Section \ref{j10}, we 
construct the symmetric signature for global $F$-Witt maps, and in Section 
\ref{j22} we show that it has the expected properties. 

\subsection{The symmetric signature for manifolds}

\label{j19}

Given a compact oriented manifold $M$ of dimension $m$, a discrete group 
$\pi$, and a map $f:M\to B\pi$, the symmetric signature 
$\sigma^*(f)$ is an element of the symmetric $L$-group $L^m({\mathbb 
Z}[\pi])$.  We begin by recalling the definition of this group 
from \cite[Section 1]{Ra92} (with a variation introduced in \cite{WW}), which requires some preliminary definitions.

Let $R$ be a ring with involution and let $C$ be a chain complex of left 
$R$-modules.  The involution gives a chain complex $C^t$ of right 
$R$-modules. There is a chain map, called the {\it slant product}
\[
\backslash: \Hom_R(C,R)\otimes_{\Z}(C^t\otimes_R C)\to C^t
\]
defined by 
$\alpha\otimes x\otimes y\to (-1)^{|\alpha||x|}x\alpha(y)$
(cf. 
\cite[Section VII.11]{Dold}).

\begin{definition}
\label{n16}
A chain complex $C$ over $R$ is {\it finite} if it is free and finitely 
generated over $R$ in each degree and nonzero only in finitely many degrees.
It is {\it homotopy finite} if it is chain homotopy equivalent over $R$ to a 
finite chain
complex over $R$.
\end{definition}

Let $W$ be the standard $\Z[{\mathbb Z}/2]$-free resolution of $\mathbb Z$.  Let
$\iota\in H_0( W)$ be the generator.

\begin{definition}
\label{j1}
An $n$-dimensional {\it symmetric Poincar\'e complex over $R$} is a pair 
$(C,\phi)$, where $C$ is a homotopy finite chain complex over $R$ 
and $\phi$ is a $\Z/2$-equivariant 
chain map 
\[
\phi: W\to C^t\otimes_R C
\]
which raises degrees by $n$, such that the slant product with $\phi_*(\iota)$ is
an isomorphism
\[
H^*(\Hom_R(C,R))\to H_{n-*} (C^t).
\]
\end{definition}

(Note that $H_*(C^t)=H_*(C)$) as graded abelian groups.)

\begin{definition}
\label{r1}
An $n$-dimensional {\it symmetric Poincar\'e pair over $R$} consists of a chain
map $f:C\to D$, a $\Z/2$-equivariant chain map
\[
\phi: W\to C^t\otimes_R C
\]
which raises degrees by $n-1$,
and a $\Z/2$-equivariant map of graded $R$-modules
\[
\Phi: W\to D^t\otimes_R D
\]
which raises degrees by $n$,
such that

(i) $C$ and $D$ are homotopy finite chain complexes over $R$,

(ii) $\bd\circ \Phi-(-1)^n \Phi\circ \bd$ is the composite
\[
W\xrightarrow{\phi} C^t\otimes_R C
\to
D^t\otimes_R D,
\]
and

(iii)
slant product with $\Phi$ gives an isomorphism
\[
H^*(\Hom_R(C,R))\to H_{n-*} (Cf),
\]
where $Cf$ denotes the mapping cone of $f$.
\end{definition}

\begin{definition}
\label{j+}
\begin{enumerate}
\item
Given symmetric Poincar\'e complexes $(C,\phi)$ and $(C',\phi')$ over $R$, 
the {\it direct sum} $(C,\phi)\oplus (C',\phi')$ is the symmetric Poincar\'e 
complex $(C\oplus C',\psi)$, where $\psi$ is the composite
\[
W\xrightarrow{\mathrm{diag}}
W\oplus W
\xrightarrow{\phi\oplus\phi'}
(C^t\otimes_R C)\oplus ({C'}^t\otimes_R C')
\hookrightarrow
(C\oplus C')^t\otimes_R (C\oplus C').
\]
\item
$(C,\phi)$ and $(C',\phi')$ are {\it bordant} if there is
a symmetric Poincar\'e pair $((D,\Phi),(C,\phi)\oplus (C',-\phi'))$.
\item
$L^n(R)$ is the bordism group of $n$-dimensional symmetric
Poincar\'e complexes (with addition given by direct sum).
\end{enumerate}
\end{definition}

\begin{remark}
The definition of symmetric Poincar\'e complex in \cite[Section 1]{Ra92} 
requires $C$ to be a finite chain 
complex over $R$ and not just homotopy finite.  It's easy to check (using 
the proof of \cite[Lemma 3.4]{WW}) that the $L$ groups in Definition 
\ref{j+}.3 are the same as those in \cite{Ra92}.
\end{remark}

\begin{remark}
\label{j2}
A $\Z/2$-equivariant chain map $W\to C^t\otimes_R C$ that raises degrees by 
$n$ represents an element of $H_n(\Hom_{\Z[\Z/2]}(W,C^t\otimes_R C))$.
If $(C,\phi)$ is a
symmetric Poincar\'e complex and $\psi:W\to C^t\otimes_R C$ represents the 
same homology class as
$\phi$ (i.e., if $\psi$ is $\Z/2$-equivariantly chain homotopic to $\phi$),
then $(C,\psi)$ is a 
symmetric Poincar\'e complex that is homotopy equivalent to $(C,\phi)$ 
(\cite[Definition 1.6(ii)]{Ra92}) and therefore represents the same
element of $L^n(R)$ (by 
\cite[Proposition 1.13]{Ra92}).  
\end{remark}

Now let $f:M\to B\pi$ be a map with $M$ compact oriented of dimension $n$ and
$\pi$ discrete.  
Let $\tilde{M}$ be the induced cover of $M$.
$C_*(\tilde{M})$ is homotopy finite
over $\Z[\pi]$ (for example, by \cite[Corollary 5.3]{West}).
Choose a representative $\xi\in C_n(M)$
for the fundamental class of $M$, and let $\phi_M$ be the composite
\begin{multline*}
W\cong
W\otimes \Z
\xrightarrow{1\otimes \xi}
W\otimes C_*(M)
\cong
\Z
\otimes_{\Z[\pi]}
(W \otimes C_*(\tilde{M}))
\\
\xrightarrow{1\otimes \text{EAW}}
\Z
\otimes_{\Z[\pi]}
(C_*(\tilde{M}) \otimes C_*(\tilde{M}))
\cong
(C_*(\tilde{M}))^t\otimes_{\Z[\pi]} C_*(\tilde{M}),
\end{multline*}
where EAW is the extended Alexander-Whitney 
map (which can be constructed by an acyclic models argument).
The symmetric signature $\sigma^*(f)$ is the class
in $L^n({\Z[\pi]})$ represented by the symmetric
Poincar\'e complex $(C_*(\tilde{M}),\phi_M)$.
This is independent of the choice of
$\xi$ by Remark \ref{j2}.

\begin{remark}
The usual symmetric signature of a connected compact oriented manifold $M$ is
$\sigma^*(f)$ for the map $f:M\to B\pi_1(M)$ which classifies the universal
cover of $M$.
\end{remark}

\subsection{Reformulation}

\label{j20}

In this section we give an equivalent definition of the symmetric signature
that does not use the extended Alexander-Whitney map (see Corollary 
\ref{j4}).  We use the notation of the previous section.

Our first result shows that EAW can be replaced, for our purposes, by the 
diagram
\[
W\otimes C_*(\tilde{M})
\xrightarrow{\varepsilon\otimes 1}
C_*(\tilde{M})
\xrightarrow{d}
C_*(\tilde{M}\times \tilde{M})
\xleftarrow{\times}
C_*(\tilde{M})\otimes C_*(\tilde{M}),
\]
where $\varepsilon$ is the augmentation,  $d$ is induced by the diagonal map, 
\red{and $\times$ is the singular chain cross product, sometimes called the 
shuffle product (see \cite[Exercise 12.26.2]{Dold})}.

\begin{proposition}
\label{j3}
The diagram
\[
\xymatrix{
W\otimes C_*(\tilde{M})
\ar[r]^-{\text{EAW}}
\ar[d]_{\varepsilon\otimes 1}
&
C_*(\tilde{M})\otimes C_*(\tilde{M})
\ar[d]^\times
\\
C_*(\tilde{M})
\ar[r]^-d
&
C_*(\tilde{M}\times \tilde{M})
}
\]
commutes up to $(\Z/2\times\pi)$-equivariant chain homotopy (where
$\pi$ acts diagonally on $C_*(\tilde{M})\otimes C_*(\tilde{M})$ and
$C_*(\tilde{M}\times \tilde{M})$).
\end{proposition}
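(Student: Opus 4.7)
The plan is to apply the method of equivariant acyclic models. Both compositions $\times\circ\text{EAW}$ and $d\circ(\varepsilon\otimes 1)$ are $(\Z/2\times\pi)$-equivariant chain maps from $W\otimes C_*(\tilde{M})$ to $C_*(\tilde{M}\times\tilde{M})$ agreeing in degree zero, which is precisely the input one needs. I would begin by checking this input: $\pi$-equivariance of both compositions is automatic from naturality of the constructions applied to the deck transformations of $\tilde{M}$; the map $d$ is $\Z/2$-equivariant because the image of the geometric diagonal sits pointwise in the swap-fixed locus of $\tilde{M}\times\tilde{M}$; the map $\times$ is $\Z/2$-equivariant because the shuffle product intertwines the Koszul swap on the source with the geometric swap on the target; and $\text{EAW}$ is $\Z/2$-equivariant by its construction. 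On a generator $w_0\otimes v$ with $\varepsilon(w_0)=1$ and $v$ a $0$-simplex, both compositions yield the $0$-simplex $(v,v)\in\tilde{M}\times\tilde{M}$.

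The acyclic models machinery is cleanest to set up at the level of natural transformations of $\Z/2$-equivariant functors from topological spaces to $\Z/2$-chain complexes. Let $F(X)=W\otimes C_*(X)$ and $G(X)=C_*(X\times X)$, with $\Z/2$ acting on $W$ on the source side and by swap of factors on the target side, and take the standard simplices $\Delta^n$ as models. Then $F$ is $\Z[\Z/2]$-free on models because $W$ itself is $\Z[\Z/2]$-free. And $G$ is $\Z/2$-acyclic in positive degrees on models: the geometric retraction $\Delta^n\times\Delta^n\to\Delta^n$ onto the diagonal is $\Z/2$-equivariant (the straight-line homotopy in the convex simplex commutes with the swap), so it induces a $\Z/2$-equivariant chain contraction of $C_*(\Delta^n\times\Delta^n)$ onto $C_*(\Delta^n)$, showing that $G(\Delta^n)$ is $\Z/2$-equivariantly quasi-isomorphic to $\Z$ concentrated in degree zero. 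The standard inductive construction of a chain homotopy in acyclic models then goes through equivariantly: one defines the homotopy on a single representative of each $\Z/2$-orbit of preferred generators of $F$ and extends equivariantly, using the $\Z/2$-acyclicity at each step to produce the bounding chain. This yields a $\Z/2$-equivariant natural chain homotopy $H$ between our two natural transformations; naturality of $H$ with respect to the deck transformations $\tilde{M}\to\tilde{M}$ automatically promotes $H$ to $\pi$-equivariance.

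The main technical step, and the one I expect to require most care, is verifying the equivariant version of the inductive bounding argument. Because $\Z/2$ acts freely on the preferred $\Z[\Z/2]$-basis of each $W_i$ and $\pi$ acts freely on singular simplices of $\tilde{M}$, the combined $\Z/2\times\pi$ action on generators of $W\otimes C_*(\tilde{M})$ is free, so there is no stabilizer subtlety; and the cycle one needs to bound, when evaluated on a generator $e_i\otimes\sigma$, pulls back along $\sigma\colon\Delta^j\to\tilde{M}$ to a cycle in $C_*(\Delta^j\times\Delta^j)$, which is a boundary there by the acyclicity established above. One therefore obtains the required $(\Z/2\times\pi)$-equivariant chain homotopy making the diagram commute up to homotopy.
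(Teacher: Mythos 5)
Your proposal is correct, but it takes a genuinely different route from the paper. You argue via equivariant acyclic models: both composites are $\Z/2$-equivariant natural transformations from $F(X)=W\otimes C_*(X)$ to $G(X)=C_*(X\times X)$ that agree in degree zero, $F$ is $\Z[\Z/2]$-free on the models $\{\Delta^n\}$, and $G(\Delta^n)$ is acyclic; the natural chain homotopy produced this way is $\Z/2$-equivariant by construction and $\pi$-equivariant by naturality. The paper instead first passes to normalized chains (so as to use the explicit formula for EAW from McClure--Smith), introduces an ``extended Eilenberg--Zilber'' map EEZ, factors EAW as $\text{EEZ}\circ(1\otimes d)$, and then reduces, using the fact that $1\otimes\times$ is an equivariant chain homotopy equivalence on normalized chains, to a diagram that commutes on the nose and is checked directly from the formulas. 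Your approach is more conceptual and is manifestly independent of the particular choice of EAW (any two such maps are $\Z/2$-equivariantly naturally homotopic anyway), whereas the paper's calculation is more elementary and explicit. One small inaccuracy: you do not actually need, or quite have, $\Z/2$-acyclicity of $G$ on models. The straight-line retraction onto the diagonal gives an equivariant homotopy equivalence $\Delta^n\times\Delta^n\simeq\Delta^n$ with \emph{trivial} $\Z/2$-action, and $C_*(\Delta^n)$ with trivial action is not equivariantly contractible over $\Z[\Z/2]$; but as your final paragraph correctly observes, because $\Z/2$ acts freely on the preferred basis of $W\otimes C_*(-)$, one only needs to bound a cycle on one representative per orbit and then extend equivariantly, so plain (non-equivariant) acyclicity of $C_*(\Delta^n\times\Delta^n)$ suffices. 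It would be cleaner to drop the $\Z/2$-acyclicity claim entirely and rely solely on the free-action-plus-plain-acyclicity argument.
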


We defer the proof to the end of the section.

The map
\[
C_*(\tilde{M})\otimes C_*(\tilde{M})
\xrightarrow{\times}
C_*(\tilde{M}\times \tilde{M})
\]
is a quasi-isomorphism whose domain and target are free over ${\Z[\pi]}$, hence it is a
chain homotopy equivalence over ${\Z[\pi]}$ 
(see, for example, \cite[Exercise IV.4.2]{HS}), and we obtain a quasi-isomorphism
\[
\Z\otimes_{\Z[\pi]} (C_*(\tilde{M})\otimes C_*(\tilde{M}))
\xrightarrow{1\otimes\times}
\Z\otimes_{\Z[\pi]} C_*(\tilde{M}\times \tilde{M}).
\]
This in turn induces an isomorphism
\begin{multline*}
H_*(\Hom_{\Z[\Z/2]}(W,(C_*(\tilde{M}))^t\otimes_{\Z[\pi]} C_*(\tilde{M})))
\cong
H_*(\Hom_{\Z[\Z/2]}(W,\Z\otimes_{\Z[\pi]} (C_*(\tilde{M})\otimes C_*(\tilde{M}))))
\\
\to
H_*(\Hom_{\Z[\Z/2]}(W,\Z\otimes_{\Z[\pi]} C_*(\tilde{M}\times \tilde{M}))),
\end{multline*}
which we denote by $\Upsilon$.  Let 
\[
c_f\in H_n(\Hom_{\Z[\Z/2]}(W,\Z\otimes_{\Z[\pi]} C_*(\tilde{M}\times \tilde{M})))
\]
be the class represented by the composite
\[
W
\xrightarrow{\varepsilon}
\Z
\xrightarrow{\xi}
C_*(M)
\cong
\Z\otimes_{\Z[\pi]} C_*(\tilde{M})
\xrightarrow{1\otimes d}
\Z\otimes_{\Z[\pi]} C_*(\tilde{M}\times \tilde{M}).
\]

Our next result is an easy consequence of Proposition \ref{j3}.

\begin{proposition}
$\Upsilon$ takes the homology class of $\phi_M$ to $c_f$.
\qed
\end{proposition}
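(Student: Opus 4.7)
The plan is to unwind the definitions of $\phi_M$, $\Upsilon$, and $c_f$, then apply Proposition \ref{j3} to replace the extended Alexander-Whitney map with a composite built from the diagonal and cross product, and finally verify that the resulting chain-level representative agrees with the one defining $c_f$.

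First I would trace $\phi_M$ through the map $1\otimes\times$ that is used to build $\Upsilon$. By construction, the image of the chain representative of $\phi_M$ under the composition appearing in the definition of $\Upsilon$ is
\[
W \xrightarrow{1\otimes\xi} W\otimes C_*(M) \cong \Z\otimes_{\Z[\pi]}(W\otimes C_*(\td M)) \xrightarrow{1\otimes\mathrm{EAW}} \Z\otimes_{\Z[\pi]}(C_*(\td M)\otimes C_*(\td M)) \xrightarrow{1\otimes\times} \Z\otimes_{\Z[\pi]} C_*(\td M\times \td M).
\]
The relevant step is the composite $(1\otimes\times)\circ(1\otimes\mathrm{EAW})$; by Proposition \ref{j3} the maps $\times\circ\mathrm{EAW}$ and $d\circ(\varepsilon\otimes 1)$ are $(\Z/2\times\pi)$-equivariantly chain homotopic on $W\otimes C_*(\td M)$. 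Applying the functor $\Z\otimes_{\Z[\pi]}(-)$ preserves the $\Z/2$-equivariant chain homotopy, so the displayed composite is $\Z/2$-equivariantly chain homotopic to
\[
W \xrightarrow{1\otimes\xi} W\otimes C_*(M) \cong \Z\otimes_{\Z[\pi]}(W\otimes C_*(\td M)) \xrightarrow{1\otimes(\varepsilon\otimes 1)} \Z\otimes_{\Z[\pi]} C_*(\td M) \xrightarrow{1\otimes d} \Z\otimes_{\Z[\pi]} C_*(\td M\times\td M).
\]
Consequently the two composites represent the same element of $H_n(\Hom_{\Z[\Z/2]}(W,\Z\otimes_{\Z[\pi]}C_*(\td M\times\td M)))$.

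Next I would verify by a direct chase of a generator $w\in W$ that this second composite agrees on the nose with the chain-level representative used to define $c_f$. Under the natural isomorphism $W\otimes C_*(M)\cong \Z\otimes_{\Z[\pi]}(W\otimes C_*(\td M))$ (with trivial $\pi$-action on $W$), the element $w\otimes\xi$ corresponds to $1\otimes (w\otimes\td\xi)$ for a chosen lift $\td\xi$ of $\xi$; applying $1\otimes(\varepsilon\otimes 1)$ then gives $\varepsilon(w)\otimes\td\xi$. On the other hand, the composite $W\xrightarrow{\varepsilon}\Z\xrightarrow{\xi}C_*(M)\cong \Z\otimes_{\Z[\pi]}C_*(\td M)$ sends $w\mapsto\varepsilon(w)\xi\cong\varepsilon(w)\otimes\td\xi$, and both are then followed by $1\otimes d$. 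The two chain maps therefore coincide, so $\Upsilon([\phi_M])=c_f$.

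The main bookkeeping issue, and essentially the only potential obstacle, is checking that the $\pi$-equivariance clause in Proposition \ref{j3} is used correctly so that the chain homotopy descends through $\Z\otimes_{\Z[\pi]}(-)$; once that equivariance is in hand the rest of the argument is a direct identification of two chain-level composites, with no auxiliary lemmas required.
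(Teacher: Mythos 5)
Your proposal is correct and takes exactly the approach the paper intends: the paper marks this proposition with a \qed and states only that it is ``an easy consequence of Proposition \ref{j3},'' and your argument spells out precisely that consequence — applying the $(\Z/2\times\pi)$-equivariant homotopy of Proposition \ref{j3} under $\Z\otimes_{\Z[\pi]}(-)$ and then matching the resulting chain-level composite with the one defining $c_f$. The element chase at the end is the right way to confirm the two representatives coincide after the Alexander--Whitney map is removed.
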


Combining this with Remark \ref{j2} gives:

\begin{corollary}
\label{j4}
If $\psi:W\to (C_*\tilde{M})^t\otimes_{\Z[\pi]} C_*(\tilde{M})$
is any $\Z/2$-equivariant chain map whose homology class is $\Upsilon^{-1}(c_f)$
then $(C_*(\tilde{M}),\psi)$ is a representative for $\sigma^*(f)$.
\end{corollary}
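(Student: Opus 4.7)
The plan is to assemble the corollary directly from the preceding Proposition and Remark \ref{j2}; since the statement is labelled a corollary, the content is really a two-step packaging argument and all of the substantive work has already been done.

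First, I would invoke the preceding Proposition, which gives $\Upsilon([\phi_M]) = c_f$. Because $\Upsilon$ is built as a composition of isomorphisms on homology, it is itself an isomorphism, so $[\phi_M] = \Upsilon^{-1}(c_f)$ in $H_n(\Hom_{\Z[\Z/2]}(W,(C_*(\tilde{M}))^t\otimes_{\Z[\pi]} C_*(\tilde{M})))$. Hence any $\Z/2$-equivariant chain map $\psi$ of degree $n$ whose homology class is $\Upsilon^{-1}(c_f)$ represents the same homology class as $\phi_M$ in this group.

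Second, I would use the standard identification that a $\Z/2$-equivariant chain map $W \to C^t\otimes_{\Z[\pi]} C$ raising degrees by $n$ is precisely a degree-$n$ cycle in $\Hom_{\Z[\Z/2]}(W, C^t\otimes_{\Z[\pi]} C)$, and that two such cycles are homologous exactly when they are $\Z/2$-equivariantly chain homotopic. Applying this to the common class $[\phi_M] = [\psi]$ yields a $\Z/2$-equivariant chain homotopy between $\psi$ and $\phi_M$.

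Finally, I would apply Remark \ref{j2} verbatim: since $(C_*(\tilde{M}),\phi_M)$ is the symmetric Poincar\'e complex defining $\sigma^*(f)\in L^n(\Z[\pi])$ and $\psi$ is $\Z/2$-equivariantly chain homotopic to $\phi_M$, the pair $(C_*(\tilde{M}),\psi)$ is a symmetric Poincar\'e complex homotopy equivalent to $(C_*(\tilde{M}),\phi_M)$ in the sense of \cite[Definition 1.6(ii)]{Ra92}, and therefore by \cite[Proposition 1.13]{Ra92} it represents the same $L$-theory class, namely $\sigma^*(f)$. The only real obstacle to this argument has already been overcome upstream, in Proposition \ref{j3} and the preceding Proposition; without Proposition \ref{j3} one would not be able to transport $\phi_M$ across $\Upsilon$ and recognize it as $\Upsilon^{-1}(c_f)$, but given those results the corollary is immediate.
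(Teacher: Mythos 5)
Your proposal is correct and matches the paper's argument exactly: the paper's entire ``proof'' is the single sentence ``Combining this with Remark \ref{j2} gives:'' before the corollary, and your three steps (transport $[\phi_M]$ across $\Upsilon$ via the preceding Proposition, identify homologous cycles in $\Hom_{\Z[\Z/2]}(W,\,\cdot\,)$ with $\Z/2$-equivariantly chain homotopic chain maps, then invoke Remark \ref{j2}) are precisely the unpacking of that sentence.
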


\begin{proof}[Proof of Proposition \ref{j3}]
Let $N_*\subset C_*$ denote the subcomplex of normalized chains 
(\cite[Definition 8.3.6]{WEIB}).
Because $N_*(\tilde{M})$ and $C_*(\tilde{M})$ are free over $\Z[\pi]$, 
the quasi-isomorphism
\[
W\otimes N_*(\tilde{M})
\to
W\otimes C_*(\tilde{M})
\]
is a $\Z/2\times\pi$-equivariant chain homotopy equivalence, so it suffices to
prove commutativity of the diagram in the proposition with $C_*$ replaced by
$N_*$.  Next let $\bar{C}_*$ be the quotient of $C_*$ by the degenerate chains.
Since the map $N_*\to \bar{C}_*$ is an isomorphism (\cite[Lemma
8.3.7]{WEIB}), it suffices to prove commutativity of the diagram in the
proposition with $C_*$ replaced by $\bar{C}_*$ (this is needed for diagram
\eqref{r6} below).

We use the formula for EAW given in 
\cite[Definition 2.10(a) and Remark 2.11(a)]{McS}.
Using this, we define a \red{chain map, natural in $X$ and $Y$,
\[
\text{EEZ}: W\otimes \bar{C}_*(X\times Y)\to \bar{C}_*(X)\otimes \bar{C}_*(Y)
\] }
(here EEZ stands for ``extended Eilenberg-Zilber''),
to be the composite
\[
W\otimes \bar{C}_*(X\times Y)
\xrightarrow{EAW}
\bar{C}_*(X\times Y)\otimes \bar{C}_*(X\times Y)
\xrightarrow{(p_1)_*\otimes (p_2)_*}
\bar{C}_*(X)\otimes \bar{C}_*(Y),
\]
where $p_1$ and $p_2$ are the projections.
EAW factors as
\[
W\otimes \bar{C}_*(X)
\xrightarrow{1\otimes d}
W\otimes \bar{C}_*(X\times X)
\xrightarrow{\text{EEZ}}
\bar{C}_*(X)\otimes \bar{C}_*(X),
\]
so to prove the proposition it suffices to show that the diagram
\[
\xymatrix{
W\otimes \bar{C}_*(\tilde{M} \times \tilde{M})
\ar[r]^{\text{EEZ}}
\ar[rd]_{\varepsilon\otimes 1}
&
\bar{C}_*(\tilde{M})\otimes \bar{C}_*(\tilde{M})
\ar[d]^\times
\\
&
\bar{C}_*(\tilde{M} \times \tilde{M})
}
\]
commutes up to $(\Z/2\times \pi_1 M)$-equivariant chain homotopy.
Since the map
\[
W\otimes \bar{C}_*(\tilde{M})\otimes \bar{C}_*(\tilde{M})
\xrightarrow{1\otimes \times}
W\otimes \bar{C}_*(\tilde{M} \times \tilde{M})
\]
is a $(\Z/2\times \pi_1 M)$-equivariant chain homotopy equivalence, it suffices
to show that the composites 
\[
W\otimes \bar{C}_*(\tilde{M})\otimes \bar{C}_*(\tilde{M})
\xrightarrow{1\otimes \times}
W\otimes \bar{C}_*(\tilde{M} \times \tilde{M})
\xrightarrow{\text{EEZ}}
\bar{C}_*(\tilde{M})\otimes \bar{C}_*(\tilde{M})
\xrightarrow{\times}
\bar{C}_*(\tilde{M} \times \tilde{M})
\]
and
\[
W\otimes \bar{C}_*(\tilde{M})\otimes \bar{C}_*(\tilde{M})
\xrightarrow{1\otimes \times}
W\otimes \bar{C}_*(\tilde{M} \times \tilde{M})
\xrightarrow{\varepsilon\otimes 1}
\bar{C}_*(\tilde{M} \times \tilde{M})
\]
are equal.  
The second composite is equal to 
\[
W\otimes \bar{C}_*(\tilde{M})\otimes \bar{C}_*(\tilde{M})
\xrightarrow{\varepsilon\otimes 1}
\bar{C}_*(\tilde{M})\otimes \bar{C}_*(\tilde{M})
\xrightarrow{\times}
\bar{C}_*(\tilde{M} \times \tilde{M}),
\]
so it suffices to show that
the diagram 
\begin{equation}
\label{r6}
\xymatrix{
W\otimes \bar{C}_*(X)\otimes \bar{C}_*(Y)
\ar[r]^-{1\otimes \times}
\ar[rd]_{\varepsilon\otimes 1}
&
W\otimes \bar{C}_*(X\times Y)
\ar[d]^{\text{EEZ}}
\\
&
\bar{C}_*(X)\otimes \bar{C}_*(Y)
}
\end{equation}
commutes, and this is easily checked from the definitions of EEZ and $\times$.
\end{proof}

\subsection{Global $F$-Witt maps.}

\label{m1}

For the next three sections we use coefficients in a field $F$.

Recall that the upper middle perversity $\bar{n}$ is defined by
\[
\bar{n}(Z)=
\begin{cases}
0, & \text{if $\codim(Z)\leq 1$}, \\
\lceil \frac{\codim(Z)-2}{2}\rceil, & \text{if $\codim(Z)\geq 2$}.
\end{cases}
\]
The lower middle perversity $\bar{m}$ is defined to be $D\bar{n}$.

\begin{definition}
\label{m2}
Let $X$ be a compact oriented 
stratified PL pseudomanifold with no strata of codimension one, and let $\pi$ 
a discrete group.  A map
$f:X\to B\pi$ is a {\it global $F$-Witt map} if
the natural map
\[
I^{\bar m}H_*(\tilde{X};F) 
\to
I^{\bar n}H_*(\tilde{X};F)
\]
(where $\tilde{X}$ is the cover induced by $f$)
is an isomorphism.
\end{definition}

For example, if $X$ is a compact $F$-Witt space (see \cite[Section 5.6.1]{GM2})
then every map $X\to B\pi$ is a global
$F$-Witt map; this is because the sheaf map 
\[
{\mathbf I}^{\bar m}{\mathbf C}_*(X;F)\to
{\mathbf I}^{\bar n}{\mathbf C}_*(X;F)
\]
is a quasi-isomorphism and thus the sheaf map
\[
{\mathbf I}^{\bar m}{\mathbf C}_*(\tilde{X};F)\to
{\mathbf I}^{\bar n}{\mathbf C}_*(\tilde{X};F)
\]
is also a quasi-isomorphism, so that 
\[
I^{\bar m}H_*(\tilde{X};F) 
\to
I^{\bar n}H_*(\tilde{X};F)
\]
is an isomorphism as required.  But notice that the condition in Definition
\ref{m2} refers only to global sections and is much weaker than a
quasi-isomorphism of sheaves.

\begin{remark}
The assumption in Definition \ref{m2} that $X$ has a PL structure
will only be used in 
the proof of Proposition \ref{j7}. 
It seems likely that this 
result does not actually require a PL structure, but the proof would be harder.
\end{remark}

We also need the analogue of Definition \ref{m2} when $X$ has a boundary.

\begin{definition}
Let $X$ be a compact oriented PL pseudomanifold with boundary and $\pi$ a 
discrete group.  A map $f:X\to B\pi$ is an {\it global $F$-$\partial$-Witt map} 
if the natural map
\[
I^{\bar m}H_*(\tilde{X};F) 
\to
I^{\bar n}H_*(\tilde{X};F) 
\]
is an isomorphism.
\end{definition}

\begin{proposition}
\label{m3}
Let $X$ be a compact oriented PL pseudomanifold with boundary and let
$f:X\to B\pi$ be a global $F$-$\partial$-Witt map.  Give $\partial X$ the orientation
inherited from $X$. Then the restriction $f|_{\partial X}$ is a global $F$-Witt map.
\end{proposition}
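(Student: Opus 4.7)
The plan is to apply the Five Lemma to the morphism of long exact sequences of the pair $(\td X, \td{\bd X})$ induced by the natural change-of-perversity map from $\bar m$ to $\bar n$. Here $\td{\bd X} = p^{-1}(\bd X)$ is exactly the cover of $\bd X$ induced by $f|_{\bd X}$. By hypothesis the absolute vertical maps $I^{\bar m}H_k(\td X; F) \to I^{\bar n}H_k(\td X; F)$ are isomorphisms, so it suffices to show the relative vertical maps $I^{\bar m}H_k(\td X, \td{\bd X}; F) \to I^{\bar n}H_k(\td X, \td{\bd X}; F)$ are also isomorphisms; a single application of the Five Lemma to the five-term segment surrounding $I^{\bar p}H_k(\td{\bd X}; F)$ will then give the conclusion, without needing any induction on $k$.

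For the relative step, I would first invoke Proposition \ref{j7}: both $I^{\bar m}C_*(\td X; F)$ and $I^{\bar n}C_*(\td X; F)$ are $F[\pi]$-chain homotopy equivalent to finite free complexes over $F[\pi]$. The hypothesis says the inclusion $I^{\bar m}C_*(\td X; F) \hookrightarrow I^{\bar n}C_*(\td X; F)$ is a quasi-isomorphism, and since a quasi-isomorphism between chain complexes chain homotopy equivalent to bounded-below projective complexes is automatically a chain homotopy equivalence, this inclusion is in fact an $F[\pi]$-chain homotopy equivalence. Applying $\Hom_{F[\pi]}(-, F[\pi])$ then gives a cochain homotopy equivalence, and in particular an isomorphism $I_{\bar n}\bar H^*(\td X; F) \to I_{\bar m}\bar H^*(\td X; F)$. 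Universal Lefschetz Duality (Theorem \ref{T: univ lef}) now supplies cap-product isomorphisms
\[
I_{\bar n}\bar H^{n-k}(\td X; F) \cong I^{\bar m}H_k(\td X, \td{\bd X}; F), \qquad I_{\bar m}\bar H^{n-k}(\td X; F) \cong I^{\bar n}H_k(\td X, \td{\bd X}; F);
\]
naturality of the cap product in both the chain and cochain variables assembles these into a commutative square together with the change-of-perversity maps on each side, so the cohomological isomorphism transports to the desired relative-homology isomorphism.

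With both the absolute and relative change-of-perversity maps now known to be isomorphisms, the Five Lemma applied to the morphism of long exact sequences of $(\td X, \td{\bd X})$ yields that $I^{\bar m}H_*(\td{\bd X}; F) \to I^{\bar n}H_*(\td{\bd X}; F)$ is an isomorphism, which is precisely the condition for $f|_{\bd X}$ to be a global $F$-Witt map. The main obstacle is verifying that the Lefschetz-duality change-of-perversity square commutes, but this should be a routine naturality check using the chain-level construction of the cap product in Section \ref{S: universal cap} together with the fact that the fundamental class $\Gamma_X \in I^{\bar 0}H_n(X, \bd X; F)$ is a single class independent of the perversities attached to the cochain and chain factors of the cap product.
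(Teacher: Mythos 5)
Your proposal is correct and follows essentially the same route as the paper's proof: Proposition \ref{j7} combined with \cite[Exercise IV.4.2]{HS} upgrades the change-of-perversity quasi-isomorphism to an $F[\pi]$-chain homotopy equivalence, dualizing gives the isomorphism $I_{\bar n}\bar{H}^*(\td X;F)\to I_{\bar m}\bar{H}^*(\td X;F)$, Universal Lefschetz Duality (Theorem \ref{T: univ lef}) transports this to the relative homology isomorphism, and the Five Lemma applied to the long exact sequence of the pair finishes. The naturality square you flag as the ``main obstacle'' is indeed the one implicit step the paper leaves to the reader, and your justification for it is the right one.
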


For the proof we need a technical fact about intersection chains which will be
proved in Section \ref{j9}.

\begin{proposition}
\label{j7}
Let $X$ be a compact PL pseudomanifold with boundary.  
Let $\tilde{X}$ be a regular covering of $X$ with group $\pi$.
For any perversity 
$\bar p$, the chain complex $I^{\bar p}C_*(\tilde{X};F)$ is homotopy finite
over $F[\pi]$.
\end{proposition}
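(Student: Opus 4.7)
The plan is to compare $I^{\bar p}C_*(\tilde X;F)$ to a simplicial intersection chain complex built from an equivariantly lifted PL triangulation, and then upgrade a quasi-isomorphism to a chain homotopy equivalence by standard homological algebra. First I would pick a PL triangulation $T$ of $X$ subordinate to the filtration (so that each stratum of $X$ and each stratum of $\bd X$ is a union of open simplices of $T$), which exists since $X$ is a PL $\bd$-stratified pseudomanifold. Lifting $T$ to a $\pi$-equivariant triangulation $\tilde T$ of $\tilde X$, and subdividing if necessary so that the free $\pi$-action permutes the simplices of $\tilde T$ freely, I would form the simplicial intersection chain complex $I^{\bar p}C^{\tilde T}_*(\tilde X;F)$ as the subcomplex of simplicial chains on $\tilde T$ that are $\bar p$-allowable, taken with the stratified coefficient convention of \cite{GBF23}. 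Since $T$ is finite and $\pi$ acts freely on simplices of $\tilde T$, the complex $I^{\bar p}C^{\tilde T}_*(\tilde X;F)$ is free and finitely generated in each degree over $F[\pi]$ and vanishes outside the range $0\leq *\leq \dim X$; hence it is finite in the sense of Definition \ref{n16}.

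Next I would introduce the natural $F[\pi]$-equivariant chain map
\[
\iota:I^{\bar p}C^{\tilde T}_*(\tilde X;F)\longrightarrow I^{\bar p}C_*(\tilde X;F)
\]
sending each simplex of $\tilde T$ to its characteristic singular simplex, and argue that $\iota$ is a quasi-isomorphism. This is the equivariant analogue of the standard comparison between simplicial/PL and singular intersection homology for PL pseudomanifolds. It can be verified by a Mayer-Vietoris induction along an open cover of $\tilde X$ by preimages of distinguished neighborhoods in $X$, using that the iterated barycentric subdivision operator on singular intersection chains is natural (and therefore $F[\pi]$-equivariant), together with the known local cone calculation on a distinguished neighborhood.

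Finally, both $I^{\bar p}C^{\tilde T}_*(\tilde X;F)$ and $I^{\bar p}C_*(\tilde X;F)$ are nonnegatively graded chain complexes of free---hence projective---$F[\pi]$-modules, so any quasi-isomorphism between them is automatically a chain homotopy equivalence over $F[\pi]$ by the standard comparison theorem for projective resolutions (see, e.g., \cite[Section 10.4]{WEIB}). This exhibits $I^{\bar p}C_*(\tilde X;F)$ as chain homotopy equivalent over $F[\pi]$ to the finite complex $I^{\bar p}C^{\tilde T}_*(\tilde X;F)$, completing the proof. The main obstacle is the middle step: verifying that $\iota$ is a quasi-isomorphism $\pi$-equivariantly and for the stratified coefficient version of the theory; the boundary of $X$ causes no serious additional difficulty because $\bd X$ is by construction a subcomplex of $T$ and is collared.
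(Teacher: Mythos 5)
Your proposal takes a genuinely different route from the paper, but it has a gap that the paper goes to some trouble to avoid. The key step in your last paragraph asserts that both $I^{\bar p}C^{\tilde T}_*(\tilde X;F)$ and $I^{\bar p}C_*(\tilde X;F)$ are complexes of free $F[\pi]$-modules, and uses this to upgrade the quasi-isomorphism $\iota$ to a chain homotopy equivalence. But freeness of an intersection chain complex over $F[\pi]$ is not automatic. While the full (singular or simplicial) chain complex of $\tilde X$ is indeed $F[\pi]$-free because $\pi$ permutes simplices freely, the intersection chain complex is the \emph{sub}module of chains $\xi$ for which both $\xi$ and $\bd\xi$ are allowable; the condition on $\bd\xi$ is a linear constraint that mixes simplices, so $I^{\bar p}C_i(\tilde X;F)$ is merely a submodule of a free $F[\pi]$-module, and submodules of free modules over group rings need not be free (or even projective). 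The paper is explicitly aware of this: instead of freeness, it proves the weaker but sufficient statement that $I^{\bar p}C_*(\tilde X;F)$ is \emph{chain homotopy equivalent} over $F[\pi]$ to a nonnegatively graded complex of free $F[\pi]$-modules (Proposition~\ref{f1}.1), and this already requires the nontrivial colimit decomposition of Proposition~\ref{n3} and Lemma~\ref{n15}. Without some substitute for Proposition~\ref{f1}, your appeal to the comparison theorem of \cite[Section~10.4]{WEIB} does not apply.

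The middle step also needs more care. A Mayer–Vietoris induction along an open cover is the natural tool for singular intersection chains (which satisfy an excision/colimit property, Proposition~\ref{n3}), but simplicial chains relative to a \emph{fixed} triangulation $\tilde T$ do not decompose over an arbitrary open cover of $\tilde X$; distinguished neighborhoods are not subcomplexes of $\tilde T$, so the inductive comparison as sketched does not set up. A comparison between simplicial intersection homology for a flag-like/full triangulation and singular intersection homology does exist in the literature (and would make your approach viable), but it is a substantial theorem in its own right, typically proved by first passing to PL chains over all subdivisions and then invoking the PL--singular equivalence, and it should be cited rather than re-derived by the Mayer–Vietoris sketch you give. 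By contrast, the paper avoids invoking any simplicial/singular comparison: it reduces to the interior $X'=X-\bd X$ (to have access to the Deligne sheaf), covers $X'$ by finitely many open stars, and inducts via homotopy pushouts (Lemma~\ref{n19}), obtaining finite generation of the local homology from cohomological constructibility of the Deligne sheaf (Lemma~\ref{n18}). If you wish to pursue your route, you must (i) either prove a Proposition~\ref{f1}-type replacement-by-free-complex statement or explain why the intersection subcomplex is projective over $F[\pi]$, and (ii) cite or prove the equivariant simplicial-vs-singular comparison rather than hand-waving it.
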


\begin{proof}[Proof of Proposition \ref{m3}]
By Proposition \ref{j7} and \cite[Exercise IV.4.2]{HS}, the map
\[
I^{\bar m} C_*(\tilde{X};F)
\to
I^{\bar n} C_*(\tilde{X};F)
\]
is a chain homotopy equivalence over $F[\pi]$.  Therefore (using Notation
\ref{m4}.2) the map
\[
I_{\bar n} \bar{H}^*(\tilde{X};F)
\to
I_{\bar m} \bar{H}^*(\tilde{X};F)
\]
is an isomorphism. 
Let $p:\tilde{X}\to X$ be the covering induced by $f$.
Then Theorem \ref{T: univ lef} implies that the map 
\[
I^{\bar m} H_*(\tilde{X},p^{-1}(\partial X);F)
\to
I^{\bar n} H_*(\tilde{X},p^{-1}(\partial X);F)
\]
is an isomorphism.
Now the five lemma shows that the map
\[
I^{\bar m} H_*(p^{-1}(\partial X);F)
\to
I^{\bar n} H_*(p^{-1}(\partial X);F)
\]
is an isomorphism as required.
\end{proof}

\subsection{Definition of the symmetric signature for global $F$-Witt maps}

\label{j10}

Let $f:X\to B\pi$ be a global $F$-Witt map, and let  $\tilde{X}$ be the induced 
cover of $X$.
In order to define the symmetric signature of $f$ we follow the pattern of 
Section \ref{j20}:

By Proposition \ref{P: tensor flat}, the map 
\[
F\otimes_{F[\pi]} (I^{\bar n}C_*(\td X;F)\otimes_F
I^{\bar n}C_*(\td X;F))
\xrightarrow{1\otimes \times}
F\otimes_{F[\pi]} I^{Q_{\bar n ,\bar n}}C_*(\td X\times \td X;F)
\]
is a  quasi-isomorphism, which is evidently $\Z/2$-equivariant \red{with 
respect to the $\Z/2$-actions that permute the factors of $I^{\bar n}C_*(\td X;F)\otimes_F
I^{\bar n}C_*(\td X;F)$ and $\td X\times \td X$}.  
Combining this with the isomorphism
\[
(I^{\bar n}C_*(\td X;F))^t\otimes_{F[\pi]} I^{\bar
n}C_*(\td
X;F)
\cong
F\otimes_{F[\pi]} (I^{\bar n}C_*(\td X;F)\otimes_F
I^{\bar n}C_*(\td X;F)),
\]
we obtain an isomorphism
\begin{multline*}
\Upsilon: 
H_*(\Hom_{\Z[\Z/2]}(W,(I^{\bar n}C_*(\td X;F))^t\otimes_{F[\pi]} I^{\bar 
n}C_*(\td
X;F)))
\\
\xrightarrow{\cong}
H_*(\Hom_{\Z[\Z/2]}(W,F\otimes_{F[\pi]} I^{Q_{\bar n ,\bar n}}C_*(\td X\times
\td X;F))).
\end{multline*}

Next we construct a class 
\[
c_f\in H_n(\Hom_{\Z[\Z/2]}(W,F\otimes_{F[\pi]} I^{Q_{\bar n,
\bar n}}C_*(\td X\times \td X;F))).
\]

\begin{definition}
\label{j17}
(i)
Let $b_X\in H_n( F\otimes_{F[\pi]} I^{\bar 0}C_*(\td X;F))$ map to the
fundamental class $\Gamma_X$ under the isomorphism 
$$H_*(F\otimes_{F[\pi]} I^{\bar 0}C_*(\td{X};F))
\to
I^{\bar 0}H_*(X;F)$$
given by Proposition \ref{n3}.3.  

(ii)
Let
$\zeta_X$ be a cycle representing $b_X$ and let 
$c_f$ 
be the class represented by the composite
\[
W\xrightarrow{\varepsilon}
\Z
\xrightarrow{\zeta_X}
F\otimes_{F[\pi]} I^{\bar 0}C_*(\td X;F)
\xrightarrow{1\otimes d}
F\otimes_{F[\pi]} I^{Q_{\bar n ,\bar n}}C_*(\td X\times
\td X;F).
\]
\end{definition}

\begin{proposition}
\label{j6}
Let 
\[
\psi: W\to (I^{\bar n}C_*(\td X;F))^t\otimes_{F[\pi]} I^{\bar
n}C_*(\td X;F)
\]
be a $\Z/2$-equivariant chain map that represents $\Upsilon^{-1}(c_f)$.  
Then 
\[
(I^{\bar n}C_*(\td X;F),\psi)
\] 
is a symmetric Poincar\'e complex.
\end{proposition}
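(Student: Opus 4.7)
The plan is to verify the three defining properties of a symmetric Poincar\'e complex (Definition \ref{j1}) for $(I^{\bar n}C_*(\td X;F),\psi)$: homotopy finiteness of the chain complex over $F[\pi]$, the degree condition on $\psi$, and the slant-product isomorphism. The first is immediate from Proposition \ref{j7} applied with $\partial X=\emptyset$. The second holds by construction, because $c_f\in H_n$ and $\Upsilon$ preserves degrees, so $\Upsilon^{-1}(c_f)\in H_n$ as well and any chain map $\psi$ representing it raises degree by $n$. So the entire content is in checking the slant-product condition.

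For this, I would first identify $\psi_*(\iota)$ explicitly. Tracing through the definition of $\Upsilon$ together with the definition of $c_f$ in Definition \ref{j17}, one sees that the chain-level composite representing $c_f$ is obtained exactly by applying the diagonal $d$ to a cycle $\zeta_X$ lifting $\Gamma_X$. But this is precisely the chain-level construction used to build the map $\td d$ of Section \ref{S: universal cap} (with $\bar r=\bar 0$, $\bar p=\bar q=\bar n$; note $D\bar 0\geq D\bar n+D\bar n$ since $\bar m=\lfloor(\codim-2)/2\rfloor$, so this cap product is defined). Hence $\psi_*(\iota)$ is represented by $\td d(\Gamma_X)\in H_n((I^{\bar n}C_*(\td X;F))^t\otimes_{F[\pi]}I^{\bar n}C_*(\td X;F))$. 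Comparing the slant product formula $\alpha\otimes x\otimes y\mapsto (-1)^{|\alpha||x|}x\alpha(y)$ with the explicit formula for $\alpha\smallfrown x$ given in Section \ref{S: universal cap}, we conclude that slant product with $\psi_*(\iota)$ is the map $\alpha\mapsto \alpha\smallfrown \Gamma_X:I_{\bar n}\bar H^i(\td X;F)\to I^{\bar n}H_{n-i}(\td X;F)$.

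Next I would factor this cap product through the complementary-perversity version. Because $\bar m\leq \bar n$ as perversities, the natural inclusion $I^{\bar m}C_*(\td X;F)\hookrightarrow I^{\bar n}C_*(\td X;F)$ intertwines the two cap-product constructions, giving a commutative triangle
\[
\xymatrix{
I_{\bar n}\bar H^i(\td X;F)\ar[r]^-{\smallfrown\Gamma_X}\ar[rd]_-{\smallfrown\Gamma_X}
& I^{\bar m}H_{n-i}(\td X;F)\ar[d]\\
& I^{\bar n}H_{n-i}(\td X;F).
}
\]
The horizontal arrow is an isomorphism by Universal Poincar\'e Duality (Theorem \ref{T: universal duality}) applied to the complementary perversities $\bar n$ and $\bar m$, and the vertical arrow is an isomorphism by the assumption that $f$ is a global $F$-Witt map (Definition \ref{m2}). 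Hence the diagonal arrow, which is the slant product with $\psi_*(\iota)$, is an isomorphism, completing the verification.

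The main obstacle is the first identification: confirming that $\psi_*(\iota)$ is represented by $\td d(\Gamma_X)$ and that slant product agrees with the universal cap product of Section \ref{S: universal cap}. This is essentially a definition-chase, but it requires matching up the Koszul signs appearing in the slant product with those in the cap product formula, and keeping track of how $\Upsilon$ transports cycles between the two complexes $F\otimes_{F[\pi]}I^{Q_{\bar n,\bar n}}C_*(\td X\times\td X;F)$ and $(I^{\bar n}C_*(\td X;F))^t\otimes_{F[\pi]}I^{\bar n}C_*(\td X;F)$. Once this identification is established, the rest of the argument is a direct combination of Theorem \ref{T: universal duality} with the Witt-map hypothesis.
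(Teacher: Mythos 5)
Your proof is correct and follows essentially the same route as the paper's: identify the slant product with $\psi_*(\iota)$ as the cap product $\smallfrown\Gamma_X$, factor it through $I^{\bar m}H_*(\td X;F)$ via the natural inclusion, and conclude via universal Poincar\'e duality (Theorem \ref{T: universal duality}) together with the global $F$-Witt hypothesis. The paper carries out the intertwining step (your "the inclusion $I^{\bar m}\hookrightarrow I^{\bar n}$ intertwines the two cap-product constructions") by a single explicit commutative diagram showing that $\td d(\Gamma_X)$ and $\psi_*(\iota)$ are both images of the same class $b_X$ under the natural maps, but the content is the same as your definition-chase.
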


Before proving this we give

\begin{definition}
\label{j8}
The {\it symmetric signature of a global $F$-Witt map $f:X\to B\pi$}, denoted 
$\sW(f)$, is the class in $L^n(F[\pi])$ represented  by $(I^{\bar n}C_*(\td
X;F),\psi)$, with $\psi$ as in Proposition \ref{j6}.
\end{definition}

\begin{proof}[Proof of Proposition \ref{j6}]
First we note that $I^{\bar n}C_*(\td X;F)$ is homotopy finite over $F[\pi]$ 
by Proposition \ref{j7}.  It remains to show
that the slant product
with $\psi_*(\iota)$ induces an isomorphism 
\[
H^*(\Hom_{F[\pi]}(I^{\bar n}C_*(\td X;F),F[\pi]))
\to
H_{n-*}(I^{\bar n}C_*(\td X;F)).
\]
Consider the diagram
\[
\xymatrix{
H^*(\Hom_{F[\pi]}(I^{\bar n}C_*(\td X;F),F[\pi]))
\ar[r]^-{\backslash{\td d}(\Gamma_X)}
\ar[rd]_{\backslash \psi_*(\iota)}
&
H_{n-*}(I^{\bar m}C_*(\td X;F))
\ar[d]
\\
&
H_{n-*}(I^{\bar n}C_*(\td X;F)).
}
\]
The map $\td{d}$ was defined in Section \ref{S: universal cap}. 
The vertical arrow is 
an isomorphism by the definition of global $F$-Witt map
and the horizontal arrow is an isomorphism by Theorem \ref{T: universal 
duality}, so it suffices to show that the diagram commutes.

For this it suffices to show that the lower horizontal arrow in the following
commutative diagram takes ${\td
d}(\Gamma_X)$
to $\psi_*(\iota)$.
\[
\xymatrix{
H_*(F\otimes_{F[\pi]} I^{\bar 0}C_*(\td X;F))
\ar[d]^{1\otimes d}
\ar[r]^=
&
H_*(F\otimes_{F[\pi]} I^{\bar 0}C_*(\td X;F))
\ar[d]^{1\otimes d}
\\
H_*(F\otimes_{F[\pi]} I^{Q_{\bar n ,\bar m}}C_*(\td X\times
\td X;F))
\ar[r]
&
H_*(F\otimes_{F[\pi]} I^{Q_{\bar n ,\bar n}}C_*(\td X\times
\td X;F))
\\
H_*(F\otimes_{F[\pi]} (I^{\bar n}C_*(\td X;F)\otimes_F
I^{\bar m}C_*(\td X;F)))
\ar[u]^\cong_{1\otimes\times}
\ar[d]_\cong
\ar[r]
&
H_*(F\otimes_{F[\pi]} (I^{\bar n}C_*(\td X;F)\otimes_F
I^{\bar n}C_*(\td X;F)))
\ar[u]^\cong_{1\otimes\times}
\ar[d]_\cong
\\
H_*((I^{\bar n}C_*(\td X;F))^t\otimes_{F[\pi]} I^{\bar m}C_*(\td X;F))
\ar[r]
&
H_*((I^{\bar n}C_*(\td X;F))^t\otimes_{F[\pi]} I^{\bar n}C_*(\td X;F))
}
\]
The definition of $\td{d}$ shows that ${\td
d}(\Gamma_X)$ is the image of $b_X$ (see Definition \ref{j17}(i)) under the 
left vertical composite, and the definition of $\psi$ shows that 
$\psi_*(\iota)$ is the image of $b_X$ under the right vertical composite, 
which completes the proof.
\end{proof}

\subsection{Properties of the symmetric signature for global $F$-Witt maps}
\label{j22}

We begin by showing (Proposition \ref{j12}) 
that $\sW$ is consistent with the usual symmetric signature $\sigma^*$ when $X$ is a manifold and 
(Proposition \ref{j13}) that $\sW$ is consistent
with the Witt class $w$ of $X$, as defined in \cite[Section I.4]{Si83} and 
\cite[Section 4.1]{GBF21}.  We then show that $\sW$ is additive with respect
to disjoint union (Proposition \ref{j14}) and multiplicative with respect to
Cartesian product (Theorem \ref{j15}).  
Next we show that $\sW$ is
invariant under oriented homeomorphism and oriented stratified
homotopy equivalence (Theorem \ref{j16}), and bordism of global $F$-Witt maps 
(Theorem \ref{j18}).
Finally, we note that $\sW$ agrees rationally with the 
signature index class constructed in \cite[Theorem 1.1]{ALMP-combo}; it would be
interesting to know whether they also agree integrally.

\begin{proposition}
\label{j12}
If $X$ is a compact oriented manifold and $f:X\to B\pi$ is any map then 
$\sW(f)$ is equal to the usual symmetric signature $\sigma^*(f)$. 
\end{proposition}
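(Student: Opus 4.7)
The plan is to reduce both $\sW(f)$ and $\sigma^*(f)$ to the common reformulation of Corollary \ref{j4} and then check that the ingredients agree in the nonsingular case. The construction of $\sigma^*(f)$ in Section \ref{j19} produces a class in $L^n(\Z[\pi])$, while $\sW(f)$ lives in $L^n(F[\pi])$; the identification is meant after base change along $\Z\to F$, and this change of coefficients is compatible with all the formal constructions involved.

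First, since $X$ is a nonsingular manifold, every stratum is a regular stratum and the allowability condition is vacuous for every perversity. Hence $I^{\bar p}C_*(\tilde X;F)=C_*(\tilde X;F)$ as chain complexes of $F[\pi]$-modules for every perversity $\bar p$, and similarly $I^{Q_{\bar n,\bar n}}C_*(\tilde X\times\tilde X;F)=C_*(\tilde X\times\tilde X;F)$. Under these identifications the intersection chain cross product becomes the ordinary singular cross product, the geometric diagonal $d$ of \cite[Proposition 4.2.1]{GBF25} becomes the standard chain-level diagonal, and the class $b_X$ of Definition \ref{j17}(i) corresponds (via Proposition \ref{n3}.3) to the image of the classical fundamental class of $X$ under the change of coefficients $\Z\to F$.

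Consequently, the isomorphism $\Upsilon$ and the class $c_f$ of Section \ref{j10} reduce to the $\Upsilon$ and $c_f$ constructed in Section \ref{j20}, with $\Z$ replaced by $F$. The proof of Corollary \ref{j4} goes through verbatim over $F$ because it uses only naturality of EAW and EEZ and the freeness of normalized singular chains over the group ring. Therefore any $\Z/2$-equivariant chain map
\[
\psi:W\to C_*(\tilde X)^t\otimes_{F[\pi]} C_*(\tilde X)
\]
whose homology class equals $\Upsilon^{-1}(c_f)$ represents $\sigma^*(f)$ in $L^n(F[\pi])$ by (the $F$-coefficient version of) Corollary \ref{j4} and represents $\sW(f)$ by Definition \ref{j8}. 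Hence the two classes agree.

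The only real bookkeeping is to verify that, in the nonsingular case, the intersection-theoretic diagonal, cross product, and fundamental class coincide with their classical counterparts; this is essentially immediate from vacuity of the allowability condition, though one must track signs and the interaction of the coefficient change $\Z\to F$ with Definition \ref{j8}. I do not expect a serious obstacle; the argument is really an unwinding of definitions, enabled by Corollary \ref{j4}, which was designed precisely to make this comparison transparent.
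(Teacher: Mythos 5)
Your argument is correct and takes essentially the same approach as the paper, whose entire proof is the sentence ``This is immediate from Corollary \ref{j4}.'' You are simply making explicit what the paper leaves to the reader: that for a nonsingular $X$ the allowability conditions are vacuous so $I^{\bar p}C_*=C_*$ for every perversity, that the intersection-chain diagonal, cross product, and fundamental class therefore reduce to their classical counterparts, and that the comparison of $L^n(\Z[\pi])$ with $L^n(F[\pi])$ is made via the base change $\Z\to F$ (equivalently, one reads Corollary \ref{j4} with $F$ coefficients throughout, as the paper tacitly does after Section \ref{m1}).
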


\begin{proof} 
This is immediate from Corollary \ref{j4}.
\end{proof}

For our next result, we recall that
there is a map $L^n(F[\pi])\to L^n(F)$ which takes the class
of 
\[
(C,W\xrightarrow{\phi} C^t\otimes_{F[\pi]} C)
\]
to the class of\footnote{Here $C/\pi$ is a convenient shorthand for $F\otimes_{F[\pi]} C$, where $F$ is given the trivial $\pi$ action.} 
\[
(C/\pi, W\xrightarrow{\phi} C^t\otimes_{F[\pi]} C
\to C/\pi \otimes_F\, C/\pi ).
\]
Moreover, if $n\equiv 0$ mod 4, or if char$(F)=2$ and $n\equiv 0$ mod 2, we can
construct a map $L^n(F)\to W(F)$ (where $W(F)$ is the Witt group) as follows:
a symmetric Poincar\'e complex $(D, \psi)$ over $F$ determines an inner product
\[
H_{n/2}(\Hom_F(D,F))\otimes_FH_{n/2}(\Hom_F(D,F))\to F
\]
which takes $\alpha\otimes \beta$ to $(\alpha\otimes \beta)(\psi_*(\iota))$,
and the proof of \cite[Proposition
VIII.9.6]{Dold} shows that the element of $W(F)$ represented by this inner
product depends only on the bordism class of $(D, \psi)$.

\begin{proposition}
\label{j13}
Let $f:X\to B\pi$ be a global $F$-Witt map.  Let $n$ be the dimension of $X$ and
suppose that
$n\equiv 0$ mod 4 or $\mathrm{char}(F)=2$ and $n\equiv 0$ mod 2.
Then the composite
\[
L^{n}(F[\pi])\to L^{n}(F)\to W(F)
\]
takes $\sW(f:X\to B\pi)$ to the Witt class $w(X)$ (that is, the class of the 
intersection form on $I^{\bar n}H_{n/2}(X;F)$).
\end{proposition}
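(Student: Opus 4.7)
The plan is to trace $\sW(f)$ through the two maps in the composite $L^{n}(F[\pi])\to L^{n}(F)\to W(F)$ and identify its image with the intersection form on $I^{\bar n}H_{n/2}(X;F)$.

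The first step is to apply $L^n(F[\pi])\to L^n(F)$ to the representative $(I^{\bar n}C_*(\td X;F),\psi)$ of $\sW(f)$, yielding the symmetric Poincar\'e complex $(C/\pi,\bar\psi)$, where $C=I^{\bar n}C_*(\td X;F)$ and $\bar\psi$ is the composition of $\psi$ with the canonical map $C^t\otimes_{F[\pi]}C\to C/\pi\otimes_F C/\pi$. By Proposition \ref{n3}.3 the chain complex $C/\pi$ is quasi-isomorphic to $I^{\bar n}C_*(X;F)$, so for the purpose of computing the Witt class I may replace the underlying chain complex by $I^{\bar n}C_*(X;F)$. To identify the class $\bar\psi_*(\iota)\in H_n(I^{\bar n}C_*(X;F)\otimes_F I^{\bar n}C_*(X;F))$, I apply $F\otimes_{F[\pi]}(-)$ to the large commutative diagram in the proof of Proposition \ref{j6}; together with the fact that the class $b_X$ of Definition \ref{j17}(i) reduces to $\Gamma_X$, this shows that $\bar\psi_*(\iota)$ is the unique preimage under the K\"unneth isomorphism induced by the cross product of the class $d_*(\Gamma_X)\in I^{Q_{\bar n,\bar n}}H_n(X\times X;F)$.

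Next, the map $L^n(F)\to W(F)$ sends $(I^{\bar n}C_*(X;F),\bar\psi)$ to the class of the bilinear form on $I_{\bar n}H^{n/2}(X;F)=H_{n/2}(\Hom_F(I^{\bar n}C_*(X;F),F))$ defined by $\alpha\otimes\beta\mapsto(\alpha\otimes\beta)(\bar\psi_*(\iota))$. Substituting the description of $\bar\psi_*(\iota)$ from the previous step and using the standard relation $(\alpha\times\beta)(c\times c')=\pm\alpha(c)\beta(c')$ between the chain and cochain cross products, together with the definition $\alpha\cup\beta=d^*(\alpha\times\beta)$ of the intersection cup product, transforms this pairing into
\[
\alpha\otimes\beta\mapsto\pm(\alpha\times\beta)(d(\xi_X))=\pm(\alpha\cup\beta)(\xi_X)=\pm\langle\alpha\cup\beta,\Gamma_X\rangle,
\]
where $\xi_X$ is any cycle representative of $\Gamma_X$.

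Finally, universal Poincar\'e duality applied to the trivial covering, combined with the global $F$-Witt hypothesis, furnishes isomorphisms $I_{\bar n}H^{n/2}(X;F)\cong I^{\bar m}H_{n/2}(X;F)\cong I^{\bar n}H_{n/2}(X;F)$ under which the cup-product/fundamental-class pairing becomes the intersection pairing, i.e., the form whose Witt class is $w(X)$. The main obstacle is sign bookkeeping: the Koszul signs in the evaluation and cross-product formulas, together with the $(-1)^{in}$ sign in the Poincar\'e duality map $\ms D$, need to be tracked carefully so that the two pairings agree exactly rather than merely up to a global sign. These checks are routine given the sign conventions laid out in Section \ref{S: background} and the intersection-theoretic identities recorded in \cite[Section 4]{GBF25}.
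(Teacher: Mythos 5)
Your proposal follows the same overall strategy as the paper: apply $L^n(F[\pi])\to L^n(F)$ to the representative, replace the underlying chain complex by $I^{\bar n}C_*(X;F)$ via Proposition \ref{n3}.3, identify the resulting symmetric structure with the algebraic diagonal $\bar d(\Gamma_X)$, and thus compute the $W(F)$-class as the cup-product/fundamental-class pairing $\alpha\otimes\beta\mapsto(\alpha\cup\beta)(\Gamma_X)$ on $I_{\bar n}H^{n/2}(X;F)$. Up to that point you agree with the paper's argument; two points should be made more precise, one of which is a genuine gap.

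The minor point: replacing $I^{\bar n}C_*(\td X;F)/\pi$ by $I^{\bar n}C_*(X;F)$ needs a formal justification that the Witt class is unchanged. The paper supplies this by observing that the chain homotopy equivalence of Proposition \ref{n3}.3 gives a homotopy equivalence of symmetric Poincar\'e complexes in the sense of \cite[Definition 1.6(ii)]{Ra92}, and hence a bordism by \cite[Proposition 1.13]{Ra92}. This is easy, but it is not merely a matter of being ``quasi-isomorphic.''

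The substantive gap is in your final paragraph. You assert that Poincar\'e duality, together with the global $F$-Witt hypothesis, transforms the pairing $\alpha\otimes\beta\mapsto(\alpha\cup\beta)(\Gamma_X)$ on $I_{\bar n}H^{n/2}(X;F)$ into the intersection form on $I^{\bar n}H_{n/2}(X;F)$, and describe the remaining work as ``routine sign bookkeeping.'' This is not routine. The identification of the Poincar\'e dual of the intersection-cohomology cup product with the chain-level intersection pairing on $I^{\bar m}H_{n/2}(X;F)$ is a theorem in its own right. The paper invokes \cite[Proposition 4.19]{GBF25} together with the main result of \cite{GBF30} precisely for this step, and without such a reference your argument has no bridge between the cohomological pairing you compute and the intersection form that defines $w(X)$. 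So the approach is right, but you need to cite (or prove) the comparison of the cup-product form with the intersection form rather than treating it as a sign check.
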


\begin{proof}
Let $(I^{\bar n}C_*(\td X;F),\psi)$ be a representative for $\sW(f:X\to B\pi)$,
where $\psi$ satisfies the condition of Proposition \ref{j6}.
The image of $\sW(f:X\to B\pi)$ in $L^n(F)$ is represented by 
$(I^{\bar n}C_*(\td X;F)/\pi, \omega)$,
where $\omega$ is the composite
\[
W\xrightarrow{\psi} I^{\bar n}C_*(\td X;F)^t\otimes_{F[\pi]}
I^{\bar n}C_*(\td X;F)
\to
I^{\bar n}C_*(\td X;F)/\pi\otimes_F I^{\bar n}C_*(\td X;F)/\pi.
\]
Let $\omega'$ be the composite
\[
W\xrightarrow{\omega}
I^{\bar n}C_*(\td X;F)/\pi\otimes_F I^{\bar n}C_*(\td X;F)/\pi
\to
I^{\bar n}C_*(X;F)\otimes_F I^{\bar n}C_*(X;F).
\]
The map $I^{\bar n}C_*(\td X;F)/\pi\to I^{\bar n}C_*(X;F)$ is a
chain homotopy equivalence by Proposition \ref{n3}.3, and so $(I^{\bar
n}C_*(X;F),\omega')$ is bordant to $(I^{\bar n}C_*(\td X;F)/\pi, \omega)$ by 
\cite[Proposition 1.13 and Definition 1.6(ii)]{Ra92}.
It is straightforward to check that $\omega'_*(\iota)$ is the element
$\bar d (\Gamma_X)$, where $\bar d$ is the algebraic diagonal defined in
\cite[Section 4.1]{GBF25}.  Thus the image of $\sW(f:X\to B\pi)$ in $W(F)$ is 
represented by the inner product
\[
I_{\bar n}H^{n/2}(X;F)\otimes_F I_{\bar n}H^{n/2}(X;F)\to F
\]
which takes $\alpha\otimes \beta$ to $(\alpha\otimes \beta)\bar d
(\Gamma_X)=(\alpha\cup\beta)(\Gamma_X)$. By \cite[Proposition 4.19]{GBF25} 
and the main result of \cite{GBF30} we see that the Poincar\'e duality 
isomorphism $I_{\bar n}H^{n/2}(X;F)\to I^{\bar m}H_{n/2}(X;F)$ takes this 
inner product to the intersection form.
\end{proof}

\begin{proposition}
\label{j14}
If $f:X\to B\pi$ and 
$g:Y\to B\pi$ are global $F$-Witt maps and $X$ and $Y$ have the same dimension then
\[
\sW(f\textstyle{\coprod} g:X\textstyle{\coprod} Y\to B\pi)=\sW(f)+\sW(g).
\]
\end{proposition}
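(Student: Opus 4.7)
The plan is to show that the construction of $\sW$ in Definition \ref{j8} is compatible with disjoint unions, so that $\sW(f\coprod g)$ can be represented by a direct sum (in the sense of Definition \ref{j+}.1) of representatives for $\sW(f)$ and $\sW(g)$. Addition in $L^n(F[\pi])$ is by direct sum, so this suffices.

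First I would observe that the pullback cover $\wt{X\coprod Y}$ induced by $f\coprod g$ splits canonically as $\td X\coprod \td Y$, where $\td X$ and $\td Y$ are the covers induced by $f$ and $g$. Intersection chains are additive over disjoint unions, so
\[
I^{\bar n}C_*(\wt{X\coprod Y};F)\cong I^{\bar n}C_*(\td X;F)\oplus I^{\bar n}C_*(\td Y;F)
\]
as $F[\pi]$-chain complexes. Under the analogous splitting of $I^{\bar 0}C_*$, the fundamental class $\Gamma_{X\coprod Y}$ corresponds to $\Gamma_X+\Gamma_Y$, so we may take $\zeta_{X\coprod Y}=\zeta_X+\zeta_Y$ as a representative of $b_{X\coprod Y}$ in Definition \ref{j17}.

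The key geometric observation is that the diagonal of $\td X\coprod\td Y$ factors through the sub-pseudomanifold $(\td X\times\td X)\coprod(\td Y\times\td Y)$ of $(\td X\coprod\td Y)\times(\td X\coprod\td Y)$, avoiding the mixed components $\td X\times\td Y$ and $\td Y\times\td X$. Consequently, after passing to intersection chains and applying $1\otimes d$ to $\zeta_{X\coprod Y}$, the class $c_{f\coprod g}$ lies in the image of the natural inclusion
\[
F\otimes_{F[\pi]}I^{Q_{\bar n,\bar n}}C_*(\td X\times\td X;F)\ \oplus\ F\otimes_{F[\pi]}I^{Q_{\bar n,\bar n}}C_*(\td Y\times\td Y;F)
\hookrightarrow F\otimes_{F[\pi]}I^{Q_{\bar n,\bar n}}C_*((\wt{X\coprod Y})^2;F),
\]
and corresponds under this inclusion to $c_f+c_g$. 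The isomorphism $\Upsilon$ of Section \ref{j10} is built from the cross product and the canonical identification between tensor products of left and right modules, both of which respect the decomposition; hence $\Upsilon^{-1}(c_{f\coprod g})$ is the image of $\Upsilon^{-1}(c_f)+\Upsilon^{-1}(c_g)$ under the evident inclusion
\[
\bigl((I^{\bar n}C_*(\td X;F))^t\otimes_{F[\pi]}I^{\bar n}C_*(\td X;F)\bigr)\oplus\bigl((I^{\bar n}C_*(\td Y;F))^t\otimes_{F[\pi]}I^{\bar n}C_*(\td Y;F)\bigr)\hookrightarrow (I^{\bar n}C_*(\wt{X\coprod Y};F))^t\otimes_{F[\pi]}I^{\bar n}C_*(\wt{X\coprod Y};F).
\]

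With these identifications in hand, one may choose $\psi_f$ and $\psi_g$ satisfying Proposition \ref{j6} for $f$ and $g$, and then take $\psi$ for $f\coprod g$ to be the composite of $\psi_f\oplus\psi_g$ (precomposed with the diagonal $W\to W\oplus W$) and the inclusion displayed above. This is precisely the $\psi$ of Definition \ref{j+}.1 applied to the direct sum of $(I^{\bar n}C_*(\td X;F),\psi_f)$ and $(I^{\bar n}C_*(\td Y;F),\psi_g)$, and it satisfies the condition of Proposition \ref{j6} for $f\coprod g$ by construction. The conclusion then follows from Definition \ref{j+}.3. The only real obstacle is the bookkeeping required to verify that every step in the construction of $\td d$ and $\Upsilon$ (involving Proposition \ref{n3}.3, the K\"unneth theorem, and Proposition \ref{P: tensor flat}) is natural with respect to the disjoint-union decomposition; each of these is a formal consequence of additivity of (co)chains and tensor products, so no new input is needed.
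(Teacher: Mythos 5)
Your proof is correct and is precisely the "straightforward diagram chase using Definitions \ref{j+} and \ref{j8}" that the paper asserts without spelling out: you carefully track how the decomposition $\wt{X\coprod Y}=\td X\coprod\td Y$, the fundamental class, the diagonal, and $\Upsilon$ all respect the direct-sum structure, arriving at the direct sum of Definition \ref{j+}.1.
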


\begin{proof}
The proof is a straightforward diagram chase using Definitions \ref{j+} and
\ref{j8}.
\end{proof}

Next recall the multiplication map
\[
L^m(F[\pi])\otimes L^n(F[\rho])\to L^{m+n}(F[\pi\times \rho])
\]
(see \cite[Proposition 8.1]{Ra80a}).  Also recall that $B\pi \times
B\rho$ is canonically homeomorphic to $B(\pi\times \rho)$.

\begin{proposition}
\label{j15}
If $f:X\to B\pi$, $g:Y\to B\rho$ are global $F$-Witt maps then
\[
\sW(f\times g)=\sW(f)\cdot\sW(g).
\]
\end{proposition}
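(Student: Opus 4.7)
The plan is to identify the symmetric Poincar\'e complex representing $\sW(f\times g)$ with the tensor product, in the sense of \cite[Proposition 8.1]{Ra80a}, of representatives for $\sW(f)$ and $\sW(g)$.

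First, observe that the cover of $X\times Y$ induced by $f\times g$ is canonically $\tilde X\times \tilde Y$ with the diagonal $(\pi\times \rho)$-action. By the K\"unneth theorem \cite[Theorem 3.1]{GBF25} together with Proposition \ref{P: tensor flat}, the cross product
\[
\times : I^{\bar n}C_*(\tilde X;F)\otimes_F I^{\bar n}C_*(\tilde Y;F)\xrightarrow{\simeq} I^{Q_{\bar n,\bar n}}C_*(\tilde X\times \tilde Y;F)
\]
is a $(\pi\times \rho)$-equivariant quasi-isomorphism; since both sides are homotopy finite over $F[\pi\times \rho]$ (Proposition \ref{j7}), it is in fact a chain homotopy equivalence over $F[\pi\times \rho]$ by \cite[Exercise IV.4.2]{HS}. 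A K\"unneth argument also shows that $f\times g$ is itself a global $F$-Witt map, and a perversity comparison analogous to the one in the proof of Proposition \ref{m3} (using the Witt condition together with Proposition \ref{j7}) then shows that the inclusion $I^{\bar n}C_*(\tilde X\times \tilde Y;F)\hookrightarrow I^{Q_{\bar n,\bar n}}C_*(\tilde X\times \tilde Y;F)$ is a chain homotopy equivalence over $F[\pi\times \rho]$. Thus up to chain homotopy equivalence, the complex underlying $\sW(f\times g)$ coincides with the tensor product of the complexes underlying $\sW(f)$ and $\sW(g)$.

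Next I would identify the symmetric structures. The fundamental class satisfies $\Gamma_{X\times Y}=\Gamma_X\times \Gamma_Y$ up to a standard sign, so one may choose the chain $\zeta_{X\times Y}$ of Definition \ref{j17} to be the image of $\zeta_X\otimes \zeta_Y$ under the chain-level cross product. The geometric diagonal of $X\times Y$ factors as $\sigma\circ(d_X\times d_Y)$, where $\sigma$ is the middle-two shuffle $(X\times X)\times(Y\times Y)\to(X\times Y)\times(X\times Y)$. Combining these observations with naturality of all the maps appearing in Definition \ref{j17}, the class $c_{f\times g}$ corresponds, under the K\"unneth identifications above, to the signed shuffle of $c_f\otimes c_g$ that Ranicki's formula \cite[Proposition 8.1]{Ra80a} prescribes for the $L$-theory product. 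Applying $\Upsilon^{-1}$ and invoking Proposition \ref{j6} then yields $\sW(f\times g)=\sW(f)\cdot \sW(g)$.

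The main obstacle will be the careful bookkeeping of Koszul signs and the middle-two shuffle $\sigma$ needed to match the chain-level symmetric structure $\psi_{f\times g}$ with Ranicki's $L$-theoretic product of $\psi_f$ and $\psi_g$ up to $\Z/2$-equivariant chain homotopy. A secondary technical point is the perversity comparison between $\bar n$ and $Q_{\bar n,\bar n}$ on the product pseudomanifold, which requires proving that $f\times g$ inherits the global Witt property from $f$ and $g$ and then applying Proposition \ref{j7} to upgrade the resulting quasi-isomorphism to a chain homotopy equivalence.
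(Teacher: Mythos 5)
Your proposal follows the same overall strategy as the paper---identify a representative of $\sW(f\times g)$ with the Ranicki product of representatives of $\sW(f)$ and $\sW(g)$ via the intersection-chain cross product---but two of the technical pivots are not argued correctly, and the second one is the heart of the proof.

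First, your claim that the inclusion $I^{\bar n}C_*(\tilde X\times \tilde Y;F)\hookrightarrow I^{Q_{\bar n,\bar n}}C_*(\tilde X\times \tilde Y;F)$ is a chain homotopy equivalence ``by a perversity comparison analogous to Proposition \ref{m3} using the Witt condition'' is not justified by that reasoning. The Witt condition compares $\bar m$ with $\bar n$; it says nothing directly about $\bar n$ versus $Q_{\bar n,\bar n}$ on a product. What the paper actually does is avoid this comparison entirely: it uses the cross product landing directly in $I^{\bar n}C_*(\tilde X\times \tilde Y;F)$ and cites \cite[Corollary 3.6]{GBF20} and its proof (not the Witt condition) to conclude it is a quasi-isomorphism. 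The cross product into a perversity $Q$ with $Q\geq \bar p+\bar q$ exists in general but is not generally a quasi-isomorphism, so this is a genuinely nontrivial point that cannot be waved away.

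Second, and more seriously, your treatment of the symmetric structures stops at the assertion that ``$c_{f\times g}$ corresponds, under the K\"unneth identifications above, to the signed shuffle of $c_f\otimes c_g$.'' This is exactly where the real work lies, and the bookkeeping is not just Koszul signs: the middle-two shuffle on $\tilde X\times\tilde Y\times\tilde X\times\tilde Y$ does \emph{not} simply carry the perversity $Q_{\bar n,\bar n}$ (the one governing $c_{f\times g}$) to the tensor of the two $Q_{\bar n,\bar n}$'s (the ones governing $c_f$ and $c_g$) on $\tilde X\times\tilde X\times\tilde Y\times\tilde Y$. The paper's proof constructs a new perversity $R$ on the four-fold product that dominates both, defines an inclusion $i: I^{Q_{\bar n,\bar n}}C_*(\tilde X\times\tilde Y\times\tilde X\times\tilde Y)\to I^R C_*(\tilde X\times\tilde X\times\tilde Y\times\tilde Y)$, and proves via a commuting square of cross products that $i$ is a quasi-isomorphism (using the K\"unneth theorem for $R$, which itself requires a nonstandard argument deferred to \cite{GregBook}). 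Only after composing with $1\otimes i$ does the comparison of the two chain-level maps become meaningful, and even then it takes two further diagram chases (composites \eqref{r3} and \eqref{r4}) to reduce to $\Gamma_{X\times Y}=\Gamma_X\times \Gamma_Y$. Without the perversity $R$ and the map $i$, the comparison you describe is not well-defined, so the proposal as written has a real gap at its central step.
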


\begin{proof}
First we note that
$\widetilde{X\times Y}$ is canonically homeomorphic to $\tilde{X} \times 
\tilde{Y}$.

Let $(I^{\bar n}C_*(\td{X};F),\psi_X)$ and 
$(I^{\bar n}C_*(\td{Y};F),\psi_Y)$ be representatives for $\sW(f)$ and 
$\sW(g)$.  Recall the map 
\[
\Delta:W\to W\otimes W
\]
defined on page 174 of \cite{Ra80a}.
The product $\sW(f)\cdot\sW(g)$ is (by definition) represented by 
\[
(I^{\bar n}C_*(\td{X};F)
\otimes_F I^{\bar n}C_*(\td{Y};F), \omega),
\]
where $\omega$ is the composite
\begin{multline*}
W\xrightarrow{\Delta}
W\otimes W
\xrightarrow{\phi\otimes\psi}
(I^{\bar n}C_*(\td{X};F)^t\otimes_{F[\pi]} I^{\bar n}C_*(\td{X};F))
\otimes
(I^{\bar n}C_*(\td{Y};F)^t\otimes_{F[\rho]} I^{\bar n}C_*(\td{Y};F))
\\
\cong
(I^{\bar n}C_*(\td{X};F)\otimes I^{\bar n}C_*(\td{Y};F))^t
\otimes_{F[\pi\times\rho]}
(I^{\bar n}C_*(\td{X};F)\otimes I^{\bar n}C_*(\td{Y};F)).
\end{multline*}
By \cite[Proposition 1.13]{Ra92} it suffices to show that there
is a homotopy equivalence (\cite[Definition 1.6(ii)]{Ra92}) from
$(I^{\bar n}C_*(\td{X};F)
\otimes_F I^{\bar n}C_*(\td{Y};F), \omega)$ to a representative for
$\sW(f\times g)$.
By \cite[page 382]{GBF20}, the cross product induces a map\footnote{While we 
have been using cross products of the form $I^{\bar p}C_*(A;F)\otimes_F 
I^{\bar p}C_*(B;F)\to I^{Q_{\bar p,\bar q}}C_*(A\times B;F)$, 
it follows  more 
generally from the argument  in \cite[page 382]{GBF20} that the cross product 
$I^{\bar p}C_*(A;F)\otimes_F I^{\bar p}C_*(B;F)\to I^{Q}C_*(A\times B;F)$
exists
(but is not necessarily a quasi-isomorphism) whenever $Q(S\times T)\geq 
\bar p(S)+\bar q(T)$ for any strata $S\subset A$ and $T\subset B$. In 
particular, this holds true when  $Q=\bar p=\bar q=\bar n$. } 
\[
I^{\bar n}C_*(\td{X};F)
\otimes_F I^{\bar n}C_*(\td{Y};F)
\to
I^{\bar n}C_*(\td{X}\times \td{Y};F),
\]
and this is a quasi-isomorphism by \cite[Corollary 3.6]{GBF20} and its proof.
To show that this map gives the desired homotopy equivalence of symmetric 
Poincar\'e complexes it suffices to show that the composite
\begin{multline}
\label{r2}
W\xrightarrow{\omega}
(I^{\bar n}C_*(\td{X};F)
\otimes_F I^{\bar n}C_*(\td{Y};F))^t\otimes_{F[\pi\times \rho]}
(I^{\bar n}C_*(\td{X};F)
\otimes_F I^{\bar n}C_*(\td{Y};F))
\\
\xrightarrow{\times\otimes\times}
I^{\bar n}C_*(\td{X}\times \td{Y};F)^t
\otimes_{F[\pi\times \rho]}
I^{\bar n}C_*(\td{X}\times \td{Y};F)
\\
\cong
F\otimes_{F[\pi\times \rho]} (I^{\bar n}C_*(\td{X}\times \td{Y};F)
\otimes_F I^{\bar n}C_*(\td{X}\times \td{Y};F))
\xrightarrow{1\otimes \times}
F\otimes_{F[\pi\times \rho]} (I^{Q_{\bar n ,\bar n}}
C_*(\td{X}\times \td{Y}\times \td{X}\times \td{Y};F))
\end{multline}
represents the homology class $c_{f\times g}$ defined in Definition
\ref{j17}(ii).

In order to do this,  we first define a perversity $R$ on $\td{X}\times 
\td{X}\times \td{Y}\times \td{Y}$ by 
\[
R(S\times T\times U\times V)
=\begin{cases}
Q_{\bar n ,\bar n}(S\times T)
+
Q_{\bar n ,\bar n}(U\times V)
&
\text{if $S\times T$ or $U\times V$ is nonsingular},\\
Q_{\bar n ,\bar n}(S\times T)
+
Q_{\bar n ,\bar n}(U\times V)
+2
&
\text{otherwise}.
\end{cases}
\]
Then
\[
Q_{\bar n ,\bar n}(S\times U\times T\times V)\leq
R(S\times T\times U\times V) 
\]
for all quadruples $S,T,U,V$, so there is an inclusion map
\[
i: I^{Q_{\bar n ,\bar n}}
C_*(\td{X}\times \td{Y}\times \td{X}\times \td{Y};F)
\to
I^R C_*(\td{X}\times \td{X}\times \td{Y}\times \td{Y};F).
\]
Consider the diagram
\[
\xymatrix{
I^{\bar n}C_*(\td{X};F)
\otimes
I^{\bar n}C_*(\td{X};F)
\otimes
I^{\bar n}C_*(\td{Y};F)
\otimes
I^{\bar n}C_*(\td{Y};F)
\ar[r]^-\times
\ar[dd]_\times
&
I^{\bar n}C_*(\td{X}\times \td{Y};F)
\otimes
I^{\bar n}C_*(\td{X}\times \td{Y};F)
\ar[d]_\times
\\
&
I^{Q_{\bar n ,\bar n}}
C_*(\td{X}\times \td{Y}\times \td{X}\times \td{Y};F)
\ar[d]_i
\\
I^{Q_{\bar n ,\bar n}}
C_*(\td{X}\times \td{X};F)
\otimes
I^{Q_{\bar n ,\bar n}}
C_*(\td{Y}\times \td{Y};F)
\ar[r]^-\times
&
I^R C_*(\td{X}\times \td{X}\times \td{Y}\times \td{Y};F).
}
\]
All of the cross products are quasi-isomorphisms,%
\footnote{For the lower horizontal arrow, this follows from the proof of
\cite[Theorem 5.4]{GBF20}; full details of this argument will be given in 
\cite{GregBook}.}
so the map $i$ is also a quasi-isomorphism.  

It therefore suffices to show that the composite of \eqref{r2} with 
$1\otimes i$ represents $(1\otimes i)_*c_{f\times g}$.  If $\chi_f$ and 
$\chi_g$ are representatives of
$c_f$ and $c_g$ then (by the definition of $\sW(f)$ and $\sW(g)$)
the composite of \eqref{r2} with $1\otimes i$ is $\Z/2$-equivariantly chain 
homotopic to the composite
\begin{multline}
\label{r3}
W\to W\otimes W
\xrightarrow{\chi_f\otimes \chi_g}
(F\otimes_{F[\pi]} I^{Q_{\bar n ,\bar n}}C_*(\td X\times \td X;F))
\otimes
(F\otimes_{F[\pi]} I^{Q_{\bar n ,\bar n}}C_*(\td Y\times \td Y;F))
\\
\to
F\otimes_{F[\pi\times \rho]}
( I^{Q_{\bar n ,\bar n}}C_*(\td X\times \td X;F)
\otimes
I^{Q_{\bar n ,\bar n}}C_*(\td Y\times \td Y;F))
\xrightarrow{1\otimes \times}
F\otimes_{F[\pi\times \rho]}
I^RC_* (\td{X}\times \td{X}\times \td{Y}\times\td{Y};F).
\end{multline}
By Definition \ref{j17}(ii), the composite \eqref{r3} is $\Z/2$ equivariantly
chain homotopic to the composite
\begin{multline}
\label{r4}
W\to
W\otimes W
\to
F\otimes F
\xrightarrow{\zeta_f\otimes\zeta_g}
(F\otimes_{F[\pi]}
I^{\bar 0}C_*(\td{X};F))
\otimes_F
(F\otimes_{F[\rho]}
I^{\bar 0}C_*(\td{Y};F))
\\
\cong
F\otimes_{F[\pi\times \rho]}
(I^{\bar 0}C_*(\td{X};F)
\otimes_F
I^{\bar 0}C_*(\td{Y};F))
\xrightarrow{1\otimes \times}
F\otimes_{F[\pi\times \rho]}
I^{\bar 0}C_*(\td{X}\times\td{Y};F)
\\
\xrightarrow{1\otimes d}
F\otimes_{F[\pi\times \rho]}
I^{Q_{\bar n,\bar n}}C_*(\td{X}\times\td{Y}\times\td{X}\times\td{Y};F)
\xrightarrow{1\otimes i}
F\otimes_{F[\pi\times \rho]}
I^RC_*(\td{X}\times\td{X}\times\td{Y}\times\td{Y};F).
\end{multline}
By the definition of $c_{f\times g}$ it now suffices to show that a piece of 
the composite \eqref{r4}, namely
\begin{multline*}
F\otimes F
\xrightarrow{\zeta_f\otimes\zeta_g}
(F\otimes_{F[\pi]}
I^{\bar 0}C_*(\td{X};F))
\otimes_F
(F\otimes_{F[\rho]}
I^{\bar 0}C_*(\td{Y};F))
\\
\cong
F\otimes_{F[\pi\times \rho]}
(I^{\bar 0}C_*(\td{X};F)
\otimes_F
I^{\bar 0}C_*(\td{Y};F))
\xrightarrow{1\otimes \times}
F\otimes_{F[\pi\times \rho]}
I^{\bar 0}C_*(\td{X}\times\td{Y};F)
\end{multline*}
is chain homotopic to the composite
\[
F\otimes F
\to 
F
\xrightarrow{\zeta_{f\otimes g}}
F\otimes_{F[\pi\times \rho]}
I^{\bar 0}C_*(\td{X}\times\td{Y};F),
\]
and this in turn is a straightforward consequence of the fact that the
fundamental class $\Gamma_{X\times Y}\in I^{\bar 0}C_*(X\times Y;F)$ is the
cross product $\Gamma_X\times \Gamma_Y$ (which follows from \cite[Corollary
5.16]{GBF25}).
\end{proof}

Next we need to recall some results from \cite{GBF25}.  Let $X$ and $Y$
be compact 
$n$-dimensional stratified pseudomanifolds, and let $X$ be oriented (recall
that an orientation of $X$ is just an orientation, in the usual sense, of the
top stratum of $X$).  

Let $g:Y\to X$ be a 
homeomorphism of the underlying topological spaces, {\it not} necessarily 
compatible with the stratifications. 
The stratification of $X$ induces a restratification 
$Y'$ of $Y$ (with $(Y')^i=g^{-1}(X^i)$) and the orientation of $X$ induces an
orientation of $Y'$.  By (\cite[Lemma
5.20]{GBF25}),  
this orientation of $Y'$ determines a unique orientation of $Y$.

A {\it stratified homotopy equivalence}
is a homotopy equivalence $f:X\to Y$ with homotopy inverse $g:Y\to X$ such that 
$f$, $g$, and the respective homotopies  from $fg$ to $\text{id}_Y$ and from 
$gf$ to $\text{id}_X$ all satisfy the condition that the image of each 
stratum of the domain is contained in a single stratum of the codomain with the 
same codimension. See \cite[Appendix A]{GBF25} for more details. 
By \cite[Corollary 
5.17]{GBF25}, if $f:X\to Y$ is a stratified homotopy
equivalence then the orientation of $X$ determines an orientation of $Y$.

\begin{theorem}
\label{j16}
Let $f: X\to B\pi$ be a global $F$-Witt map and let $Y$ be a compact stratified PL
pseudomanifold with no codimension one strata..
\begin{enumerate}
\item
Let $g:Y\to X$ be a (topological) homeomorphism 
and
give $Y$ the induced orientation. Then $f\circ g$ is a global $F$-Witt
map and 
$\sW(f\circ g)=\sW(f)$.
\item
Let $g:Y\to X$ be a stratified homotopy equivalence 
and give $Y$ the induced orientation.
Then $f\circ g$ is a global $F$-Witt map and 
$\sW(f\circ g)=\sW(f)$.
\end{enumerate}
\end{theorem}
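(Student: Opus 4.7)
The plan for both parts is to show that $g$ induces an $F[\pi]$-chain homotopy equivalence of intersection chain complexes on the induced covers that is compatible with every ingredient entering the construction of $\sW$ in Section \ref{j10}, so that the entire construction transfers and yields the same class in $L^n(F[\pi])$.

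First I would dispatch part (2). A stratified homotopy equivalence $g\colon Y\to X$ lifts canonically to a $\pi$-equivariant stratified homotopy equivalence $\tilde g\colon \tilde Y\to \tilde X$ between the covers induced by $f\circ g$ and $f$. Adapting the arguments of \cite[Appendix A]{GBF25} to the equivariant setting, $\tilde g$ induces an $F[\pi]$-chain homotopy equivalence $\tilde g_*\colon I^{\bar p}C_*(\tilde Y;F)\to I^{\bar p}C_*(\tilde X;F)$ for every perversity $\bar p$; in particular, this forces the natural map $I^{\bar m}H_*(\tilde Y;F)\to I^{\bar n}H_*(\tilde Y;F)$ to be an isomorphism, so $f\circ g$ is a global $F$-Witt map. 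By \cite[Corollary 5.17]{GBF25}, $\tilde g_*$ carries the class $b_Y$ of Definition \ref{j17}(i) to $b_X$, and it commutes with the geometric diagonal and the K\"unneth map $\times$ by naturality. Chasing this through Proposition \ref{j6} produces a homotopy equivalence of symmetric Poincar\'e complexes in the sense of \cite[Definition 1.6(ii)]{Ra92}, so $\sW(f\circ g)=\sW(f)$ by \cite[Proposition 1.13]{Ra92}.

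For part (1), $g$ need not respect stratifications, so the idea is to reduce to part (2) by restratification. I would first let $Y'$ be the restratification of $Y$ with $(Y')^i=g^{-1}(X^i)$; then $g\colon Y'\to X$ is a stratified homeomorphism and, in particular, a stratified homotopy equivalence, so part (2) yields $\sW(f\circ g\colon Y'\to B\pi)=\sW(f)$. It then remains to verify that the symmetric signature is invariant under restratification, i.e.\ that $\sW(f\circ g\colon Y\to B\pi)=\sW(f\circ g\colon Y'\to B\pi)$. For this I would invoke the restratification-invariance of intersection chains from \cite{GBF25} (which produces an $F[\pi]$-chain homotopy equivalence of the two intersection chain complexes), combined with the compatibility of induced orientations from \cite[Lemma 5.20]{GBF25}, to conclude that the fundamental classes $b_Y$ and $b_{Y'}$ correspond and hence the two constructions of $\sW$ agree.

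The hard part will be the careful verification that every piece of the Section \ref{j10} construction --- the fundamental class, the algebraic diagonal $\tilde d$ of Section \ref{S: universal cap}, the K\"unneth quasi-isomorphism $1\otimes\times$, and the slant-product pairing --- is compatible with $\tilde g_*$ and with restratification up to $\Z/2$-equivariant chain homotopy. Concretely, I expect the main technical chore to be producing a $\Z/2$-equivariant chain homotopy that identifies the class $c_{f\circ g}$ of Definition \ref{j17}(ii) on $Y$ (or $Y'$) with the pushforward of $c_f$ under the appropriate chain map, which requires checking commutativity of the diagram in the proof of Proposition \ref{j6} square by square. Once this naturality is in place, the equality in $L^n(F[\pi])$ is formal.
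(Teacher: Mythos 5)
Your treatment of part (2) is essentially the paper's proof: lift $g$ to a $\pi$-equivariant stratified homotopy equivalence, obtain an $F[\pi]$-chain homotopy equivalence compatible with the diagonal, cross product, and fundamental class, and finish with \cite[Proposition 1.13]{Ra92}. That is correct.

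The gap is in part (1), specifically in the sentence where you plan to ``invoke the restratification-invariance of intersection chains from \cite{GBF25}.'' No such black-box result exists there, and this step is in fact the entire content of the paper's proof of part (1). What \emph{is} available is the topological invariance of intersection homology for classical perversities (King \cite{Ki}), which gives isomorphisms of homology groups but not, by itself, an $F[\pi]$-chain homotopy equivalence between $I^{\bar n}C_*(\td Y;F)$ and $I^{\bar n}C_*(\td{Y'};F)$ that also respects the geometric diagonal, the K\"unneth quasi-isomorphism, and the fundamental class. The only mechanism King's theorem provides to compare two stratifications of the same underlying space is to push both into the intrinsic coarsest stratification $Z$, and the paper does exactly that. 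This introduces real technical problems that your outline does not see: (a) $Z$ is in general only a CS set, not a stratified pseudomanifold, so one needs an extension of the K\"unneth theorem of \cite{GBF20} to CS sets (the paper cites \cite{GregBook} for this); (b) one must show that $I^{\bar n}C_*(\td Z;F)$ is homotopy finite over $F[\pi]$ so that a symmetric Poincar\'e complex can be built on it; and (c) one must define a fundamental class $\Gamma_Z$ and verify $i_*\Gamma_{X'}=\Gamma_Z$, which the paper does by a local comparison over a Euclidean neighborhood contained in the top strata of all three stratifications. Without producing a common coarsening, the ``chain homotopy equivalence of the two intersection chain complexes'' you posit does not exist in the literature and cannot be checked for the compatibilities you correctly identify as necessary. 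So the strategy outline (reduce to a stratified homeomorphism plus restratification invariance) matches the paper, but the hard part --- actually proving restratification invariance of $\sW$, via the intrinsic stratification and the CS-set K\"unneth theorem --- is the missing idea.
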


\begin{proof}
Part 1. To see that $f\circ g$ is a global $F$-Witt map, note that 
$\tilde{g}$ is
a homeomorphism $\tilde{Y}\to\tilde{X}$ and use the fact that a homeomorphism
induces an isomorphism of intersection homology for all classical perversities
(\cite[Theorem 9]{Ki}).

To complete the proof we use more results from \cite{Ki}.  Let $Z$ denote the
intrinsic coarsest stratification of $X$ (\cite[page 150]{Ki}); note that $Z$
is a CS set and not necessarily a stratified pseudomanifold.   Now $\td{Z}$ is
the intrinsic coarsest stratification of $\td{X}$, so the map
$I^{\bar p}H_*(\td{X};F)\to
I^{\bar p}H_*(\td{Z};F)$ is an isomorphism for any classical perversity by
\cite[Theorem 9]{Ki}.  
The proof of Proposition
\ref{f1}.1 shows that $I^{\bar p}C_*(\td{Z};F)$ is chain homotopy equivalent
over $F[\pi]$ to a nonnegatively-graded chain complex of free $F[\pi]$ 
modules, so by \cite[Exercise IV.4.2]{HS} the map
$I^{\bar p}C_*(\td{X};F)\to
I^{\bar p}C_*(\td{Z};F)$
is a chain homotopy equivalence over $F[\pi]$.  In particular, $I^{\bar
p}C_*(\td{Z};F)$ is homotopy finite over $F[\pi]$.

Next define the fundamental class $\Gamma_Z$ of $Z$ to be the image in $I^{\bar 
0}H_*(Z;F)$ of $\Gamma_X$.
It is shown in \cite{GregBook} that the K\"unneth
theorem of \cite{GBF20} generalizes to CS sets, so we can use the method of
Section \ref{j10} to construct a symmetric Poincar\'e complex
\[
(I^{\bar n}C_*(\td{Z};F),\omega)
\]
(note that the conclusion of Theorem {T: universal duality} holds for $Z$
because it holds for $X$).
Let $(I^{\bar n}C_*(\td{X};F),\psi)$ be the representative of $\sW(f)$
constructed in Section \ref{j10}.
The map $I^{\bar n}C_*(\td{X};F)\to I^{\bar n}C_*(\td{Z};F)$ gives a
homotopy equivalence of symmetric Poincar\'e complexes
(\cite[Definition 1.6(ii)]{Ra92})
\[
(I^{\bar n}C_*(\td{X};F),\psi)
\to 
(I^{\bar n}C_*(\td{Z};F),\omega),
\]
and therefore (\cite[Proposition 1.13]{Ra92}) 
$(I^{\bar n}C_*(\td{Z};F),\omega)$ represents $\sW(f)$.

The homeomorphism $g$ determines a stratification of the underlying space of
$X$, which we will denote by $X'$.  It is clear that $\sW(f\circ g)$ is the
same as $\sW(f:X'\to B\pi)$.  The argument given above shows that 
$(I^{\bar n}C_*(\td{Z};F),\omega)$ represents $\sW(f:X'\to B\pi)$ (and hence
that $\sW(f\circ g)=\sW(f)$ as required) provided that
\begin{equation}
\label{r5}
i_*\Gamma_{X'}=\Gamma_Z,
\end{equation}
where $i$ is the map $X'\to Z$.
To verify \eqref{r5}, let $U$ be a Euclidean neighborhood  which is in the 
nonsingular strata of both $X$ and $X'$ (and hence also of $Z$). Let $x\in 
U$ and consider the following diagram: 
\[
\xymatrix{
I^{\bar 0}H_*(X';F)
\ar[r]^-{i_*}
\ar[d]
&
I^{\bar 0}H_*(Z;F)
\ar[d]
&
I^{\bar 0}H_*(X;F)
\ar[l]
\ar[d]
\\
I^{\bar 0}H_*(X',X'-\{x\};F)
\ar[r]^-{i_*}
&
I^{\bar 0}H_*(Z,Z-\{x\};F)
&
I^{\bar 0}H_*(X,X-\{x\};F)
\ar[l]
\\
I^{\bar 0}H_*(U,U-\{x\};F)
\ar[r]^=
\ar[u]_\cong
&
I^{\bar 0}H_*(U,U-\{x\};F)
\ar[u]_\cong
&
I^{\bar 0}H_*(U,U-\{x\};F)
\ar[l]_=
\ar[u]_\cong
}
\]
By \cite[Corollary 5.16]{GBF25}, to show that
$(i^*)^{-1}(\Gamma_Z)=\Gamma_{X'}$, it suffices to 
suffices to show that the image of $(i^*)^{-1}(\Gamma_Z)$ under the left-hand
vertical composite is the orientation class, and this follows from
commutativity of the diagram and the definition of $\Gamma_Z$.

Part 2. 
To see that $f\circ g$ is a global $F$-Witt map, note that $\tilde{g}$ is
a stratified homotopy equivalence $\tilde{Y}\to\tilde{X}$ and use the fact 
that a stratified homotopy equivalence
induces an isomorphism of intersection homology for all perversities
(\cite[Appendix A]{GBF25}).
Now let $(I^{\bar n}C_*(\td{Y};F),\psi)$ be a representative for
$\sW(f\circ g)$.  
By \cite[Proposition 2.1]{GBF3}, $g_*$
gives a homotopy equivalence of symmetric Poincar\'e complexes
from
$(I^{\bar n}C_*(\td{Y};F),\psi)$
to
$(I^{\bar n}C_*(\td{X};F),(g_*\otimes g_*)\psi)$, so
it suffices to show that
$(I^{\bar n}C_*(\td{X};F),(g_*\otimes g_*)\psi)$
is a representative for $\sW(f)$. 
For this in turn it suffices to show that the composite
\begin{multline*}
W\xrightarrow{\psi}
I^{\bar n}C_*(\td{Y};F)^t\otimes_{F[\pi]}
I^{\bar n}C_*(\td{Y};F)
\cong
F\otimes_{F[\pi]}
(I^{\bar n}C_*(\td{Y};F)\otimes_F I^{\bar n}C_*(\td{Y};F))
\\
\xrightarrow{1\otimes \times}
F\otimes_{F[\pi]}
(I^{Q_{\bar n ,\bar n}}C_*(\td{Y}\times \td{Y};F))
\xrightarrow{(g\times g)_*}
F\otimes_{F[\pi]}
(I^{Q_{\bar n ,\bar n}}C_*(\td{X}\times \td{X};F))
\end{multline*}
is a representative for the class $c_f$ defined in Definition
\ref{j17}(ii).
By the definition of $\sW(f\circ g)$ and naturality of the cross product,  
this composite is the same as
\begin{multline*}
W\to \Z\xrightarrow{\zeta_Y}
F\otimes_{F[\pi]} I^{\bar 0}C_*(\td{Y};F)
\xrightarrow{1\otimes g_*}
F\otimes_{F[\pi]} I^{\bar 0}C_*(\td{X};F)
\xrightarrow{1\otimes d}
F\otimes_{F[\pi]} I^{Q_{\bar n ,\bar n}}C_*(\td{X}\times \td{X};F),
\end{multline*}
and now it suffices to observe that, because $Y$ was given the induced
orientation, the 
composite 
\[
\Z\xrightarrow{\zeta_Y}
F\otimes_{F[\pi]} I^{\bar 0}C_*(\td{Y};F)
\xrightarrow{1\otimes g_*}
F\otimes_{F[\pi]} I^{\bar 0}C_*(\td{X};F)
\]
is a representative for the class $b_X$ of Definition \ref{j17}(i).
\end{proof}

Next we prove invariance of $\sW$ under bordism of global $F$-Witt maps.

\begin{theorem}
\label{j18}
Let $f:X\to B\pi$ be a global $F$-$\partial$-Witt map.
Let $Y$ be the boundary of $X$ with the induced orientation. 
Then $\sW(f|_Y)=0$.
\end{theorem}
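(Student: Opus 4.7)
The plan is to exhibit a symmetric Poincar\'e pair $((D,\Phi),(C,\phi))$ over $F[\pi]$ in which $(C,\phi)$ is a representative of $\sW(f|_Y)$; by Definition \ref{j+} this forces $\sW(f|_Y)=0$.

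First I would set $C=I^{\bar n}C_*(\td Y;F)$ and $D=I^{\bar n}C_*(\td X;F)$, both homotopy finite over $F[\pi]$ by Proposition \ref{j7}, take $g\colon C\to D$ to be the inclusion induced by $\bd X\hra X$, and take $\phi$ to be the representative of $\sW(f|_Y)$ produced in Section \ref{j10}, arising from a cycle $\zeta_Y\in F\otimes_{F[\pi]}I^{\bar 0}C_{n-1}(\td Y;F)$ representing $\Gamma_Y$ (which makes sense since by Proposition \ref{m3} we know $f|_Y$ is a global $F$-Witt map).

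Next I would build $\Phi$ from the relative fundamental class $\Gamma_X\in I^{\bar 0}H_n(X,\bd X;F)$. Because the connecting homomorphism sends $\Gamma_X$ to $\Gamma_Y$ (cf.\ the proof of Theorem \ref{T: univ lef}), one can pick a chain $\eta_X\in F\otimes_{F[\pi]}I^{\bar 0}C_n(\td X;F)$ whose image in the relative complex represents $\Gamma_X$ and whose boundary equals $g_*\zeta_Y$. Paralleling Definition \ref{j17}(ii), feed $\eta_X$ through the composite
\[
W\xrightarrow{\varepsilon}\Z\xrightarrow{\eta_X}F\otimes_{F[\pi]}I^{\bar 0}C_*(\td X;F)\xrightarrow{1\otimes d}F\otimes_{F[\pi]}I^{Q_{\bar n,\bar n}}C_*(\td X\times\td X;F)
\]
and apply the $\Upsilon$-type chain of quasi-isomorphisms of Section \ref{j10} to obtain a $\Z/2$-equivariant degree-$n$ map $\Phi\colon W\to D^t\otimes_{F[\pi]}D$; a standard $\Z/2$-equivariant acyclic models argument, of the same flavor as the one producing $\psi$ in the closed case, provides the required lift. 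The identity $\bd\eta_X=g_*\zeta_Y$ then yields the defining relation $\bd\Phi-(-1)^n\Phi\bd=(g\otimes g)_*\phi$ of a symmetric Poincar\'e pair.

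Finally I would verify condition (iii) of Definition \ref{r1}: slant product with $\Phi_*(\iota)$ induces an isomorphism $H^*(\Hom_{F[\pi]}(D,F[\pi]))\to H_{n-*}(\text{Cone}(g))$. The mapping cone $\text{Cone}(g)$ is chain homotopy equivalent to $I^{\bar n}C_*(\td X,\td Y;F)$ via the long exact sequence. Through the $\Upsilon$-formalism the slant product with $\Phi_*(\iota)$ is identified with cap product with $\Gamma_X$, which by Universal Lefschetz Duality (Theorem \ref{T: univ lef}) is an isomorphism $I_{\bar n}\bar H^*(\td X;F)\to I^{\bar m}H_{n-*}(\td X,\td Y;F)$; composing with the $\bar m\to\bar n$ isomorphism supplied by the global $F$-$\bd$-Witt hypothesis (via Proposition \ref{j7} and \cite[Exercise IV.4.2]{HS}, as in the proof of Proposition \ref{m3}) yields the required isomorphism. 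The main obstacle I expect is constructing $\Phi$ as a genuine $\Z/2$-equivariant degree-$n$ map realizing the boundary relation on the nose rather than merely up to homotopy; this will be handled by an equivariant acyclic models argument, mirroring the one used implicitly in the construction of $\psi$ in Section \ref{j10}.
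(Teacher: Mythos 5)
Your plan is essentially the paper's: build a symmetric Poincar\'e pair $\bigl((I^{\bar n}C_*(\td X;F),\Phi),(I^{\bar n}C_*(\td Y;F),\phi)\bigr)$ from the relative fundamental class using the machinery of Section \ref{j10}, check the duality condition with Theorem \ref{T: univ lef} together with the $\partial$-Witt hypothesis and Proposition \ref{j7}, and conclude $\sW(f|_Y)=0$ by the bordism relation in Definition~\ref{j+}. The one place where you diverge, and where there is a genuine gap, is in how you propose to produce $\Phi$ realizing condition (ii) of Definition \ref{r1} exactly.

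You invoke an equivariant acyclic-models argument ``mirroring the one used implicitly in the construction of $\psi$ in Section \ref{j10}''; but no acyclic models is used there --- $\psi$ is simply a cycle representing the homology class $\Upsilon^{-1}(c_f)$, available because $\Upsilon$ is an isomorphism. If you instead try to push $\eta_X$ and $\zeta_Y$ separately through chain-homotopy inverses $\mu_X$, $\mu_Y$ of the two cross-product quasi-isomorphisms, you only know $\mu_X\circ(g\times g)_*\simeq (g\otimes g)_*\circ\mu_Y$ up to homotopy, so $\bd\Phi-(-1)^n\Phi\bd$ equals $(g\otimes g)_*\phi$ only modulo a homotopy correction, and one must then repair $\Phi$ by hand. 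The paper sidesteps this entirely by working at the level of mapping cones: the cycle $(\eta,\theta)$ in the mapping cone of $F\otimes_{F[\pi]}I^{\bar 0}C_*(\td Y)\to F\otimes_{F[\pi]}I^{\bar 0}C_*(\td X)$ produces a relative class $c$ in the homology of the mapping cone of the $\Hom_{\Z[\Z/2]}(W,-)$ complexes, a single relative isomorphism $\Upsilon$ (well-defined on mapping cones by naturality of the cross product) is inverted, and a cycle representing $\Upsilon^{-1}(c)$ is precisely a pair $(\Phi,\phi)$ satisfying $\bd\phi=0$ and $\bd\Phi=\pm\,$(image of $\phi$) on the nose --- which is condition (ii). That $\phi$ represents $\sW(f|_Y)$ then follows from the fact that $\theta$ represents the fundamental class of $Y$ (\cite[Proposition 7.9]{GBF25}), rather than from your starting choice of $\zeta_Y$. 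Once you reformulate your construction of $\Phi$ through the relative $\Upsilon$, the rest of your verification matches the paper's argument.
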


\begin{proof}
The idea of the proof is to use the method of Section \ref{j10} to construct a
symmetric Poincar\'e pair (Definition \ref{r1}) from the pair $(X,Y)$. 

It's convenient to introduce some notation: given chain complexes $C$ and $D$
and a chain map $f:C\to D$, we write $H_*(D,C)$ for the homology of the mapping
cone $Cf$ (this agrees with the usual meaning of $H_*(D,C)$ when $f$ is a
monomorphism).  An element of $H_*(D,C)$ is represented by a pair $(d,c)$ with 
$\bd c=0$ and $\bd d=-f(c)$.

By Proposition \ref{n3}.3 and the five lemma, the map
\[
H_n(F\otimes_{F[\pi]} I^{\bar 0}C_*(\td{X}),
F\otimes_{F[\pi]} I^{\bar 0}C_*(\td{Y}))
\to
I^{\bar 0}H_n(X,Y;F)
\]
is an isomorphism.
Let $b$ map to the fundamental class of $X$ (\cite[Section 7.2]{GBF25}) under 
this isomorphism, and let $(\eta,\theta)$ be a cycle 
representing $b$. Let 
\[
c\in H_n(\Hom_{\Z[\Z/2]}(W,F\otimes_{F[\pi]} I^{\bar 0}C_*(\td{X}\times \td{X})),
\Hom_{\Z[\Z/2]}(W,F\otimes_{F[\pi]} I^{\bar 0}C_*(\td{Y}\times \td{Y})))
\]
be the class represented by the pair of maps
\[
W\xrightarrow{\varepsilon}
\Z\xrightarrow{\eta}
F\otimes_{F[\pi]} I^{\bar 0}C_*(\td{X})
\xrightarrow{1\otimes d}
F\otimes_{F[\pi]} I^{Q_{\bar n ,\bar n}}C_*(\td{X}\times \td{X})
\]
and
\[
W\xrightarrow{\varepsilon}
\Z\xrightarrow{\theta}
F\otimes_{F[\pi]} I^{\bar 0}C_*(\td{Y})
\xrightarrow{1\otimes d}
F\otimes_{F[\pi]} I^{Q_{\bar n ,\bar n}}C_*(\td{Y}\times \td{Y}).
\]
As in Section \ref{j10}, there is an isomorphism
\begin{multline*}
\Upsilon:
H_*(\Hom_{\Z[\Z/2]}(W,(I^{\bar n}C_*(\td X;F))^t\otimes_{F[\pi]} I^{\bar
n}C_*(\td X;F)),
\\
\Hom_{\Z[\Z/2]}(W,(I^{\bar n}C_*(\td Y;F))^t\otimes_{F[\pi]} I^{\bar
n}C_*(\td
Y;F)))
\\
\xrightarrow{\cong}
H_*(\Hom_{\Z[\Z/2]}(W,F\otimes_{F[\pi]} I^{Q_{\bar n ,\bar n}}C_*(\td X\times
\td X;F)),
\\
\Hom_{\Z[\Z/2]}(W,F\otimes_{F[\pi]} I^{Q_{\bar n ,\bar n}}C_*(\td Y\times
\td Y;F))).
\end{multline*}
Let 
\[
\psi:W\to (I^{\bar n}C_*(\td X;F))^t\otimes_{F[\pi]} I^{\bar
n}C_*(\td X;F)
\]
be a $\Z/2$-equivariant chain map that represents $\Upsilon^{-1}(c)$.  The
proof of Proposition \ref{j6} adapts (using Theorem \ref{T: univ lef}) to show that 
\[
((I^{\bar n}C_*(\td X;F),\psi),(I^{\bar n}C_*(\td Y;F),\partial\psi))
\]
is a symmetric Poincar\'e pair, and since $(I^{\bar n}C_*(\td 
Y;F),\partial\psi)$ is a representative for $\sW(f|_Y)$ (by \cite[Proposition
7.9]{GBF25}), we see that 
$\sW(f|_Y)=0$.
\end{proof}

Finally, recall from \cite[Theorem 1.2]{ALMP-combo} the signature index class 
\[
{\mathrm{Ind}}(\tilde{\eth}_{\mathrm{sign}})\in K_*(C_f^*\pi) 
\]
associated to a smoothly stratified $\mathbb Q$-Witt space $X$ with a map $f:X\to B\pi$.  Also
recall from \cite[Section 11.2]{ALMP-combo} the map
\[
\nu\beta_{\mathbb{Q}}:L^*({\mathbb{Q}}\pi)\to K_*(C_f^*\pi).
\]

\begin{proposition}
${\mathrm{Ind}}(\tilde{\eth}_{\mathrm{sign}})$ 
and 
$\nu\beta_{\mathbb{Q}}(\sW(f))$ 
are equal
in $K_*(C_f^*\pi)\otimes \mathbb Q$.
\end{proposition}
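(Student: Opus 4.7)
The plan is to invoke the general rationality principle credited to Weinberger, which is explicitly cited earlier in the introduction: any two constructions of the symmetric signature for Witt spaces that are (i) bordism invariant and (ii) agree with Mi\v{s}\v{c}enko's symmetric signature for smooth manifolds must agree rationally in $L^*(\mathbb{Q}\pi)$. The strategy is therefore to exhibit an $L$-theoretic class in $L^n(\mathbb{Q}\pi)\otimes\mathbb Q$ coming from \cite{ALMP-combo} whose image under $\nu\beta_{\mathbb{Q}}$ is ${\mathrm{Ind}}(\tilde{\eth}_{\mathrm{sign}})$, verify that this class satisfies (i) and (ii), and then equate it rationally with $\sW(f)$.

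First, I would identify the class $\sigma^*_{\mathrm{ALMP}}(f)\in L^n(\mathbb{Q}\pi)$ constructed in \cite{ALMP-combo} and record that (by \cite[Section 11.2]{ALMP-combo}) its image under $\nu\beta_{\mathbb{Q}}$ is exactly ${\mathrm{Ind}}(\tilde{\eth}_{\mathrm{sign}})$. Next I would check that $\sigma^*_{\mathrm{ALMP}}(f)$ satisfies the hypotheses of the Weinberger principle: ALMP establish bordism invariance (their Proposition 11.1 argument), and their class reduces to Mi\v{s}\v{c}enko's symmetric signature on smooth manifolds. For the other side, Proposition \ref{j12} shows that $\sW$ agrees with Mi\v{s}\v{c}enko's construction on compact oriented manifolds, and Theorem \ref{j18} shows that $\sW$ is a bordism invariant for global $F$-Witt maps with $F=\mathbb{Q}$.

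With both hypotheses of Weinberger's principle satisfied by $\sW(f)$ and $\sigma^*_{\mathrm{ALMP}}(f)$, the principle yields
\[
\sW(f)=\sigma^*_{\mathrm{ALMP}}(f) \quad \text{in } L^n(\mathbb{Q}\pi)\otimes \mathbb{Q}.
\]
Applying the homomorphism $\nu\beta_{\mathbb{Q}}\otimes \mathbb{Q}:L^n(\mathbb{Q}\pi)\otimes\mathbb{Q}\to K_n(C_f^*\pi)\otimes\mathbb{Q}$ to both sides and using the identification of $\nu\beta_{\mathbb{Q}}(\sigma^*_{\mathrm{ALMP}}(f))$ with ${\mathrm{Ind}}(\tilde{\eth}_{\mathrm{sign}})$ from \cite{ALMP-combo} gives the desired equality.

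I expect the main obstacle to be a bookkeeping issue rather than a conceptual one: namely, verifying that the hypotheses of Weinberger's principle, as stated for closed Witt spaces with a reference map to $B\pi$, apply cleanly to our setting of global $\mathbb{Q}$-Witt maps. In particular, one must ensure that ``bordism'' on both sides means the same thing (bordism of maps $f:X\to B\pi$ with $X$ a Witt pseudomanifold) and that the comparison with Mi\v{s}\v{c}enko's definition is matched normalization-wise (signs, fundamental class conventions). Once these compatibilities are confirmed, the proof reduces to the short diagram-level argument above.
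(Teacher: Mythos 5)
Your overall strategy (reduce to smooth manifolds using the rational surjectivity of $\Omega^{SO}_*(B\pi)\to\Omega^{Witt,s}_*(B\pi)$, which is the content of Weinberger's principle) is the same as the paper's, but there is a concrete gap in the $L$-theoretic detour you propose. You want to ``identify the class $\sigma^*_{\mathrm{ALMP}}(f)\in L^n(\mathbb{Q}\pi)$ constructed in \cite{ALMP-combo}'' and compare it with $\sW(f)$ in $L$-theory before pushing forward by $\nu\beta_{\mathbb{Q}}$. But \cite{ALMP-combo} does not construct an $L$-theoretic symmetric signature; their invariant $\mathrm{Ind}(\tilde{\eth}_{\mathrm{sign}})$ is an analytic object living directly in $K_*(C_f^*\pi)$, and it is not a priori in the image of $\nu\beta_{\mathbb{Q}}:L^*(\mathbb{Q}\pi)\to K_*(C_f^*\pi)$. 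So the object you are trying to apply Weinberger's $L$-theoretic principle to does not exist, and step (ii) of your plan has no referent.

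The fix, which is what the paper does, is to run the Weinberger-type argument directly in $K$-theory: using Proposition \ref{j14} and Theorem \ref{j18}, $\nu\beta_{\mathbb{Q}}\circ\sW$ descends to a homomorphism $\Omega^{Witt,s}_*(B\pi)\otimes\mathbb{Q}\to K_*(C_f^*\pi)\otimes\mathbb{Q}$; ALMP show the same for $\mathrm{Ind}(\tilde{\eth}_{\mathrm{sign}})$; and by the rational surjectivity of $\Omega^{SO}_*(B\pi)\to\Omega^{Witt,s}_*(B\pi)$ it suffices to compare the two on smooth manifolds. Even at that stage, the comparison is not a tautology: one needs Proposition \ref{j12} to reduce $\sW$ to Mi\v{s}\v{c}enko's $\sigma^*$, and then a theorem of Kasparov and Mi\v{s}\v{c}enko identifying $\nu\beta_{\mathbb{Q}}(\sigma^*(f))$ with the signature index class rationally for smooth closed manifolds. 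This last analytic input, which your proposal does not invoke, is what bridges the $L$-theory side to the $K$-theory side; without it, agreement on manifolds is not automatic.
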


\begin{proof}
The proof is similar to the proof of \cite[Proposition 11.1]{ALMP-combo}.
Let $\Omega^{Witt,s}_*(B\pi)$ be the bordism group over $B\pi$ of
smoothly stratified oriented Witt spaces \cite[see Sections 2.1 and 7]{ALMP-combo}.
Proposition \ref{j14} and Theorem 
\ref{j18} show that $\sW$ is a homomorphism from $\Omega^{Witt,s}_*(B\pi)$ to
$L^*({\mathbb{Q}}\pi)$, and hence we obtain a homomorphism
\[
\nu\beta_{\mathrm{Q}}\sW\otimes \mathbb Q:
\Omega^{Witt,s}_*(B\pi)\otimes \mathbb Q\to K_*(C_f^*\pi)\otimes \mathbb Q.
\]
The natural map $\Omega^{SO}_*(B\pi)\to 
\Omega^{Witt,s}_*(B\pi)$ is a rational surjection by  \cite[Proposition 
7.3]{ALMP-combo} (see also \cite{Cu92, BCS} for the analogous result for PL Witt bordism),
and so 
it suffices to check 
that ${\mathrm{Ind}}(\tilde{\eth}_{\mathrm{sign}})$ 
and 
$\nu\beta_{\mathrm{Q}}(\sW(f))$ agree when $X$ is a smooth manifold. This in
turn follows from Proposition \ref{j12} and a result of Kasparov and 
Mi\v{s}\v{c}enko \cite{Ka95} 
that identifies the rational symmetric signature and the signature index 
class for smooth closed manifolds. 
\end{proof}

\section{Technical facts about intersection chains}\label{S: covers}

In this section, we prove some results that were needed in previous 
sections and in \cite{GBF25}.  

Throughout this section we fix an $n$-dimensional 
$\bd$-stratified 
pseudomanifold $X$ and a regular cover $p:\td X\to X$. We write $\pi$ for the 
group of covering translations.  For any subset $S$ of $X$ we write $\td{S}$
for $p^{-1}(S)$.  

Recall that an open set $U$ in $X$ is called {\it evenly covered} if the
restriction of the covering map $p$ to $U$ is trivial.

We also fix a perversity $\bar p$.

\subsection{A colimit formula for intersection chains}\label{S: 6.1}

Let $\U$ be a covering of $X$ by open sets. 
Let $\cC$ be the category of all finite intersections of sets in
$\U$, with inclusions as the morphisms. 
Let $A$ be an open subset of $X$.
Fix a ring $R$ and an $R$-module $M$.

Our main result in this subsection is

\begin{proposition}
\label{n3}
\begin{enumerate}
\item
The canonical map
\[
\dlim_{V\in \cC}\, I^{\bar p}C_* (V,V\cap A;M)\to I^{\bar p}C_* (X,A;M)
\]
is a chain homotopy equivalence \red{over $R$} .

\item
The canonical map
\[
\dlim_{V\in \cC}\, I^{\bar p}C_* (\td{V},\td{V}\cap\td{A};M)\to 
I^{\bar p}C_* (\td{X},\td{A};M)
\]
is a chain homotopy equivalence over $R[\pi]$.

\item
The projection
\[
R\otimes_{R[\pi]} I^{\bar p}C_*(\td{X},\td A;M)
\to
I^{\bar p}C_*(X,A;M)
\]
is a chain homotopy equivalence over $R$.

\end{enumerate}
\end{proposition}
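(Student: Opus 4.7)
The three parts share a common subdivision template adapting the classical small-chains theorem to intersection chains. For \emph{Part 1}, the plan is first to identify $\dlim_{V\in\cC} I^{\bar p}C_*(V,V\cap A;M)$ with the subcomplex $I^{\bar p}C_*^\U(X,A;M)\subseteq I^{\bar p}C_*(X,A;M)$ of chains whose simplices are each supported in some $U\in\U$ (this identification holds because a chain lies in $I^{\bar p}C_*(V_1)\cap I^{\bar p}C_*(V_2)$ iff every simplex is supported in $V_1\cap V_2$, so the colimit is literally this union). The canonical map is then an inclusion, and a homotopy inverse will be built from iterated barycentric subdivision. The content is that both the subdivision operator $\mathrm{sd}$ and the natural prism homotopy $T$ with $\partial T+T\partial=\mathrm{id}-\mathrm{sd}$ preserve $\bar p$-allowability: every simplex appearing in $\mathrm{sd}(\sigma)$ or $T(\sigma)$ has image contained in $\sigma(\Delta^n)$, so its preimage of any stratum $Z$ inherits the dimension bound required by allowability. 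A Lebesgue-number argument then shows that for any chain $\xi$ some $\mathrm{sd}^N(\xi)$ lies in $I^{\bar p}C_*^\U$, yielding the chain homotopy equivalence.

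For \emph{Part 2}, the plan is to run the argument of Part 1 on $\td X$ with the open cover $\{\td U:U\in\U\}$. The resulting chain homotopy equivalence is automatically $R[\pi]$-linear because $\mathrm{sd}$ and $T$ are defined by pre-composition with fixed affine maps on the standard simplex and therefore commute with post-composition by deck transformations. For \emph{Part 3}, the plan is to specialize by choosing $\U$ to consist of evenly covered open sets. Then every $V\in\cC$, being an open subset of an evenly covered set, is itself evenly covered; a choice of section produces a $\pi$-equivariant homeomorphism $\td V\cong \pi\times V$, which induces an $R[\pi]$-module isomorphism
\[
I^{\bar p}C_*(\td V,\td V\cap\td A;M)\cong R[\pi]\otimes_R I^{\bar p}C_*(V,V\cap A;M),
\]
so that $R\otimes_{R[\pi]}I^{\bar p}C_*(\td V,\td V\cap\td A;M)\cong I^{\bar p}C_*(V,V\cap A;M)$. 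Applying the right exact functor $R\otimes_{R[\pi]}-$ to the equivalence of Part 2, using that it commutes with colimits, and comparing to Part 1 then delivers Part 3.

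The only real content, and the main obstacle, is the verification that subdivision and the prism homotopy preserve $\bar p$-allowability in the generality used here---arbitrary perversities and the ``stratified coefficients'' convention of \cite{GBF26,GBF23}, under which certain chains supported in the singular locus are killed. This is a local check on how an affine refinement of $\Delta^n$ interacts with the dimension conditions on $\sigma^{-1}(Z)$ and with the stratified-coefficients quotient, but once it is in hand the rest of the proof is purely formal. The fact that subdivision strictly shrinks the images of simplices is what makes both the allowability check and the passage to $\U$-small chains go through simultaneously.
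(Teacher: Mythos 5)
Your overall architecture (identify the colimit with the $\U$-small subcomplex, prove the latter is equivalent to the full complex via subdivision, then deduce Parts 2 and 3 by equivariance and evenly-covered sets) matches the paper's proof structure, and your Parts 2 and 3 are essentially what the paper does. However, you have misplaced where the real difficulty lies, and the step you dismiss as trivial has a genuine gap.

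You assert that $\dlim_{V\in\cC} I^{\bar p}C_*(V,V\cap A;M)$ ``is literally this union'' $I^{\bar p}_\U C_*(X,A;M)$, justified by the observation that $I^{\bar p}C_*(V_1)\cap I^{\bar p}C_*(V_2)=I^{\bar p}C_*(V_1\cap V_2)$. That pairwise statement is correct, but it does not imply that the colimit maps injectively onto the union. For ordinary singular chains this works because $C_*(V)$ is free on the simplices with support in $V$, so any relation $\sum_i \xi_i=0$ can be broken simplex-by-simplex into cancellations supported in pairwise intersections. For intersection chains the situation is fundamentally different: $I^{\bar p}C_*(V)$ is a subcomplex cut out by the condition that both the chain and its boundary consist of allowable simplices, and this is not a simplex-by-simplex condition --- it involves cancellations in the boundary. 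If $\sum_{i=1}^m\xi_i=0$ with $\xi_i\in I^{\bar p}C_*(U_i)$, the naive piece of $\xi_i$ supported near $U_1$ will generally not be allowable, because its boundary may contain non-allowable faces that were cancelling against other terms in $\xi_i$. Establishing that the relation can nevertheless be rewritten through allowable chains in the pairwise intersections is precisely the content of the paper's Proposition \ref{n1} and Lemma \ref{n2}, whose proof is a fairly delicate controlled-subdivision argument (it is the main technical contribution of Section \ref{S: covers}). Your proposal has no substitute for it.

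A secondary imprecision: the claim that $\mathrm{sd}$ and the prism homotopy $T$ preserve $\bar p$-allowability ``because every simplex appearing has image contained in $\sigma(\Delta^n)$'' is not a valid argument. Allowability is a dimension bound on $\sigma^{-1}(\text{stratum})$ relative to the skeleta of the source simplex, and containment of images does not transfer such skeleton conditions to the affine pieces of the subdivision; this requires a separate argument, which is \cite[Lemma 2.6]{GBF10}. The boundary condition for $T(\xi)$ is also not addressed. In the paper this entire subdivision-equivalence step is outsourced to \cite[Proposition 2.9]{GBF10}, which is fine; the point is that it is a theorem, not a routine observation, and in any case it is not where the new difficulty lies. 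The gap you must close is the colimit identification.
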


\begin{remark}
It's possible that the projection in part 3 is actually an isomorphism, as
it is for ordinary singular chains.
\end{remark}

For the proof of Proposition \ref{n3} we need a preliminary result which may 
be of interest in its own right.
Let $I^{\bar p}_\U C_*(X,A;M)$ denote the submodule
\[
\sum_{U\in \U}\, I^{\bar p}C_*(U,U\cap A;M)
\]
of $I^{\bar p}C_*(X,A;M)$.

\begin{proposition}
\label{n1}
The canonical map
\[
\dlim_{V\in \cC}\, I^{\bar p}C_* (V,V\cap A;M)\to 
I^{\bar p}_\U C_*(X,A;M)
\]
is an isomorphism.
\end{proposition}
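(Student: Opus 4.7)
The plan is to realize the abstract colimit as the submodule $\sum_{V\in\cC} I^{\bar p}C_*(V,V\cap A;M)$ of $I^{\bar p}C_*(X,A;M)$; since every $V\in\cC$ is contained in some $U\in\U$, this sum coincides with $I^{\bar p}_\U C_*(X,A;M)$. As a preparatory step I would verify that for any open $W\subseteq X$, the canonical map $I^{\bar p}C_*(W,W\cap A;M)\to I^{\bar p}C_*(X,A;M)$ is injective: if a representative chain supported in $W$ represents zero modulo $A$, its underlying chain lies in $I^{\bar p}C_*(A;M)$, and being supported in $W$, it lies in $I^{\bar p}C_*(W\cap A;M)$. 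Each term of the colimit diagram thus embeds in $I^{\bar p}C_*(X,A;M)$, and surjectivity of the comparison map onto $I^{\bar p}_\U C_*(X,A;M)$ is immediate since each $U\in\U$ is itself an object of $\cC$.

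The main work is injectivity. The crucial feature is that $\cC$ is closed under finite intersection, so given $V_1,V_2\in\cC$ one has $V_1\cap V_2\in\cC$ with inclusions $V_1\cap V_2\hookrightarrow V_i$. I would establish the identity of submodules
\[
I^{\bar p}C_*(V_1,V_1\cap A;M)\cap I^{\bar p}C_*(V_2,V_2\cap A;M)=I^{\bar p}C_*(V_1\cap V_2,V_1\cap V_2\cap A;M)
\]
inside $I^{\bar p}C_*(X,A;M)$. Given lifts $\tilde\xi_i\in I^{\bar p}C_*(V_i;M)$ of a common element of the left side, the difference $\tilde\xi_1-\tilde\xi_2$ lies in $I^{\bar p}C_*(A;M)$; separating the simplices of $\tilde\xi_1$ that also occur in $\tilde\xi_2$ (whose images lie in $V_1\cap V_2$) from those that do not (whose coefficients persist in $\tilde\xi_1-\tilde\xi_2$, forcing their images into $A$) yields a decomposition $\tilde\xi_1=\tilde\xi_1^{12}+\tilde\xi_1^A$, with $\tilde\xi_1^{12}$ supported in $V_1\cap V_2$ and $\tilde\xi_1^A$ supported in $V_1\cap A$; the chain $\tilde\xi_1^{12}$ is the desired representative. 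Once this identity is in hand, the colimit identifies with the sum in $I^{\bar p}C_*(X,A;M)$ by a standard argument: an apparent relation $(V_1,\xi_1)=(V_2,\xi_2)$ in the target is witnessed in the colimit by routing through $V_1\cap V_2$, and an arbitrary relation $\sum_i(V_i,\xi_i)=0$ is reduced to zero by regrouping contributions simplex-by-simplex into terms indexed by $W_\sigma:=\bigcap\{V_i:\sigma\in\mathrm{supp}(\tilde\xi_i)\}\in\cC$ and observing that each resulting term lies in $I^{\bar p}C_*(W_\sigma\cap A;M)$.

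The main obstacle is verifying that the chain-level decomposition $\tilde\xi_1=\tilde\xi_1^{12}+\tilde\xi_1^A$ (and the simplex-level regrouping in the last sentence) produces genuine intersection chains, i.e., chains whose boundaries are also allowable. I expect this is handled by the stratified-coefficient formalism of \cite{GBF26, GBF23} adopted throughout the paper, in which chains supported entirely in the singular set are quotiented out, absorbing the boundary subtleties and enabling these simplex-level manipulations to yield bona fide elements of the intersection chain modules.
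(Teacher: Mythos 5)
Your setup is sound and, in outline, close to the paper's: you realize the colimit as the internal sum $I^{\bar p}_{\mathcal U}C_*(X,A;M)$, get surjectivity for free, and reduce everything to understanding the kernel of the map out of the direct sum, i.e.\ to resolving relations $\sum_i \xi_i\equiv 0$ into relations supported on intersections. But there is a genuine gap at precisely the point you flag, and the resolution you propose for it does not work.

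The problem is that the chain-level decompositions you use --- extracting from $\tilde\xi_1$ the subchain $\tilde\xi_1^{12}$ of simplices shared with $\tilde\xi_2$, and likewise the simplex-by-simplex regrouping into terms indexed by $W_\sigma$ --- do not produce intersection chains. Each resulting subchain is allowable (its simplices are allowable, being simplices of allowable chains), but its \emph{boundary} need not be allowable: a non-allowable face $\tau$ of some $\sigma$ belonging to $\tilde\xi_1^{12}$ may have been cancelled in $\partial\tilde\xi_1$ only by a face of some simplex that you have routed into $\tilde\xi_1^A$, and once the two pieces are separated the cancellation is lost. This is exactly the issue the paper calls out explicitly (right after equation \eqref{n4} in the proof of Lemma \ref{n2}: the naive splitting $\theta_i$ ``will not be an intersection chain in general because its boundary can contain non-allowable simplices that cancel out in $\xi_i$''). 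Your proposed fix --- appealing to the stratified-coefficient formalism --- does not repair this. Stratified coefficients only kill simplices whose image lies \emph{entirely} in the singular set; the problematic boundary faces $\tau$ merely meet the singular set too deeply for allowability without being contained in it, so their coefficients survive. The paper's actual proof addresses the obstruction with a substantial subdivision argument: the proof of Lemma \ref{n2} builds, inductively over skeleta, carefully ordered subdivisions $K_\tau$ of the standard simplices, with a ``no awkward simplex on the frontier'' condition (Property \eqref{n9}), and then replaces the naive $\theta_i$ by $\eta_i = \theta_i + \sum_\sigma c_{\xi_i}(\sigma)\bar\sigma$ using subdivided pieces $\bar\sigma$ supported in $|\sigma|\cap U_1$. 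That subdivision machinery is the real content of the proposition and cannot be dispensed with by a coefficient-system remark. You would need to import something like it (or the paper's Lemma \ref{n2} outright, which handles $m$-tuples at once and makes the pairwise-intersection identity unnecessary) to make the injectivity step go through.
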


The proof of Proposition \ref{n1} will be given in Subsection \ref{n14}.

\begin{proof}[Proof of Proposition \ref{n3}.]
Proposition 2.9 of \cite{GBF10} states that the inclusion
\[
I^{\bar p}_\U C_*(X;M)
\hookrightarrow
I^{\bar p} C_*(X;M)
\]
is a chain homotopy equivalence.
The same proof shows that the inclusion
\[
I^{\bar p}_\U C_*(X,A;M)
\hookrightarrow
I^{\bar p} C_*(X,A;M)
\]
is a chain homotopy equivalence, and part 1 follows from this and Proposition
\ref{n1}.

A minor modification of the proof in \cite{GBF10} shows that the inclusion
\[
I^{\bar p}_\U C_* (\td{X},\td{A};M)
\hookrightarrow
I^{\bar p}C_* (\td{X},\td{A};M)
\]
is a chain homotopy equivalence over $R[\pi]$, and part 2 follows from this
and Proposition \ref{n1} (applied to the pair $(\td X,\td A))$.

For part 3 we assume that the open sets in $\U$ are evenly covered.
With this assumption the projection
\begin{multline*}
R\otimes_{R[\pi]} \left(
\dlim_{V\in \cC}\, I^{\bar p}C_* (\td{V},\td{V}\cap\td{A};M)
\right)
=
\dlim_{V\in \cC}\, R\otimes_{R[\pi]}I^{\bar p}C_* (\td{V},\td{V}\cap\td{A};M)
\\
\to
\dlim_{V\in \cC}\, I^{\bar p}C_* (V,V\cap A;M)
\end{multline*}
is an isomorphism.  Now consider the diagram
\[
\xymatrix{
R\otimes_{R[\pi]} \bigl(
\dlim_{V\in \cC}\, I^{\bar p}C_* (\td{V},\td{V}\cap\td{A};M)
\bigr)
\ar[r]^-\cong
\ar[d]
&
\dlim_{V\in \cC}\, I^{\bar p}C_* (V,V\cap A;M)
\ar[d]
\\
R\otimes_{R[\pi]} I^{\bar p}C_*(\td X,\td A;M)
\ar[r]
&
I^{\bar p}C_*(X,A;M)
}
\]
The right vertical arrow is a chain homotopy equivalence over $R$ by part 1.  
The left vertical arrow is a chain homotopy equivalence over $R$ by part 2.
Hence the lower horizontal arrow is a chain homotopy equivalence over $R$ as 
required.
\end{proof}

\subsection{Freeness and flatness}

In this section we prove two results.  Let $A$ be an open subset
of $X$ and let $F$ be a field.

First we have

\begin{proposition}
\label{f1}
\begin{enumerate}
\item 
If $X$ has a finite covering by evenly covered open sets (in particular, if 
$X$ is compact) then $I^{\bar p}C_*(\td{X},\td{A};F)$ is chain 
homotopy equivalent over $F[\pi]$ to a nonnegatively-graded chain complex of 
free $F[\pi]$-modules.
\item
For all $X$, $I^{\bar p}C_*(\td{X},\td{A};F)$ is chain homotopy
equivalent over $F[\pi]$ to a nonnegatively-graded chain complex of flat 
$F[\pi]$-modules.
\end{enumerate}
\end{proposition}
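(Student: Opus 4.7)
The plan is to apply Proposition~\ref{n3}.2 to reduce both parts to analyzing a colimit indexed by the category $\cC$ of finite intersections of sets in a cover $\U$ of $X$ by evenly covered open sets. The key observation is that for any evenly covered $V \subseteq X$, a choice of trivialization $\td V \cong \pi \times V$ respects the inherited stratifications and yields an isomorphism
\[
I^{\bar p}C_*(\td V, \td V \cap \td A; F) \cong F[\pi] \otimes_F I^{\bar p}C_*(V, V \cap A; F),
\]
exhibiting the left-hand side as a nonnegatively graded chain complex of free $F[\pi]$-modules. By Proposition~\ref{n3}.2, $I^{\bar p}C_*(\td X, \td A; F)$ is chain homotopy equivalent over $F[\pi]$ to $\dlim_{V \in \cC} I^{\bar p}C_*(\td V, \td V \cap \td A; F)$.

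For the first part, take $\U = \{U_1, \ldots, U_n\}$ to be a finite cover of $X$ by evenly covered sets, so that $\cC$ is finite. The colimit can then be computed as the totalization of a bounded \v{C}ech-type double complex whose bidegree $(p,q)$ entry is a finite direct sum
\[
\bigoplus_{i_0 < \cdots < i_p} I^{\bar p}C_q(\td U_{i_0} \cap \cdots \cap \td U_{i_p}, \td U_{i_0} \cap \cdots \cap \td U_{i_p} \cap \td A; F)
\]
of free $F[\pi]$-modules. Since the totalization of a bounded double complex whose entries are free in each degree is itself free in each degree and nonnegatively graded, this finishes the first part.

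For the second part, I would repeat the construction with an arbitrary open cover $\U$ by evenly covered sets, which exists since $p$ is a covering map. Now $\cC$ may be infinite, but it is the filtered union of its finite subcategories $\cC_0 \subseteq \cC$, and the argument of the first part applied to each $\cC_0$ shows that the colimit over $\cC_0$ is chain homotopy equivalent to a chain complex of free $F[\pi]$-modules. The filtered colimit of these recovers $\dlim_{V \in \cC}$, whose $n$-th term is thereby exhibited as a filtered colimit of free $F[\pi]$-modules; such a colimit is flat by Lazard's theorem.

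The principal technical difficulty is in the second part: one must verify that the chain homotopy equivalences produced in the first part can be chosen compatibly along the filtered system of finite subcategories, so that their colimit is again a chain homotopy equivalence. This is also where the conclusion of freeness must be weakened to flatness, since an infinite filtered colimit of free $F[\pi]$-modules need not itself be free.
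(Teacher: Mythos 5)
Your reduction to Proposition \ref{n3}.2 and the observation that evenly covered $V$ gives $I^{\bar p}C_*(\td V,\td V\cap\td A;F)\cong F[\pi]\otimes_F I^{\bar p}C_*(V,V\cap A;F)$, free over $F[\pi]$, are both correct and are the right starting point; this is also where the paper begins. However, both parts of your argument have genuine gaps.

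For part 1, the assertion that the colimit $\dlim_{V\in\cC} I^{\bar p}C_*(\td V,\td V\cap\td A;F)$ ``can be computed as the totalization of a bounded \v{C}ech-type double complex'' is false. The colimit over $\cC$ is a coequalizer, namely the cokernel of the map from the $p=1$ row to the $p=0$ row of the \v{C}ech bicomplex; the totalization involves all $p\geq 0$. These two chain complexes are in general different, and while they are both quasi-isomorphic to $I^{\bar p}C_*(\td X,\td A;F)$, a quasi-isomorphism is not automatically a chain homotopy equivalence over $F[\pi]$. You would either need to establish an $F[\pi]$-chain homotopy equivalence from the totalization directly to $I^{\bar p}C_*(\td X,\td A;F)$ (bypassing the colimit entirely, e.g.\ by verifying that the augmented \v{C}ech complex is exact and then appealing to both complexes being free and bounded below) or, as the paper does, show that the colimit itself is free. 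The paper's Lemma \ref{n15} does the latter: it proves by an induction on ``complexity'' that the colimit over a finite intersection poset $\cD$ decomposes as $\bigoplus_{V\in\cD} B(V)$, where $B(V)$ is the cokernel of the latching map $A(V)\to I^{\bar p}C_*(\td V;F)$ and is free because $V$ and all its predecessors are evenly covered and $F$ is a field. That splitting is the key idea you are missing.

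For part 2, you correctly identify the obstacle---one cannot naively pass chain homotopy equivalences through a filtered colimit, since the homotopy inverses and homotopies for the various finite subcategories need not be compatible---but you do not resolve it, and the resolution is not a routine verification. The paper sidesteps it entirely: Lemma \ref{n15} gives that each finite-stage colimit is literally free (not merely chain homotopy equivalent to a free complex), so the full colimit is a directed colimit of genuinely free modules in each degree, hence flat by Lazard's theorem. In other words, the freeness of the colimit, rather than of some homotopy-equivalent replacement, is exactly what makes the filtered-colimit argument for flatness go through without any compatibility bookkeeping. Your proposal for part 2 therefore does not constitute a proof as stated.
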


For the second result we let $\pi$ act by the diagonal action on
$I^{\bar p}C_*(\td X,\td A;F)\otimes_F I^{\bar q}C_*(\td X,\td A;F)$ and on 
$I^{Q_{\bar p,\bar q}}C_*(\td X\times \td X,\td A\times \td X\cup \td X\times
\td A;F)$. 

\begin{proposition}\label{P: tensor flat}
The cross product 
\[
I^{\bar p}C_*(\td X,\td A;F)\otimes_F I^{\bar q}C_*(\td X,\td A;F)
\to
I^{Q_{\bar p,\bar q}}C_*(\td X\times \td X,\td A\times \td X\cup \td X\times
\td A;F)
\]
induces a quasi-isomorphism
\begin{multline*}
F\otimes_{F[\pi]} 
(
I^{\bar p}C_*(\td X,\td A;F)\otimes_F I^{\bar q}C_*(\td X,\td A;F)
)
\to 
F\otimes_{F[\pi]} I^{Q_{\bar p,\bar q}}C_*(\td X\times \td X,\td A\times \td X\cup \td X\times
\td A;F).
\end{multline*}
\end{proposition}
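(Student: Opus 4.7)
The plan is to combine the K\"unneth theorem \cite[Theorem 3.1]{GBF25} with a flatness argument so that the cross product---which is already a quasi-isomorphism of chain complexes---remains one after tensoring with $F$ over $F[\pi]$. The cross product is $F[\pi]$-equivariant with respect to the diagonal action on source and target, and is a quasi-isomorphism by \cite[Theorem 3.1]{GBF25}. Hence, by standard homological algebra, it suffices to show that both the source and the target of the cross product are chain homotopy equivalent over $F[\pi]$ to bounded-below chain complexes of flat $F[\pi]$-modules: any quasi-isomorphism between such complexes remains a quasi-isomorphism after tensoring with an arbitrary $F[\pi]$-module, since the naive tensor then computes the derived tensor product.

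For the source, I would apply Proposition \ref{f1}.2 to each of $I^{\bar p}C_*(\td X,\td A;F)$ and $I^{\bar q}C_*(\td X,\td A;F)$ separately, obtaining chain homotopy equivalences over $F[\pi]$ to bounded-below complexes of flat $F[\pi]$-modules. The tensor product over $F$ (with diagonal $\pi$-action) of two flat $F[\pi]$-modules is again flat: the key algebraic observation is that $F[\pi]\otimes_F F[\pi]$ with the diagonal action is isomorphic, via the twist $g\otimes h\mapsto g\otimes g^{-1}h$, to $F[\pi]\otimes_F F[\pi]_{\mathrm{triv}}$ (where $\pi$ acts only on the first factor), and is therefore a free $F[\pi]$-module. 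The general flat case follows by writing flat modules as filtered colimits of finitely generated projectives (Lazard).

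For the target, I would apply Proposition \ref{f1}.2 to the regular $(\pi\times\pi)$-cover $\td X\times \td X\to X\times X$ with the product stratification and the product perversity $Q_{\bar p,\bar q}$, obtaining a chain homotopy equivalence over $F[\pi\times\pi]$ from $I^{Q_{\bar p,\bar q}}C_*(\td X\times \td X,\td A\times \td X\cup \td X\times \td A;F)$ to a bounded-below complex of flat $F[\pi\times\pi]$-modules. Restricting scalars along the diagonal inclusion $F[\pi]\hookrightarrow F[\pi\times\pi]$ then yields a complex flat over $F[\pi]$, because $F[\pi\times\pi]$ itself is free as an $F[\pi]$-module under the diagonal action (by the same twist as above), so every flat $F[\pi\times\pi]$-module is flat as an $F[\pi]$-module with the restricted diagonal action. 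The hardest part will be verifying that Proposition \ref{f1}.2 indeed applies to the product $X\times X$: when $X$ has nonempty boundary, $X\times X$ acquires corners at $\bd X\times \bd X$ and is not literally a $\bd$-stratified pseudomanifold in the sense of Definition \ref{def boundary}. The proof of Proposition \ref{f1} is essentially local and cover-theoretic, so it should extend to this corner setting with minor modifications, but making this precise---or reformulating Proposition \ref{f1} in a sufficiently general setting---is the principal technical obstacle to address.
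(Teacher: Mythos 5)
Your proposal is correct and follows the paper's own proof essentially step for step: flat replacement over $F[\pi\times\pi]$ via Proposition \ref{f1}.2, the observation that $F[\pi\times\pi]$ is free over $F[\pi]$ under the diagonal inclusion (your twist isomorphism makes explicit what the paper merely asserts), and the K\"unneth quasi-isomorphism together with the fact that a quasi-isomorphism of bounded-below flat complexes survives $F\otimes_{F[\pi]}-$ (the paper cites Weibel, Theorem 5.6.4, for this). The corner concern you raise for $X\times X$ when $\partial X\neq\emptyset$ is not separately addressed in the paper either, but is innocuous because the cover-theoretic arguments behind Proposition \ref{f1} (Proposition \ref{n3}.2 and Lemma \ref{n15}) do not use the global $\partial$-pseudomanifold structure.
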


For both results we will give the proofs when $A=\emptyset$; the same proofs
work for the general cases. Note that although only an absolute version version of the K\"unneth theorem is provided in \cite{GBF20}, it is extended to a relative version in \cite[Appendix B]{GBF25}. 

For the proof of Proposition \ref{f1} we need a lemma.

\begin{lemma}
\label{n15}
Let $\V$ be a finite collection of evenly covered open sets in $X$.
Let $\cD$ be the 
category of intersections of sets of
$\V$, with inclusions as the morphisms.  Then
\[
\dlim_{V\in \cD}\, I^{\bar p}C_* (\td V;F)
\]
is free over $F[\pi]$.
\end{lemma}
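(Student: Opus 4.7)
The plan is to identify the direct limit with a concrete subcomplex via Proposition \ref{n1}, and then exhibit a free $F[\pi]$-basis using the freeness of the $\pi$-action on singular simplices in $\td X$.

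First, apply Proposition \ref{n1} to the space $\bigcup_{V\in\V}\td V$ with the cover $\td \V := \{\td V : V \in \V\}$. Since preimage commutes with intersection, the category of intersections of $\td \V$ is identified with $\cD$ via $V \mapsto \td V$, and Proposition \ref{n1} identifies $\dlim_{V\in\cD} I^{\bar p}C_*(\td V;F)$ $F[\pi]$-equivariantly with
\[
\sum_{V\in\V} I^{\bar p}C_*(\td V;F) \subseteq I^{\bar p}C_*(\td X;F).
\]

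Second, each $V \in \cD$ is evenly covered (intersections of evenly covered open sets are evenly covered), so $\td V$ decomposes as a disjoint union of $|\pi|$ sheets permuted freely by $\pi$; in particular, $\pi$ acts freely on $\td X$ and hence on the set of singular simplices in $\td X$. In the stratified-coefficient formulation of intersection chains used by the paper (\cite{GBF23, GBF26}), at each degree $k$ the chain group $I^{\bar p}C_k(\td X;F)$ admits a natural $F$-basis given by equivalence classes of $\bar p$-allowable singular $k$-simplices $\sigma:\Delta^k \to \td X$ with $\sigma(\Delta^k) \not\subseteq \td X^{n-1}$. The sub-$F$-space $\sum_{V\in\V}I^{\bar p}C_k(\td V;F)$ is then spanned by those classes whose simplex representative has image in some $\td V$, a $\pi$-invariant subset on which $\pi$ acts freely. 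Choosing one representative per $\pi$-orbit yields a free $F[\pi]$-basis of the subcomplex in each degree, and $\pi$-equivariance of the boundary extends this to a free structure on the chain complex over $F[\pi]$.

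The principal technical obstacle is verifying that $I^{\bar p}C_k$ admits the claimed simplex $F$-basis in the stratified-coefficient setting; this depends on the convention whereby boundary terms that would fail $\bar p$-allowability end up supported in the singular locus, where they vanish after passing to the stratified-coefficient quotient.
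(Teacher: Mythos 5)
Your reduction via Proposition \ref{n1} to the submodule $\sum_{V\in\V} I^{\bar p}C_*(\td V;F)$ is sound, but the argument breaks at the claim that $I^{\bar p}C_k(\td X;F)$ admits an $F$-basis of $\bar p$-allowable singular simplices. Even with stratified coefficients, $I^{\bar p}C_k$ is cut out from the $F$-span of allowable simplices by the additional requirement that $\partial \xi$ be allowable, and this requirement is not automatic: an allowable simplex $\sigma$ can have a face $\tau$ that is neither $\bar p$-allowable nor contained in $X^{n-1}$, in which case $\tau$ does \emph{not} vanish under the stratified-coefficient convention and $\sigma$ by itself is not an intersection chain. For instance, take $X=cS^1$ with cone point $x_0$ (codimension two) and $\bar p(\{x_0\})=0$: a singular $2$-simplex with exactly one vertex at $x_0$ and otherwise mapping to the regular part is allowable, but each of the two edges meeting $x_0$ has nonempty preimage of $x_0$ while being required to have empty preimage, so neither is allowable, and neither has image contained in $\{x_0\}$. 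So $I^{\bar p}C_k$ is a proper subspace of the span of allowable simplices with no simplex basis, your ``principal technical obstacle'' is a genuine falsehood rather than a convention, and the free $F[\pi]$-basis you propose does not exist. (More generally, a $\pi$-stable subspace of a module with a free $\pi$-basis need not be $F[\pi]$-free; freeness has to come from somewhere.)

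The paper's proof avoids any basis of $I^{\bar p}C_*$ by simplices. For each $V\in\cD$ it sets $B(V)$ to be the cokernel of the map from $\dlim_{W\subsetneq V}I^{\bar p}C_*(\td W;F)$ into $I^{\bar p}C_*(\td V;F)$. Because each object of $\cD_{\leq V}$ is evenly covered, the identification $\td V\cong \pi\times V$ restricts compatibly to all $\td W$, so the whole diagram over $\cD_{\leq V}$ has the form $F[\pi]\otimes_F(\text{an $F$-diagram})$; hence $B(V)\cong F[\pi]\otimes_F B_1(V)$ for an $F$-vector space $B_1(V)$, and therefore $B(V)$ is $F[\pi]$-free because every $F$-vector space is $F$-free. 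Since each $B(V)$ is free, the short exact sequences $0\to A(V)\to I^{\bar p}C_*(\td V;F)\to B(V)\to 0$ split, and an induction on the number of subobjects shows that the functor $V\mapsto I^{\bar p}C_*(\td V;F)$ on $\cD$ is naturally isomorphic to $V\mapsto\bigoplus_{W\subset V}B(W)$; passing to the colimit gives $\bigoplus_{V\in\cD}B(V)$, which is free. The essential mechanism is the sheet decomposition of $\td V$ over each evenly covered $V$, not a simplexwise description of intersection chains.
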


\begin{proof}
For each $V$ in $\cD$ let $A(V)$ be the image of the map
\[
\dlim I^{\bar p}C_* (\td{W};F)\to I^{\bar p}C_* (\td{V};F),
\]
where the colimit is taken over $W\in \cD$ with $W\subsetneq V$, and let 
$B(V)$ be the cokernel of $A(V)\to I^{\bar p}C_* (\td{V};F)$.
$B(V)$ is free over $F[\pi]$ because $V$ and all $W$ are evenly covered
and $F$ is a field.

Let $G$ be the functor on $\cD$  which takes $V$ to $I^{\bar p}C_* (\td 
V;F)$, and let $H$ be the functor which takes $V$ to $\oplus_{W\subset V}
B(W)$.  We claim that $G$ is naturally isomorphic to $H$, and hence that
\[
\dlim_{V\in \cD}\, I^{\bar p}C_* (\td V;F)
\cong
\bigoplus_{V\in \cD} B(V),
\]
which immediately implies the result.

To verify the claim, first note that, because $B(V)$ is free over $F[\pi]$,
the short exact sequence
\[
0\to A(V)\to I^{\bar p}C_* (\td{V};F)\to B(V)\to 0
\]
of $F[\pi]$-modules is split for all $V$.  Now define the {\it complexity} of 
$V$ to be the 
number of objects of $\cD$ which are contained in $V$, and let $\cD_i$ be the 
full subcategory of $\cD$ with objects of complexity $\leq i$.  It is easy to
see by induction on $i$ that the restrictions of $G$ and $H$ to $\cD_i$ are
naturally isomorphic, which in particular gives the claim.
\end{proof}

\begin{proof}[Proof of Proposition \ref{f1}]
Part 1 is immediate from Lemma \ref{n15} and Proposition \ref{n3}.2.

For part 2, let $\U$ be a collection of evenly covered open sets whose union is
$X$, and let $\cC$ be the
category of finite intersections of sets in $\U$.  For each finite
subset $\V$ of $\U$
let $\cD(\V)$ be the category of intersections of sets in $\V$.  Then
\[
\dlim_{V\in \cC}\, I^{\bar p}C_* (\td{V};F)
=\dlim_\V \,\dlim_{V\in \cD{(\V)}}\, I^{\bar p}C_* (\td{V};F)
\]
and the result follows from Proposition \ref{n3}.2, Lemma \ref{n15}, and the 
fact that a directed colimit of flat modules is flat.
\end{proof}

\begin{proof}[Proof of Proposition \ref{P: tensor flat}]
Let $C$ and $D$ denote
$I^{\bar p}C_*(\td X;F)\otimes_F I^{\bar q}C_*(\td X;F)$ 
and
$I^{Q_{\bar p,\bar q}}C_*(\td X\times \td X;F)$
respectively.  
Let $R$ denote $F[\pi\times\pi]$, which is isomorphic to $F[\pi]\otimes
F[\pi]$. 
By Proposition \ref{f1}.2, we have chain homotopy equivalences
$C\to C'$ and $D\to D'$ 
over $R$ (and hence
over $F[\pi]$), where $C'$ and $D'$ are nonnegatively-graded and flat over 
$R$.  But $R$ is flat (in fact free) over $F[\pi]$, and hence $C'$ and $D'$ 
are flat over
$F[\pi]$ (because the functor $C'\otimes_{F[\pi]} \,-$ is naturally 
isomorphic to $C'\otimes_R\, R\otimes_{F[\pi]}\,-$, and similarly for $D'$). 
Now the map
\[
C\xrightarrow{\times} D
\]
is a quasi-isomorphism by the K\"unneth theorem of \cite{GBF20}, and hence the
composite
\[
C'\to C\to D\to D'
\]
induces a quasi-isomorphism 
\[
F\otimes_{F[\pi]} C'\to F\otimes_{F[\pi]} D'
\]
by \cite[Theorem 5.6.4]{WEIB}.  The maps
$F\otimes_{F[\pi]} C'\to 
F\otimes_{F[\pi]} C$ and $F\otimes_{F[\pi]} D\to
F\otimes_{F[\pi]} D'$ are quasi-isomorphisms because $F\otimes_{F[\pi]}$
preserves chain homotopy equivalences over $F[\pi]$, so we conclude that
$F\otimes_{F[\pi]} C\to
F\otimes_{F[\pi]} D$ is a quasi-isomorphism as required.
\end{proof}

\subsection{Proof of Proposition \ref{j7}}
\label{j9}
Recall (for example from \cite[Exercise IV.4.2]{HS}) that if two 
bounded-below chain complexes are free over $F[\pi]$ and quasi-isomorphic 
over $F[\pi]$ then they are chain homotopy equivalent over $F[\pi]$.
Combining this with
Proposition \ref{f1}.1, it suffices to show that $I^{\bar p}C_*(\td X;F)$ is
quasi-isomorphic over $F[\pi]$ to a finite $F[\pi]$ chain complex.

This in turn is immediate from the following lemma.
Let $X'$ denote $X-\bd X$.  

\begin{lemma}
\label{n17}
\begin{enumerate}
\item
The map
\[
I^{\bar p}H_*(\widetilde{X'};F)
\to
I^{\bar p}H_*(\td X;F)
\]
induced by the inclusion is an isomorphism.
\item
$I^{\bar p}C_*(\widetilde{X'};F)$ is quasi-isomorphic over $F[\pi]$ to a 
finite $F[\pi]$ chain complex.
\end{enumerate}
\end{lemma}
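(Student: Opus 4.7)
The plan is to prove the two parts of Lemma~\ref{n17} with related techniques: part 1 is an excision and K\"unneth argument in the collar, and part 2 combines an analogous collar argument with a PL triangulation of a compact core of $X'$.

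For part 1, I will show $I^{\bar p}H_*(\td X,\widetilde{X'})=0$, after which the long exact sequence of the pair $(\td X,\widetilde{X'})$ gives the claim. Letting $N\cong\partial X\times[0,1)$ be an open collar of $\partial X$, excision of the closed subset $p^{-1}(X-N)\subset\widetilde{X'}$ identifies $I^{\bar p}H_*(\td X,\widetilde{X'})$ with $I^{\bar p}H_*(\td N,\widetilde{N'})$, where $\widetilde{N'}=\td N-p^{-1}(\partial X)$. Since $[0,1)$ is simply connected, the restricted covering trivializes in the interval direction, giving $\td N\cong p^{-1}(\partial X)\times[0,1)$ and $\widetilde{N'}\cong p^{-1}(\partial X)\times(0,1)$. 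The relative intersection homology K\"unneth theorem (the relative version of \cite{GBF20} discussed in \cite[Appendix B]{GBF25}) then identifies $I^{\bar p}H_*(\td N,\widetilde{N'})$ with $I^{\bar p}H_*(p^{-1}(\partial X))\otimes_F H_*([0,1),(0,1))$, and the second tensor factor vanishes because $(0,1)\hookrightarrow[0,1)$ is a homotopy equivalence of contractible spaces.

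For part 2, let $Z=X-(\partial X\times[0,1/2))$, a compact PL $\partial$-stratified subspace of $X'$. An analogous excision-plus-K\"unneth argument, this time excising $p^{-1}(X-\partial X\times[0,3/4))$ from the pair $(\widetilde{X'},\td Z)$ and applying K\"unneth to $(p^{-1}(\partial X)\times(0,3/4),\, p^{-1}(\partial X)\times[1/2,3/4))$, shows that the inclusion $\td Z\hookrightarrow\widetilde{X'}$ induces an isomorphism on intersection homology. Because the singular intersection chain complex with stratified coefficients is free over $F[\pi]$ in each degree (the set of allowable singular simplices not contained in a singular stratum is $\pi$-invariant and $\pi$ acts freely on it), this quasi-isomorphism is a chain homotopy equivalence over $F[\pi]$. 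It therefore suffices to show $I^{\bar p}C_*(\td Z;F)$ is quasi-isomorphic over $F[\pi]$ to a finite $F[\pi]$-complex. Taking a PL triangulation of $Z$ compatible with the stratification and lifting it to $\td Z$ produces a PL intersection chain complex that is free and finitely generated over $F[\pi]$ in each degree and concentrated in degrees $0$ through $n$, hence a finite $F[\pi]$-complex; its natural inclusion into $I^{\bar p}C_*(\td Z;F)$ is a $\pi$-equivariant quasi-isomorphism by the standard PL-versus-singular comparison.

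The main technical obstacles will be (a) verifying that the relative K\"unneth theorem of \cite[Appendix B]{GBF25} applies to the product pairs arising here with the relevant perversity data, and (b) confirming that the PL-versus-singular chain-level comparison is genuinely $\pi$-equivariant; the latter should follow from the naturality of the standard chain-approximation constructions involved.
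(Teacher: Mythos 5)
Your Part 1 is correct, though more elaborate than necessary: since $\partial X$ has an open collar in $X$, the inclusion $\widetilde{X'}\hookrightarrow\td X$ is already a stratified homotopy equivalence, and the isomorphism follows immediately from the invariance of intersection homology under stratified homotopy equivalence (\cite[Appendix A]{GBF25}). The excision-plus-K\"unneth argument you give should also work, but it reproves something that is available in one step.

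Part 2 has a genuine gap, and it is the central one. Twice you assert that an intersection chain complex on $\td Z$ is free over $F[\pi]$ in each degree --- once for the singular complex (to upgrade the quasi-isomorphism $I^{\bar p}C_*(\td Z;F)\to I^{\bar p}C_*(\widetilde{X'};F)$ to a chain homotopy equivalence over $F[\pi]$), and once for the simplicial complex on a lifted triangulation (to conclude it is a finite $F[\pi]$ chain complex in the sense of Definition \ref{n16}). The justification you offer --- $\pi$ acts freely on the set of allowable simplices not contained in a singular stratum --- shows only that the module of \emph{allowable} chains is free over $F[\pi]$. The intersection chain group $I^{\bar p}C_i$ is the \emph{submodule} of allowable $i$-chains whose boundary is also allowable; it is the kernel of the composite $A_i\xrightarrow{\partial} C_{i-1}\to C_{i-1}/A_{i-1}$, and submodules of free $F[\pi]$-modules need not be free when $\pi$ is nontrivial (think of $F[\pi]=\F_p[\Z/p]$). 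So the chain complex you produce from the lifted triangulation, while finitely generated over $F[\pi]$ in each degree and bounded, has not been shown to be free, and therefore has not been shown to be a finite $F[\pi]$ chain complex. The same difficulty undermines the chain-homotopy-equivalence step (though that step can instead be rescued by appealing to Proposition \ref{f1}.1, which gives a chain homotopy equivalence to a complex of free modules, and then invoking \cite[Exercise IV.4.2]{HS}).

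This is where your route and the paper's diverge in substance. The paper avoids the freeness problem entirely by never trying to exhibit a finite free complex directly. Instead, it covers $X'$ by the open stars of its finitely many vertices, observes that the cover $p$ is trivial over each star $\St(s)$ (since $\bSt(s)$ is contractible), so that $I^{\bar p}C_*(\widetilde{\St(s)};F)\cong F[\pi]\otimes_F I^{\bar p}C_*(\St(s);F)$, and then reduces to showing $I^{\bar p}H_*(\St(s);F)$ is a finitely generated $F$-vector space (Lemma \ref{n18}); over the field $F$, a chain complex with finitely generated homology is chain homotopy equivalent to a finite complex. The pieces are then assembled by induction via Mayer--Vietoris and Lemma \ref{n19} on homotopy pushouts. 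If you want to salvage your compact-core approach, you would need a separate argument that the bounded, finitely generated complex you construct is quasi-isomorphic to a \emph{free} finitely generated complex --- in effect, that the Wall finiteness obstruction vanishes --- and that does not come for free.
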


\begin{remark}
The reason that $X'$ plays a special role is that we will need to use the
relation between intersection homology and the Deligne sheaf, and this relation
is not known for $\bd$-stratified pseudomanifolds with nonempty boundary.
\end{remark}

Before continuing we need to recall some definitions.  Let $K$ be a simplicial
complex.  A subcomplex $L$ of $K$ is {\it full} if every simplex whose 
vertices are in $L$ is in $L$.  Let $s$ be a simplex of $K$.
The {\it closed star} of
$s$ is the union of all the simplices containing it; this will
be denoted $\bSt(s)$.  The {\it open star} of $s$ is the interior of
$\bSt(s)$; this will be denoted $\St(s)$.  

Fix a triangulation of
$X$ with the property that each skeleton of $X$ is a full subcomplex.

For the proof of Lemma \ref{n17} we need two other lemmas, whose proofs we
defer for a moment.

\begin{lemma}
\label{n18}
Let $s$ be a simplex of $X$ which is contained in $X'$. 
Then 
$I^{\bar p}C_*(\widetilde{\St(s)};F)$ is homotopy finite
over $F[\pi]$.
\end{lemma}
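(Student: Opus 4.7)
The plan combines trivialization of the cover with a reduction to a compact link via the PL distinguished neighborhood structure.

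First I would observe that $\St(s)$ is contractible---it deformation retracts to any interior point of $s$---and hence simply connected. Consequently the restriction of the regular covering $p:\td X\to X$ to $\St(s)$ is trivial, and a choice of trivialization yields an $F[\pi]$-equivariant isomorphism
\[
I^{\bar p}C_*(\widetilde{\St(s)};F)\;\cong\; F[\pi]\otimes_F I^{\bar p}C_*(\St(s);F),
\]
with $\pi$ acting by left multiplication on the $F[\pi]$-factor. Because $F[\pi]$ is free (hence flat) as a right $F$-module, the functor $F[\pi]\otimes_F(-)$ preserves chain homotopy equivalences and carries a finite free $F$-complex to a finite free $F[\pi]$-complex. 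Thus it suffices to show that $I^{\bar p}C_*(\St(s);F)$ is chain homotopy equivalent over $F$ to a bounded complex of finite-dimensional $F$-vector spaces.

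Since $s\subset X'$ and the triangulation has full skeleta, the standard local PL structure gives a stratum-preserving homeomorphism $\St(s)\cong \operatorname{int}(s)\times c^{\circ}(L)$, where $L$ is the compact PL link of $s$ (a stratified pseudomanifold of dimension $n-\dim s-1$) and $c^{\circ}(L)$ is the open cone. Projection to the cone factor is a stratified homotopy equivalence (with homotopy inverse given by inclusion at any chosen basepoint of $\operatorname{int}(s)$), so by stratified homotopy invariance of intersection homology (\cite[Appendix A]{GBF25}), the problem reduces to showing $I^{\bar p}C_*(c^{\circ}(L);F)$ is chain homotopy equivalent over $F$ to a finite $F$-complex.

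Finally, since $L$ is a compact PL stratified pseudomanifold, its simplicial intersection chain complex is manifestly a bounded complex of finite-dimensional $F$-vector spaces and is quasi-isomorphic to the singular intersection chain complex $I^{\bar p}C_*(L;F)$ by the standard PL-to-singular comparison; over a field, any such quasi-isomorphism of bounded-below complexes is a chain homotopy equivalence. The cone formula then transports the required finiteness from $L$ to $c^{\circ}(L)$ (the homology is either that of $L$ below a cutoff or zero above), and chain homotopy equivalence of a complex of $F$-vector spaces to its homology completes the reduction. The main technical step to verify with care is the distinguished neighborhood identification $\St(s)\cong \operatorname{int}(s)\times c^{\circ}(L)$ in the PL stratified setting (together with invoking the PL--singular comparison for intersection chains); once these standard ingredients are in place, the remainder is routine homological algebra over the field $F$.
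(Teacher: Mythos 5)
The first step of your argument—trivializing the cover over the contractible set $\St(s)$ and thereby reducing to showing that $I^{\bar p}C_*(\St(s);F)$ is homotopy finite over $F$—coincides with the paper's. After that, however, your route diverges from the paper's, and the divergence contains a genuine gap.

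You assert a stratum-preserving identification $\St(s)\cong \operatorname{int}(s)\times c^{\circ}(L)$ with $L=\Lk(s)$ a compact stratified pseudomanifold, and you then apply the cone formula to $c^{\circ}(L)$. The problem is that the stratification of $\St(s)$ induced from $X$ does \emph{not}, in general, give $c^{\circ}(L)$ the cone stratification. Strata of $X$ are unions of open simplices (because skeleta are full subcomplexes), and the stratum $S$ of $X$ containing $\operatorname{int}(s)$ may also contain $\operatorname{int}(\sigma)$ for various $\sigma\supsetneq s$. Under your identification, this means the ``cone point locus'' $\operatorname{int}(s)\times\{v\}$ of $\operatorname{int}(s)\times c^{\circ}(L)$ is not a separate stratum: it sits inside a larger stratum that extends radially outward. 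A concrete example is a $0$-simplex $s$ lying on a $1$-dimensional singular stratum of a $2$-dimensional $X$; the cone point is then an interior point of that $1$-stratum, not its own $0$-stratum. Since the cone formula for intersection homology is stated for $cL$ with the genuine cone stratification (cone point as the deepest stratum), it is not applicable to $c^{\circ}(L)$ with the induced stratification, and the appeal to it breaks down. (Refining the induced stratification to the cone stratification and invoking topological invariance only works for classical perversities, whereas the lemma is needed for general $\bar p$.)

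The paper sidesteps exactly this issue. It makes no claim that the induced stratification of the open cone $Q\cong\St(s)$ is a cone stratification. Instead it shows the half-cone $P\subset Q$ is a stratified homotopy equivalence (using that each open simplex interior lies in a single stratum, so radial rays stay in strata) and then invokes cohomological constructibility of the Deligne sheaf — specifically Wilder's Property (P,Q) — to conclude that $I^{\bar p}H_*(Q;F)\cong\operatorname{image}\bigl(I^{\bar p}H_*(P;F)\to I^{\bar p}H_*(Q;F)\bigr)$ is finitely generated. That argument is robust with respect to the unusual stratification near the cone locus. To repair your proof you would either need to carry out such a constructibility argument, or else prove a version of the cone formula valid for cones whose cone point is absorbed into a larger stratum — neither of which is supplied by the appeal to the standard cone formula in your write-up.

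As a minor additional caution: even granting the product decomposition, it requires an argument (not just an assertion) that $L$ with the induced stratification is a stratified pseudomanifold; this is automatic for the intrinsic PL stratification but not for a general stratification of the type the paper allows.
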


\begin{lemma}
\label{n19}
The homotopy pushout (double mapping cylinder) of homotopy finite chain
complexes over $F[\pi]$ is quasi-isomorphic over $F[\pi]$ to a finite 
$F[\pi]$ chain complex. 
\end{lemma}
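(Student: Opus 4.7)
The plan is to reduce to the case where all three chain complexes are finite, then observe that the homotopy pushout of finite $F[\pi]$ chain complexes is manifestly finite. Concretely, given a homotopy pushout diagram $B \xleftarrow{f} A \xrightarrow{g} C$ of $F[\pi]$ chain complexes, I would model the double mapping cylinder as the graded $F[\pi]$-module $B \oplus A[1] \oplus C$ equipped with a differential that combines the internal differentials of $B$, $A$, $C$ with the maps $f$ and $g$ (equivalently, the shifted mapping cone of $(f,-g): A \to B \oplus C$). When $A$, $B$, $C$ are finite (free and finitely generated over $F[\pi]$ in each degree, and concentrated in finitely many degrees), this construction is visibly a finite $F[\pi]$ chain complex.

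Next I would use the hypothesis of homotopy finiteness to choose finite $F[\pi]$ chain complexes $A'$, $B'$, $C'$ together with chain homotopy equivalences $A' \simeq A$, $B' \simeq B$, $C' \simeq C$. Using homotopy inverses, lift $f$ and $g$ to chain maps $f': A' \to B'$ and $g': A' \to C'$ making the natural squares commute up to chain homotopy. These homotopies, together with the explicit mapping-cylinder model above, allow one to construct an $F[\pi]$ chain map from the homotopy pushout of $B' \leftarrow A' \to C'$ to the homotopy pushout of $B \leftarrow A \to C$.

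Finally, I would compare the two homotopy pushouts using the Mayer--Vietoris-type long exact sequence
\[
\cdots \to H_n(A) \to H_n(B) \oplus H_n(C) \to H_n(\mathrm{hocolim}) \to H_{n-1}(A) \to \cdots,
\]
arising from the defining short exact sequence $0 \to B \oplus C \to \mathrm{hocolim} \to A[1] \to 0$, together with the analogous sequence for the primed diagram. The chosen chain homotopy equivalences induce isomorphisms on the $H_n(A)$, $H_n(B) \oplus H_n(C)$, and $H_{n-1}(A)$ terms, so the five lemma forces the induced map on homotopy pushouts to be a quasi-isomorphism. Since the homotopy pushout of $B' \leftarrow A' \to C'$ is a finite $F[\pi]$ chain complex by the first step, this proves the lemma.

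The main obstacle, such as it is, is the bookkeeping involved in producing the chain-level lifts $f'$ and $g'$ and verifying that they induce a well-defined map of homotopy pushouts despite the squares commuting only up to homotopy; but this is purely formal, and can alternatively be handled by appealing to the fact that the double mapping cylinder represents the homotopy colimit in the projective model structure on chain complexes of $F[\pi]$-modules, where it is automatically invariant under termwise quasi-isomorphisms between cofibrant diagrams. No intersection-homology input is required at this stage.
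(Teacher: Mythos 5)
Your proposal is correct, and the overall strategy --- replace $A$, $B$, $C$ by finite $F[\pi]$ chain complexes, lift the structure maps so the squares commute up to chain homotopy, and compare the two homotopy pushouts via the Mayer--Vietoris long exact sequence and the five lemma --- is exactly the paper's. The interesting difference is in the model for the homotopy pushout and, consequently, in how the comparison is built. You take the double mapping cylinder to be the cone of $(f,-g)\colon A\to B\oplus C$; with that model the chain homotopies witnessing the approximate commutativity of the squares can be absorbed directly into an explicit formula for a comparison chain map. (Concretely, if $i\colon A'\to A$, $j\colon B'\to B$, $k\colon C'\to C$ are the equivalences and $h\colon A'\to B\oplus C$ is the resulting degree-one map satisfying $dh+hd=(j\oplus k)(f',-g')-(f,-g)i$, then $(y',a')\mapsto\bigl((j\oplus k)y'+ha',\,ia'\bigr)$ is a chain map between the cones, and your Mayer--Vietoris sequence shows it is a quasi-isomorphism.) This gives a single quasi-isomorphism from the finite model to the original. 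The paper instead realizes the homotopy pushout using pushouts along $A\otimes I$, where $I$ is the cellular chain complex of the unit interval; since $I$ has no room to accommodate a homotopy, the paper introduces an auxiliary ``extended'' homotopy pushout $E$ built from a doubled interval $2I$ and produces a zigzag $D\leftarrow E\to\bar D$ of quasi-isomorphisms. Your cone model is the tidier route: it avoids the doubled-interval gadget and yields a one-arrow comparison, at the modest cost of having to write down the sign-correct formula above. Either way the lemma follows; your model-category remark is also sound, though unnecessary once the explicit map is in hand.
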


\begin{proof}[Proof of Lemma \ref{n17}]
Part 1.  By the definition of $\bd$-stratified pseudomanifold (\cite[Definition
7.1]{GBF25}) $\bd X$ has an open collar neighborhood in $X$.  This implies that
the inclusion $\widetilde{X'}\to\td X$ is a stratified
homotopy equivalence, and the result follows from 
\cite[Appendix A]{GBF25}.

Part 2.
First observe that $X'$ is the union of the open stars of the vertices of $X$
that are contained in $X'$ and that there are finitely many such vertices
(because $X$ is compact).  We will also use the fact that the intersection of 
the open stars of finitely many vertices, if it is nonempty, is the open star 
of the simplex determined by these vertices.

We will prove by induction on $k$ that if $U_1,\ldots,U_k$ are open stars
of simplices contained in $X'$ and $U$ is $U_1\cup\cdots\cup U_k$ then 
$I^{\bar p}C_*(\td U;F)$ is quasi-isomorphic over $F[\pi]$ to a
finite $F[\pi]$ chain complex.
Let $V=U_1\cup \cdots\cup U_{k-1}$ and let $W=V\cap U_k$.
Let $C$ be the pushout of the diagram
\[
\tag{*}
\xymatrix{
I^{\bar p}C_*(\td W;F)
\ar[r]
\ar[d]
&
I^{\bar p}C_*(\widetilde{U_k};F)
\\
I^{\bar p}C_*(\td V;F)
}
\]
and let $D$ be its homotopy pushout.
$I^{\bar p}C_*(\td U;F)$ is chain homotopy equivalent to $C$ by Proposition
\ref{n3}.2.
The three chain complexes in diagram (*) are homotopy finite over $F[\pi]$ 
(this follows from the inductive hypothesis, Proposition \ref{f1}.1, and
Lemma \ref{n18}) so by Lemma \ref{n19} $D$ is quasi-isomorphic
over $F[\pi]$ to a finite $F[\pi]$ chain complex.
To conclude the proof we show that the quotient map $D\to C$ is a
quasi-isomorphism.  Diagram (*) gives a Mayer-Vietoris sequence
\[
\cdots\to
I^{\bar p}H_i(\td W;F)
\to
I^{\bar p}H_i(\widetilde{U_k};F) \oplus
I^{\bar p}H_i(\td V;F)
\to
H_i(D)
\to
I^{\bar p}H_{i-1}(\td W;F)
\to
\cdots
\]
There is also a Mayer-Vietoris sequence for $C$ (because the map
\[
I^{\bar p}C_i(\td W;F)
\to
I^{\bar p}C_i(\widetilde{U_k};F) \oplus
I^{\bar p}C_i(\td V;F)
\]
is a monomorphism) so the five lemma shows that $H_*(D)\to H_*(C)$ is an
isomorphism.
\end{proof}

For the proof of Lemma \ref{n18} we need a definition.
The {\it combinatorial link} of $s$, denoted $\Lk(s)$, is the union 
of the simplices of $\bSt(s)$ that do not intersect $s$.

\begin{proof}[Proof of Lemma \ref{n18}]
First recall 
(for example from \cite[Lemma 62.6]{MUNK}) that $\bSt(s)$
is equal to the join $s*\Lk(s)$.

In particular, $\bSt(s)$ is contractible, so 
the covering map $p:\td X\to X$ is 
trivial over
$\St(s)$, and hence
\[
I^{\bar p}C_*(\widetilde{\St(s)};F)
\cong
F[\pi]\otimes I^{\bar p}C_*(\St(s);F).
\]
Thus it suffices to show that $I^{\bar p}C_*(\St(s);F)$ is homotopy finite
over $F$.  But (using the fact that $F$ is a field) $I^{\bar
p}C_*(\St(s);F)$ is chain homotopy equivalent to $I^{\bar
p}H_*(\St(s);F)$, so it suffices to show that the latter is finitely
generated.

Now $s=\hat s*\bd s$, where
$\hat s$ is the barycenter of $s$, and so
$\bSt(s)=\hat s*\bd s*\Lk(s)$. This is homeomorphic to 
the cone on $\bd s*\Lk(s)$, and the homeomorphism takes $\St(s)$
to the open cone
\[
([0,1)\times (\bd s*\Lk(s))/(0\times x\sim 0\times
y)
\]
which we denote by $Q$.  We give $Q$ the stratification determined by the
homeomorphism. Each subspace $(0,1)\times z$ of Q is taken by the inverse
homeomorphism to the interior of a simplex of $X$, and the interior of each
simplex of $X$ is contained in a single stratum, so each subspace $(0,1)\times
z$ is contained in a single stratum of $Q$.  It follows that the subspace
\[
([0,1/2)\times (\bd s*\Lk(s))/(0\times x\sim 0\times y),
\]
which we denote by $P$, is stratified homotopy equivalent to
$Q$ (as defined in \cite[Appendix A]{GBF25}).
Next we recall that $I^{\bar p}H_*$ of an open set in $X'$ is the
hypercohomology of the Deligne sheaf (for general perversities this is
\cite[Theorem 3.6]{GBF23}) and that the Deligne sheaf is cohomologically
constructible (\cite[Proposition 4.1]{GBF23}), which in particular means that
it satisfies Wilder's Property (P,Q) (\cite[page 69]{Bo}).  In our situation
this says that the image of the map $I^{\bar p}H_*(P;F)\to I^{\bar p}H_*(Q;F)$
is finitely generated.  But this map is an isomorphism by 
\cite[Appendix A]{GBF25},
so $I^{\bar p}H_*(Q;F)$ is finitely generated as required.
\end{proof}

\begin{proof}[Proof of Lemma \ref{n19}]
Let 
\[
\tag{*}
\xymatrix{
A
\ar[r]^-g
\ar[d]_f
&
C
\\
B
&
}
\]
be a diagram of homotopy finite chain complexes over $F[\pi]$ and $F[\pi]$ chain
maps.  Recall that the homotopy pushout of diagram $(*)$ is defined as
follows.
Let $I$ denote the cellular chain complex of the unit interval, that is, the 
$F$ chain complex with two generators $a$ and
$b$ in dimension 0, one generator $c$ in dimension 1, and differential
$\bd c=b-a$.   Let $F$ be the chain complex consisting of $F$ in
dimension 0, and let $\alpha,\beta:F\to I$ be the maps which take 1 to $a$ and
$b$ respectively.
Define $B'$ by the pushout diagram
\[
\xymatrix{
A
\ar[d]_f
\ar[r]^-{A\otimes \beta}
&
A\otimes I
\ar[d]
\\
B
\ar[r]
&
B'
}
\]
and similarly for $C'$.
Then the homotopy pushout of $(*)$, which we will denote by $D$,  is defined by
the pushout diagram
\[
\xymatrix{
A
\ar[r]
\ar[d]
&
C'
\ar[d]
\\
B'
\ar[r]
&
D,
}
\]
where the upper horizontal and leftmost vertical arrows are induced by
$A\otimes \alpha$.

Next let $i:A\to \bar{A}$, $j:B\to \bar{B}$, $k:C\to \bar{C}$ be chain homotopy equivalences 
over $F[\pi]$ with $\bar{A}$, $\bar{B}$, $\bar{C}$ finite $F[\pi]$ chain complexes.
Then there are maps $\bar{f}:\bar{A}\to \bar{B}$ and 
and $\bar{g}:\bar{A}\to \bar{C}$ making the diagram
\[
\tag{**}
\xymatrix{
B
\ar[d]_j
&
A
\ar[l]_-f
\ar[r]^-g
\ar[d]_i
&
C
\ar[d]_k
\\
\bar{B}
&
\bar{A}
\ar[l]_-{\bar{f}}
\ar[r]^-{\bar{g}}
&
\bar{C}
}
\]
commute up to chain homotopy.  The homotopy pushout $\bar{D}$ of the second 
row is given by a pushout diagram 
\[
\xymatrix{
\bar{A}
\ar[r]
\ar[d]
&
\bar{C}'
\ar[d]
\\
\bar{B}'
\ar[r]
&
\bar{D}.
}
\]

It is easy to check that $\bar{D}$ is a finite 
$F[\pi]$ chain complex.  
To compare $D$ with $\bar{D}$ we introduce an ``extended'' version of $D$.
Define a chain complex $2I$ by the pushout diagram
\[
\tag{***}
\xymatrix{
F
\ar[r]^-{\alpha}
\ar[d]_{\beta}
&
I
\ar[d]^{\delta}
\\
I
\ar[r]^-{\gamma}
&
2I.
}
\]
Let $\zeta$ (resp., $\eta$) be the composite
$F \xrightarrow{\alpha} I\xrightarrow{\gamma} 2I$
(resp., $F \xrightarrow{\beta} I\xrightarrow{\delta} 2I$).
Replacing $I$ by $2I$, $\alpha$ by $\zeta$, and $\beta$ by $\eta$ in the
construction of $D$ gives a pushout diagram
\[
\xymatrix{
A
\ar[r]
\ar[d]
&
C''
\ar[d]
\\
B''
\ar[r]
&
E.
}
\]

Next we construct a quasi-isomorphism $E\to D$.
In diagram (***), write $I_1\subset 2I$ for the image of $\gamma$ and $I_2$ for
the image of $\delta$.  Also let $\epsilon:I\to F$ be the chain map which takes
$a$ and $b$ to 1.
Define a map $\theta:2I\to I$ by letting $\theta$ be
$\gamma^{-1}$ on $I_1$ and $\beta\circ\epsilon\circ\delta^{-1}$ on $I_2$.
$\theta$ induces maps $B''\to B'$ and $C''\to C'$ and hence a map $E\to D$.
Applying the five lemma to the Mayer-Vietoris sequences of $E$ and $D$ shows
that the map $E\to D$ is a quasi-isomorphism.

Finally, we construct a quasi-isomorphism $E\to \bar{D}$.  In diagram (**), let
$H:A\otimes I\to\bar{B}$ be the chain homotopy from $\bar{f}\circ i$ to 
$j\circ f$.  Define a map $\kappa:B''\to \bar{B}'$ to be $A\otimes 
\gamma^{-1}$ on
$A\otimes I_1$, $H\circ (A\otimes \delta^{-1})$ on $A\otimes I_2$, and $j$ on
$B$.  Similarly, define a map $\lambda:C''\to \bar{C}$.  Then $\kappa$ and
$\lambda$ give a map $E\to \bar{D}$, and applying the five lemma to the
Mayer-Vietoris sequences of $E$ and $\bar{D}$ shows that this is a
quasi-isomorphism.
\end{proof}

\subsection{Proof of Proposition \ref{n1}}
\label{n14}

\red{We continue to use the notation from the beginning of Section \ref{S: 
6.1}.}

We need a lemma, whose proof we defer for a moment.

\begin{lemma}
\label{n2}
Let $U_1,\ldots,U_m\in\U$ and let $\xi_i\in I^{\bar
p}C_*(U_i;M)$ for $1\leq i\leq m$
with
\[
\sum \xi_i=0
\]
in $I^{\bar p}C_*(X,A;M)$.  Then for $2\leq i\leq m$ there exist $\eta_i\in 
I^{\bar p}C_*(U_i\cap U_1;M)$ with
\[
\xi_1+\sum_{i=2}^m \eta_i=0
\]
in $I^{\bar p}C_*(X,A;M)$.
\end{lemma}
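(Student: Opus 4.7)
The plan is to build each $\eta_i$ directly by restriction, extracting from $\xi_1$ the part that can be absorbed into $U_1 \cap U_i$ for each $i \geq 2$. First I would work at the level of basis simplices: pick a basis of $I^{\bar p}C_*(X; M)$ consisting of $\bar p$-allowable singular simplices not supported entirely in the singular set (in accordance with the stratified-coefficient convention), and write
\[
\xi_i = \sum_\sigma a_{i,\sigma}\, \sigma, \qquad 1 \leq i \leq m,
\]
where $a_{i,\sigma} = 0$ unless the image of $\sigma$ lies in $U_i$. Decompose $\xi_1 = \xi_1^A + \xi_1'$, with $\xi_1^A$ collecting those basis simplices of $\xi_1$ whose image lies in $A$ and $\xi_1'$ collecting the rest.

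The key observation is that the hypothesis $\sum_i \xi_i = 0$ in $I^{\bar p}C_*(X, A; M)$ is precisely the condition $\sum_{i=1}^m a_{i,\sigma} = 0$ for every basis simplex $\sigma$ whose image is not contained in $A$. Motivated by this, for $2 \leq i \leq m$ I would define
\[
\eta_i \;=\; \sum_{\sigma \in \xi_1'} a_{i,\sigma}\, \sigma,
\]
the sum running over basis simplices of $\xi_1'$. Each nonzero term has $a_{i,\sigma} \neq 0$ (so the image of $\sigma$ lies in $U_i$) while $\sigma$ appears in $\xi_1'$ (so its image lies in $U_1$ but not in $A$); hence $\eta_i \in I^{\bar p}C_*(U_1 \cap U_i; M)$. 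Applying the cancellation identity above yields
\[
\sum_{i=2}^m \eta_i \;=\; \sum_{\sigma \in \xi_1'} \Bigl(\sum_{i=2}^m a_{i,\sigma}\Bigr)\sigma \;=\; -\sum_{\sigma \in \xi_1'} a_{1,\sigma}\, \sigma \;=\; -\xi_1',
\]
and therefore $\xi_1 + \sum_{i=2}^m \eta_i = \xi_1^A$, which lies in $I^{\bar p}C_*(A; M)$ and hence represents $0$ in $I^{\bar p}C_*(X, A; M)$.

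The main point requiring care will be compatibility with the stratified-coefficient convention, namely verifying that the decomposition $\xi_1 = \xi_1^A + \xi_1'$ and the assembly of the $\eta_i$ are insensitive to the choice of basis representatives. This is harmless because simplices supported entirely in the singular set are quotiented to zero uniformly across every $I^{\bar p}C_*(U; M)$, so all the formulas descend to the stratified-coefficient quotient without alteration.
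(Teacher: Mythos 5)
There is a genuine gap, and it is exactly the one the paper itself flags. Recall that a chain $\eta$ belongs to $I^{\bar p}C_*(U_1\cap U_i;M)$ only if \emph{both} $\eta$ \emph{and} $\partial\eta$ are $\bar p$-allowable (and supported in $U_1\cap U_i$). Your $\eta_i = \sum_{\sigma\in\xi_1'} a_{i,\sigma}\,\sigma$ is built from allowable simplices, so $\eta_i$ itself is allowable; but you have not checked that $\partial\eta_i$ is allowable, and in general it is not. A simplex $\sigma$ belonging to $\xi_1$ can have non-allowable faces: these cancel inside the full chain $\xi_1$ (that is why $\xi_1$ is an intersection chain), but once you pass to a subchain consisting of only some of the simplices of $\xi_1$, weighted by new coefficients $a_{i,\sigma}$ coming from $\xi_i$, there is no reason for those bad faces to cancel in $\partial\eta_i$. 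The claim ``hence $\eta_i \in I^{\bar p}C_*(U_1\cap U_i;M)$'' is therefore unjustified.

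This is precisely what the published proof is designed to circumvent. It first forms chains $\theta_i$ that are essentially your $\eta_i$, then explicitly remarks that $\theta_i$ will not be an intersection chain in general ``because its boundary can contain non-allowable simplices that cancel out in $\xi_i$.'' The remedy is a correction term: for each singular simplex $\sigma$ that belongs to $\xi_i$, meets $U_1$, but does not belong to $\xi_1$, a subdivided chain $\bar\sigma$ supported in $|\sigma|\cap U_1$ is constructed (using the iterated barycentric subdivision relative to a subcomplex, as in the proof of \cite[Proposition 2.9]{GBF10}), and $\eta_i$ is defined as $\theta_i + \sum_{\sigma\in\mathfrak B_i} c_{\xi_i}(\sigma)\bar\sigma$. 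The bulk of the proof is then devoted to verifying that these corrected $\eta_i$ really do have allowable boundary (equations \eqref{n11}--\eqref{n13}) while still satisfying $\xi_1 + \sum\eta_i = 0$. Your algebraic identity $\sum_{i\geq 2}\eta_i = -\xi_1'$ is correct as a computation with singular chains, and your handling of the $A$-part parallels the paper's reduction of the relative case to the absolute one; the missing ingredient is the subdivision mechanism that makes the $\eta_i$ legitimate intersection chains.
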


\begin{proof}[Proof of Proposition \ref{n1}.]
Let $K$ be the kernel 
of the canonical epimorphism
\[
\bigoplus_{U\in\U} I^{\bar p}C_*(U,U\cap A;M)
\to
I^{\bar p}_\U C_*(X,A;M)
\]
and let $L$ be the kernel of the canonical epimorphism
\[
\bigoplus_{U\in\U} I^{\bar p}C_*(U,U\cap A;M)
\to
\dlim_{V\in \cC}\, I^{\bar p}C_* (V,V\cap A;M).
\]
For a chain $\xi\in I^{\bar p}C_*(U;M)$, let $[\xi]$ denote its image in 
$I^{\bar p}C_*(U,U\cap A;M)$.
$K$ is generated by tuples
\[
([\xi_1],\ldots,[\xi_m])\in \bigoplus_{i=1}^m I^{\bar p}C_*(U_i,U_i\cap 
A;M)
\]
with $\sum \xi_i=0$ in $I^{\bar p}C_*(X,A;M)$, as $(U_1,\ldots,U_m)$ ranges
over all $m$-tuples in $\U$. 
$L$ is generated by pairs
\[
([\xi],[-\xi])\in I^{\bar p}C_*(U,U\cap A;M)\oplus I^{\bar p}C_*(U',U'\cap
A;M)  
\]
with $\xi\in I^{\bar p}C_*(U\cap U';M)$, as $(U,U')$
ranges over all pairs in $\U$.  It's clear that $L\subset K$ and it suffices to
show that each of the generating tuples for $K$ is in $L$.  So let
$([\xi_1],\ldots,[\xi_m])$ be such a tuple.  We assume inductively that all 
shorter
such tuples are in $L$.
Lemma \ref{n2} gives an equation
\begin{multline*}
([\xi_1],\ldots,[\xi_m])
=
([-\eta_2],[\eta_2],0,\ldots,0)
+
([-\eta_3],0,[\eta_3],0,\ldots,0)
+\cdots 
+([-\eta_m],0,\ldots,0,[\eta_m])
\\
+(0,[\xi_2-\eta_2],\ldots,[\xi_m-\eta_m]).
\end{multline*}
The last summand on the right is in $L$ by the inductive hypothesis, and the
remaining summands are obviously in $L$.  
\end{proof}

\begin{proof}[Proof of Lemma \ref{n2}.]
We begin with the case $A=\emptyset$.

For a chain $\xi$ and a singular simplex $\sigma$ with the same dimension as
$\xi$, we write 
\[
c_\xi(\sigma)
\]
for the coefficient of $\sigma$ in $\xi$.  
We say that $\sigma$ {\it belongs} to $\xi$ if $c_\xi(\sigma)\neq 0$.

Let $U_i$ and $\xi_i$, $1\leq i\leq m$, be as in the lemma.  For 
$2\leq i\leq m$, let ${\mathfrak A}_i$ be the set of singular simplices which belong to 
both $\xi_i$ and $\xi_1$, and let
\[
\theta_i=\sum_{\sigma\in {\mathfrak A}_i}
c_{\xi_i}(\sigma)\sigma.
\]
The equation $\sum_{i=1}^m \xi_i=0$ implies
\begin{equation}
\label{n4}
\xi_1+\sum_{i=2}^m\theta_i=0,
\end{equation}
which might suggest we could take $\eta_i$ to be $\theta_i$,
but $\theta_i$ will not be an intersection chain in 
general because its boundary can contain non-allowable simplices that cancel
out in $\xi_i$.

For $2\leq i\leq m$, let ${\mathfrak B}_i$ be the set of singular simplices which 
belong to $\xi_i$ and intersect $U_1$ but do not belong to $\xi_1$.
Let ${\mathfrak B}=\cup_{i=2}^m {\mathfrak B}_i$.
The equation $\sum_{i=1}^m \xi_i=0$ implies
\begin{equation}
\label{n5}
\sum_{i=2}^m c_{\xi_i}(\sigma)=0
\end{equation}
for each $\sigma\in {\mathfrak B}$.

The strategy of the rest of the proof is to replace each $\sigma$ in
${\mathfrak B}$ by a
chain $\bar\sigma$, in such a way that for $2\leq i\leq m$

\renewcommand{\theenumi}{\Roman{enumi}}
\renewcommand{\labelenumi}{(\theenumi)}

\begin{enumerate}
\item
\label{n7}
the support $|\bar\sigma|$ is contained in $|\sigma|\cap U_1$, and 
\item
\label{n8}
the chain
$
\theta_i
+
\sum_{\sigma\in {\mathfrak B}_i}
c_{\xi_i}(\sigma)\bar\sigma
$
is allowable. 
\end{enumerate}
We can then let $\eta_i$ be $\theta_i
+
\sum_{\sigma\in {\mathfrak B}_i}
c_{\xi_i}(\sigma)\bar\sigma$; the equation 
$\xi_1+\sum_{i=2}^m\eta_i=0$ will follow from equations \eqref{n4} and 
\eqref{n5}.

We will construct the chains $\bar\sigma$ by using the subdivision procedure in
the proof of \cite[Proposition 2.9]{GBF10} (with the ordered cover $U_1,X$);
for the convenience of the reader we give the details.

First we need some notation.  Suppose we are given 
\begin{itemize}
\item
a singular simplex $\tau:\Delta^j\to X$, 
\item
a simplicial complex 
$K$ which is a subdivision of $\Delta^j$, and
\item
an ordering of the vertices of $K$ which is a total ordering on the 
vertices of each simplex. 
\end{itemize}
For each $j$-dimensional simplex $s$ of $K$ the total ordering of the 
vertices of $s$ determines an affine isomorphism 
\[
\i_s:\Delta^j\to s.
\]
Let $\epsilon_s$ be 1 if the total ordering of the vertices of $s$ agrees with
the orientation inherited from $\Delta^j$ and $-1$ otherwise.  Let 
\begin{equation}
\label{n10}
\i_K=\sum\epsilon_s\,\i_s,
\end{equation}
where the sum is taken over all $j$-dimensional simplices of $K$.  Then $\i_K$
is a singular chain of $\Delta^j$.  The chain $\tau_*(\i_K)$ is the 
subdivision of $\tau$ determined by the given data.

Now suppose in addition that $\tau$ is allowable.  Then \cite[Lemma 
2.6]{GBF10} says that for every $j$-dimensional simplex $s$ of $K$ 
the singular simplex $\tau\circ\i_s$ is allowable.  Also, if $t$ is a 
$(j-1)$-dimensional simplex of $K$ then a straightforward argument (which is written
out on page 1993 of \cite{GBF10}) shows that $\tau\circ\i_t$
is allowable except perhaps when $t$ contains a simplex $u$ 
which is contained in the $\dim(u)$-skeleton of $\Delta^j$.  We will call a
simplex $u$ of $K$ which is contained in the $\dim(u)$-skeleton of $\Delta^j$ 
{\it awkward} (with respect to $\tau$).  

Let $k$ denote the dimension of the chains $\xi_i$.  For $0\leq j\leq k$, let
${\mathfrak B}^j$ denote the set of singular simplices of dimension $j$ which 
are faces of singular simplices in ${\mathfrak B}$ (in particular 
${\mathfrak B}^k={\mathfrak B}$).  By 
induction on $j$, we will construct for each $\tau\in {\mathfrak B}^j$
\begin{itemize}
\item
a subdivision $K_\tau$ of $\Delta^j$, and
\item
a partial ordering of the vertices of $K_\tau$ which restricts to a total
ordering on the vertices of each simplex, 
\end{itemize}
with the following properties.\footnote{Property \eqref{n6} is a slight 
modification of the procedure in \cite{GBF10}.}
\renewcommand{\theenumi}{\roman{enumi}}
\renewcommand{\labelenumi}{(\theenumi)}
\begin{enumerate}
\item
\label{n6}
If $|\tau|\subset U_1$ then $K_\tau=\Delta^j$.
\item
Under the identification of the $l$-th face of $\Delta^j$ with $\Delta^{j-1}$,
the subdivision of the $l$-th face agrees with $K_{\bd_l\tau}$.
\item
\label{n9}
If $u$ is an awkward simplex of $K_\tau$ which
is contained in $\tau^{-1}(U_1)$, then any simplex of $K_\tau$ containing $u$ 
is contained in $\tau^{-1}(U_1)$.
\end{enumerate}
For $j=0$, $K_\tau=\Delta^0$.  Suppose the construction has
been accomplished for all dimensions $<j$ and let $\tau\in {\mathfrak B}^j$ with $|\tau|$
not contained in $U_1$.  The 
subdivisions associated to the faces of $\tau$ give a simplicial complex $K_0$
which is a subdivision of the boundary of $\Delta^j$.  
Let $\Delta'$ be the cone on $K_0$.
Then $K_0$ is a subcomplex of $\Delta'$ so we can apply
barycentric subdivision holding $K_0$ fixed (see \cite[page 89]{MUNK} for the
definition) until Property \eqref{n9} is satisfied (see the proof of 
\cite[Lemma 16.3]{MUNK}).  We order the vertices at each stage of the
subdivision process by letting each new vertex be greater than all the 
existing vertices adjacent to it.

Now for each $\sigma\in {\mathfrak B}^k$ we let
\[
\bar{\sigma}=\sum \epsilon_s\, \sigma\circ\i_s
\]
where the sum is over all simplices $s$ of $K_\sigma$ that are contained in
$\sigma^{-1}(U_1)$.  Also, for each $\tau\in {\mathfrak B}^{k-1}$, we let
\[
\bar{\tau}=\sum \epsilon_t\, \tau\circ\i_t
\]
where the sum is over all simplices $t$ of $K_\tau$ that are contained in
$\tau^{-1}(U_1)$.

We need to show that the $\bar\sigma$ satisfy Properties \eqref{n7} and
\eqref{n8} above.
Property \eqref{n7} is clearly satisfied.
As a first step toward Property \eqref{n8}, we calculate 
$\partial\bar{\sigma}$ modulo allowable singular simplices.  Fix a 
$\sigma\in {\mathfrak B}^k$ and let 
\[
{\mathfrak j}=\sum \epsilon_s \i_s,
\]
where the sum is over all simplices of $K_\sigma$ that  are contained in
$\sigma^{-1}(U_1)$; then $\bar{\sigma}=\sigma_*{\mathfrak j}$ and
$\partial\bar{\sigma}=\sigma_*(\bd\mathfrak j)$.  
Suppose that $t$ is a $(k-1)$-simplex belonging to $\bd{\mathfrak j}$ such 
that $\sigma\circ\i_t$ is non-allowable.  Then $t$ must contain an awkward 
simplex of $K_\sigma$, so Property \eqref{n9} implies that the coefficient of 
$\i_t$ in $\bd\mathfrak j$ is the same as its coefficient in 
$\bd\i_{K_\sigma}$ (see equation \eqref{n10}). If $t$ is not 
contained in $\bd\Delta^k$ then this coefficient is 0.  If $t$ is contained in
the $l$-the face of $\Delta^k$ then (identifying this face with
$\Delta^{k-1}$) this coefficient is $(-1)^l \epsilon_t$.
It follows that
\begin{equation}
\label{n11}
\bd\bar\sigma\equiv\sum_{\tau\in {\mathfrak B}^{k-1}} c_{\partial\sigma}(\tau) \bar{\tau}
\end{equation}
modulo allowable singular simplices.

Now we can verify Property \eqref{n8}.  Let $\eta_i$ denote $\theta_i + 
\sum_{\sigma\in {\mathfrak B}_i} c_{\xi_i}(\sigma)\bar\sigma $.  All singular simplices
that belong to $\eta_i$ are allowable by \cite[Lemma 2.6]{GBF10}, so it only
remains to check that the singular simplices that belong to $\bd\eta_i$ are 
allowable.  First note that if $\tau$ is non-allowable and belongs to
$\bd\theta_i$ then $\tau$ is an element of ${\mathfrak B}^{k-1}$ (because
$\bd\theta_i\subset U_1$ and $\xi_i$ is
allowable), and we have $\bar{\tau}=\tau$ by Property \eqref{n6}.  This implies
that, modulo allowable singular simplices, we have
\begin{equation}
\label{n12}
\bd\theta_i\equiv \sum_{\tau\in {\mathfrak B}^{k-1}}
\,
c_{\bd\theta_i}(\tau)
\bar\tau.
\end{equation}
Combining equations \eqref{n11} and \eqref{n12} gives
\begin{equation}
\label{n13}
\bd\eta_i\equiv
\sum_{\tau\in {\mathfrak B}^{k-1}}
\,
\left[
c_{\bd\theta_i}(\tau)
+
\sum_{\sigma\in {\mathfrak B}_i} 
c_{\xi_i}(\sigma)
c_{\bd\sigma}(\tau)
\right]
\bar{\tau}.
\end{equation}
If $\tau$ is allowable then all singular simplices belonging to $\bar{\tau}$
are allowable, by \cite[Lemma 2.6]{GBF10}.  If
$\tau$ is not allowable and $\bar{\tau}\neq 0$ then $|\tau|$ must intersect
$U_1$, which implies that the expression in brackets in equation \eqref{n13} is
equal to the coefficient of $\tau$ in $\bd\xi_i$, which is 0 since $\xi_i$ is
allowable.  Thus all singular simplices belonging to $\bd\eta_i$ are allowable,
as required.

This completes the proof of Lemma \ref{n2} for the case $A=\emptyset$.  For the
general case, we are given $\xi_i\in I^{\bar p}C_*(U_i;M)$ for $1\leq i\leq m$
with
\[
\sum \xi_i\in I^{\bar p}C_*(A;M).
\]
Let $U_{m+1}=A$ \red{(recall that $A$ is an open set)} and 
$\xi_{m+1}=-\sum_{i=1}^m \xi_i$.   Applying the case 
already proved to the $(m+1)$-tuple $(\xi_1,\ldots,\xi_{m+1})$, we obtain 
$\eta_i\in I^{\bar
p}C_*(U_i\cap U_1;M)$ for $2\leq i\leq m+1$ with 
\[
\xi_1+\sum_{i=2}^{m+1} \eta_i=0, 
\]
and from this it follows that
\[
\xi_1+\sum_{i=2}^m \eta_i\in I^{\bar p}C_*(A;M)
\]
as required.
\end{proof}

\bibliographystyle{amsplain}
\bibliography{../../bib}

Several diagrams in this paper were typeset using the \TeX\, commutative
diagrams package by Paul Taylor. 
 
\end{document}